\theoremstyle{plain}
\newtheorem*{theorem}{Theorem}
\newtheorem*{proposition}{Proposition}
\newtheorem*{lemma}{Lemma}
\newtheorem*{corollary}{Corollary}
\theoremstyle{definition}
\newtheorem*{definition}{Definition}
\newtheorem*{example}{Example}
\theoremstyle{remark}
\newtheorem*{remark}{Remark}
\numberwithin{equation}{section}
\renewcommand{\hat}[1]{\widehat{#1}}
\newcommand{\cat}[1]{\mathscr{#1}}
\renewcommand{\vec}[1]{\bm{#1}}
\renewcommand{\tilde}[1]{\widetilde{#1}}
\renewcommand{\bar}[1]{\overline{#1}}
\newcommand{\bC}{\mathbb{C}}
\newcommand{\bE}{\mathbb{E}}
\newcommand{\bF}{\mathbb{F}}
\newcommand{\bk}{\mathbbm{k}}
\newcommand{\bK}{\mathbb{K}}
\newcommand{\bfK}{\mathbf{K}}
\newcommand{\bL}{\mathbb{L}}
\newcommand{\bM}{\mathbb{M}}
\newcommand{\bP}{\mathbb{P}}
\newcommand{\bQ}{\mathbb{Q}}
\newcommand{\bZ}{\mathbb{Z}}
\newcommand{\cA}{\mathcal{A}}
\newcommand{\cE}{\mathcal{E}}
\newcommand{\cF}{\mathcal{F}}
\newcommand{\cG}{\mathcal{G}}
\newcommand{\cK}{\mathcal{K}}
\newcommand{\cL}{\mathcal{L}}
\newcommand{\cN}{\mathcal{N}}
\newcommand{\cO}{\mathcal{O}}
\newcommand{\cV}{\mathcal{V}}
\newcommand{\fF}{\mathfrak{F}}
\newcommand{\fM}{\mathfrak{M}}
\newcommand{\fN}{\mathfrak{N}}
\newcommand{\fX}{\mathfrak{X}}
\newcommand{\fY}{\mathfrak{Y}}
\newcommand{\sA}{\mathsf{A}}
\newcommand{\se}{\mathsf{e}}
\newcommand{\sG}{\mathsf{G}}
\newcommand{\sT}{\mathsf{T}}
\newcommand{\sZ}{\mathsf{Z}}
\newcommand{\sz}{\mathsf{z}}
\newcommand{\pl}{\mathrm{pl}}
\newcommand{\sst}{\mathrm{sst}}
\newcommand{\st}{\mathrm{st}}
\newcommand{\vir}{\mathrm{vir}}
\newcommand{\loc}{\mathrm{loc}}
\newcommand{\fix}{\mathsf{fix}}
\newcommand{\di}{\partial}
\newcommand{\vac}{\mathbf{1}}
\DeclareMathOperator{\Fr}{Fr}
\DeclareMathOperator{\cFr}{\mathcal{F}{\it r}}
\DeclareMathOperator{\fr}{fr}
\DeclareMathOperator{\End}{End}
\DeclareMathOperator{\diag}{diag}
\DeclareMathOperator{\GL}{\mathsf{GL}}
\DeclareMathOperator{\PGL}{\mathsf{PGL}}
\DeclareMathOperator{\rank}{rank}
\DeclareMathOperator{\ad}{ad}
\DeclareMathOperator{\ch}{ch}
\DeclareMathOperator{\Res}{Res}
\DeclareMathOperator{\id}{id}
\DeclareMathOperator{\cone}{cone}
\DeclareMathOperator{\Hom}{Hom}
\DeclareMathOperator{\Pic}{Pic}
\DeclareMathOperator{\tot}{tot}
\DeclareMathOperator{\Ext}{Ext}
\DeclareMathOperator{\im}{im}
\DeclareMathOperator{\Char}{\mathsf{char}}
\DeclareMathOperator{\supp}{supp}
\DeclareMathOperator{\Frac}{Frac}
\DeclareMathOperator{\sing}{sing}
\DeclareMathOperator{\reg}{reg}
\DeclareMathOperator{\Spec}{Spec}
\DeclarePairedDelimiter{\inner}{\langle}{\rangle}
\DeclarePairedDelimiterX{\lseries}[1]{(}{)}{\mkern-2mu\delimsize(#1\delimsize)\mkern-2mu}
\DeclarePairedDelimiterX{\pseries}[1]{[}{]}{\mkern-2mu\delimsize[#1\delimsize]\mkern-2mu}
\DeclarePairedDelimiterX{\NO}[1]{{\vcentcolon}}{{\vcentcolon}\,}{{#1}}
\tikzset{%
  vertex/.style={shape=circle,fill=black,minimum size=6pt,inner sep=0},
  framing/.style={shape=rectangle,fill=black,minimum size=6pt,inner sep=0},
  baseline={([yshift=-0.8ex]current bounding box.center)}
}
\title{Equivariant K-theoretic enumerative invariants and wall-crossing formulae in abelian categories}
\author{Henry Liu}
\date{\today}
\begin{document}

\maketitle

\begin{abstract}
  We provide a general framework for wall-crossing of equivariant
  K-theoretic enumerative invariants of appropriate moduli stacks
  $\fM$, by lifting Joyce's homological universal wall-crossing
  \cite{Joyce2021} to K-theory and to include equivariance. The
  primary new result is that appropriate K-homology groups of $\fM$
  are equivariant multiplicative vertex algebras.
\end{abstract}

\renewcommand{\baselinestretch}{1}\normalsize
\tableofcontents
\renewcommand{\baselinestretch}{1.25}\normalsize

\section{Introduction}

Given a $\bC$-linear abelian category $\cA$, enumerative geometry may
be phrased very generally as the problem of counting objects in
$\cat{A}$. In this paper, we primarily imagine $\cA$ to be the
category of coherent sheaves on a smooth projective $\bC$-scheme $X$
of dimension $\le 2$. To make a connection with geometry, assume $\cA$
has an associated algebraic moduli stack $\fM = \bigsqcup_\alpha
\fM_\alpha$. For a (weak) stability condition $\tau$ on $\cA$, the
$\tau$-stable (resp. $\tau$-semistable) objects form substacks (resp.
open substacks) $\fM_\alpha^{\st}(\tau) \subset
\fM_\alpha^{\sst}(\tau) \subset \fM_\alpha^{\pl}$ of a
$\bC^\times$-rigidifcation of $\fM_\alpha$, for each $\alpha$. If
$\alpha$ has no strictly $\tau$-semistable objects, i.e.
$\fM^\st_\alpha(\tau) = \fM^{\sst}_\alpha(\tau)$, and
$\fM_\alpha^\pl$ has a perfect obstruction theory, then following
\cite{Behrend1997} there is a {\it virtual fundamental class}
\begin{equation} \label{eq:chow-invariants}
  [\fM_\alpha^{\sst}(\tau)]^{\vir} \in A_*(\fM_\alpha^{\sst}(\tau))_\bQ.
\end{equation}
Pairing it with tautological classes and integrating produces
important enumerative quantities invariant under deformations of $X$,
though the virtual class itself is more fundamental than any set of
such invariants.

This setup already encompasses entire worlds within modern enumerative
geometry, particularly Donaldson-type theories; see
\cite{Mochizuki2009, Maulik2006, Cao2022} for a sample. Notably,
however, it excludes Gromov--Witten theory, whose underlying category
of stable maps is not abelian.

Wall-crossing studies relationships between the classes
\eqref{eq:chow-invariants} for different $\tau$; the name comes from
the simple case where the space of all stability conditions is
separated by codimension-$1$ {\it walls} into chambers in which the
stable=semistable locus and its virtual class are constant. Many
interesting correspondences between different enumerative setups, e.g.
DT/PT \cite{Toda2010}, can be viewed as wall-crossing formulae. To
study wall-crossing, Joyce \cite{Joyce2021} constructs classes
\begin{equation} \label{eq:homology-semistable-invariants}
  [\fM^{\sst}_\alpha(\tau)]^{\vir} \in H_*(\fM^\pl_\alpha)_\bQ
\end{equation}
for {\it all} $\alpha$ and $\tau$ inductively, using auxiliary
categories $\tilde{\cat{A}}^{\Fr}$ of pairs in the style of
Joyce--Song \cite{Joyce2012}. These classes agree with
\eqref{eq:chow-invariants} whenever comparable. There are then
universal, though somehat unwieldy, formulae describing their behavior
under variation of $\tau$, involving a vertex algebra and a Lie
algebra structure on the homology groups of $\fM$ and $\fM^\pl$
respectively.

Importantly, the {\it homology} groups where these classes live have
very different functoriality properties than, say, Chow groups. While
Borel--Moore-type homology theories have only proper pushforwards and
flat pullbacks, Joyce's construction crucially relies on the arbitrary
pushforwards present in homology.

The goal of this paper is to port Joyce's constructions into (a dual
of) {\it equivariant algebraic K-theory}. K-theory is a multiplicative
refinement of ordinary cohomology, and equivariance allows for
powerful tools such as localization with respect to a torus $\sT$
acting on $\fM$, so e.g. $X$ may now be quasi-projective; see
\cite{Okounkov2017} for a salient introduction. To accomplish this
goal, we introduce two new ingredients which may also be of
independent interest.
\begin{itemize}
\item We provide axioms defining a suitable but rather abstract notion
  of a {\it commutative operational equivariant K-homology theory}
  $\bfK^\sT_\circ(-)$ for algebraic stacks $\fM$ with $\sT$-action;
  such theories behave like the dual of the K-group $K^\circ_\sT(\fM)$
  of $\sT$-equivariant perfect complexes on $\fM$, with a finiteness
  condition, and serves as K-theoretic analogues of $H_*(\fM)$.
  Besides having arbitrary pushforwards, they are also natural homes
  for ``universal'' K-theoretic enumerative invariants, namely
  functions $\sZ_\alpha(\tau)$ which take a perfect complex on $\fM$
  (e.g. a tautological bundle), tensors it with the virtual structure
  sheaf of $\fM^{\sst}_\alpha(\tau)$ whenever it exists, and pushes
  forward to the base. For practical use, especially in wall-crossing,
  we provide a ``concrete'' instantiation $K_\circ^\sT(-)$ such a
  K-homology theory.

\item We define and put the structure of an {\it equivariant
  multiplicative vertex algebra} on $\bfK^\sT_\circ(\fM)$, and
  generalize Borcherd's construction \cite[\S 4]{Borcherds1986} to
  induce a Lie bracket on a certain quotient $\bfK^\sT_\circ(\fM)^\pl$,
  related but not necessarily equal to $\bfK^\sT_\circ(\fM^\pl)$. This
  is our analogue of the ordinary vertex algebra structure on
  $H_*(\fM)_\bQ$ and induced Lie bracket on $H_*(\fM^\pl)_\bQ$
  \cite[\S 4.2]{Joyce2021}. Equivariance and multiplicativity means
  that the vertex operation now has poles over the torus $\sT$. In
  particular, the OPE of two fields $A(z)$ and $B(w)$ has poles at
  $z/w = t_i$ for a finite number of equivariant weights $t_i$ which
  may depend on $A$ and $B$, in contrast to the ordinary case of poles
  only at $z - w = 0$.
\end{itemize}
With these two ingredients in hand, Joyce's construction of
\eqref{eq:homology-semistable-invariants} proceeds almost verbatim to
give K-theoretic classes
\[ \sZ_\alpha(\tau) \in K^\sT_\circ(\fM^\pl_\alpha)_{\loc,\bQ} \]
for all $\alpha$ and $\tau$. These classes satisfy analogues of all
the wall-crossing formulae in \cite{Joyce2021}. Since the input from
geometry and the choice of cohomology theory is more or less the same
for the construction of these classes as for the proof of their
wall-crossing formulae, this paper focuses only on their construction.

Many variations on this construction are possible. In particular, when
$\dim X = 3$ and $\fM$ has a {\it symmetric} obstruction theory, all
constructions in this paper continue to hold after an appropriate
symmetrization. Special cases already have powerful applications,
including a construction \cite{Liu2023} of refined semistable
Vafa--Witten invariants satisfying the main conjecture of
\cite{Thomas2020}, and a proof of the 3-fold K-theoretic DT/PT vertex
correspondence \cite{Kuhn2023}.

\subsection{Outline of the paper}

Section~\ref{sec:setup} sets the stage. In \S\ref{sec:k-theory}, we
review equivariant K-groups $K_\sT(\fM)$ and $K^\circ_\sT(\fM)$,
K-theoretic Euler and Chern classes of equivariant (virtual) vector
bundles, and (perfect) obstruction theories and their smooth
pullbacks. In \S\ref{sec:operational-K-homology}, we define
``abstract'' operational equivariant K-homology groups
$\bfK^\sT_\circ(-)$ of algebraic stacks, along with a ``concrete''
instantiation $K^\sT_\circ(-)$ of them which is suitable for
applications to enumerative geometry, in particular wall-crossing. In
\S\ref{sec:operational-k-homology-BG}, we compute $\bfK^\sT_\circ(-)$
of the classifying stack $[*/\bC^\times]$, both for use in
multiplicative vertex algebras and also for comparison with the
K-homology of its topological realization $BU(1) = \bC\bP^\infty$.

Section~\ref{sec:equivariant-mVOA} presents the equivariant and
multiplicative analogue of ordinary vertex algebras. In
\S\ref{sec:series-and-residues} and
\S\ref{sec:equivariant-mVOA-general-theory}, we give the general
definition and properties, in complete analogy with the theory of
ordinary vertex algebras, including OPEs and an induced Lie bracket
$[-, -]$ on a suitable quotient. In \S\ref{sec:mVOA-on-K-homology}, we
construct a equivariant multiplicative vertex algebra structure on
$\bfK^\sT_\circ(\fM)$ for Artin stacks $\fM$ which we call {\it graded
  monoidal}. This induces the Lie bracket on $\bfK^\sT_\circ(\fM)^\pl$
used in wall-crossing formulae. In \S\ref{sec:mVOA-on-K-theory}, we
observe that the same construction on $K_\sT(\fM)$ is well-defined in
special cases and makes it into a {\it holomorphic} equivariant
multiplicative vertex algebra. Under an extra assumption, these
correspond to graded algebras and should be compared with K-theoretic
Hall algebras.

Section~\ref{sec:wall-crossing} is mostly an exposition of the
construction \cite[Theorem 5.7]{Joyce2021} of semistable invariants,
but rewritten (in slightly less generality, for simplicity) in our
equivariant K-theoretic framework. Several components of Joyce's big
machine have been black-boxed and what remains is only the
construction of an auxiliary category/stack of pairs, in
\S\ref{sec:moduli-stacks}, and the key geometric argument using
localization on a master space for these pairs, in
\S\ref{sec:semistable-invariants}. Hopefully this makes for an
accessible introduction to \cite{Joyce2021}.

Appendix~\ref{sec:residue-maps} is a discussion and characterization
of the K-theoretic residue map which mediates the passage from the
equivariant multiplicative vertex algebra $\bfK^\sT_\circ(\fM)$ to the
Lie algebra $\bfK^\sT_\circ(\fM)^\pl$.

\subsection{Acknowledgements}

A substantial portion of this project is either inspired by or based
on D. Joyce's work \cite{Joyce2021}, and benefitted greatly from
discussions with him as well as A. Bojko, C.-J. Bu, I. Karpov, N.
Kuhn, M. Moreira, A. Okounkov, F. Thimm, and M. Upmeier. This research
was supported by the Simons Collaboration on Special Holonomy in
Geometry, Analysis and Physics, and the World Premier International
Research Center Initiative (WPI), MEXT, Japan.

\section{The K-homology group}
\label{sec:setup}

\subsection{Review of K-theory}
\label{sec:k-theory}

\subsubsection{}

We work over $\bC$. Let $\sG$ be an algebraic group (i.e. group scheme
of finite type) and let $\cat{Art}_\sG$ be the strict $2$-category of
Artin stacks with $\sG$-action \cite{Romagny2005}. We will rarely use
the $2$-categorical structure. Equalities of morphisms will take place
in the homotopy category of $\cat{Art}_\sG$. When $\sG$ is trivial, we
omit the subscript $\sG$ from all relevant objects.

\subsubsection{}

\begin{definition}
  Given $\fX \in \cat{Art}_\sG$, let $\cat{Coh}_\sG(\fX)$ (resp.
  $\cat{Perf}_\sG(\fX)$) be the abelian (resp. triangulated) category
  of $\sG$-equivariant coherent sheaves (resp. perfect complexes) on
  $\fX$. Define the {\it K-theory groups}
  \begin{align*}
    K_\sG(\fX) &\coloneqq K_0(\cat{Coh}_{\sG}(\fX)) \\
    K_\sG^\circ(\fX) &\coloneqq K_0(\cat{Perf}_{\sG}(\fX))
  \end{align*}
  Tautologically, $K_\sG(\fX) = K([\fX/\sG])$ and similarly for
  $K_\sG^\circ$. Both $K^\circ_\sG(\fX)$ and $K_\sG(\fX)$ are modules
  for the representation ring $\bk_\sG \coloneqq K_\sG(*)$. When $\sG$
  is a (split) torus $\sT \coloneqq (\bC^\times)^r$,
  \[ \bk_\sT \cong \bZ[t^\mu : \mu \in \Char(\sT)] \]
  where $\Char(\sT) \coloneqq \Hom(\sT, \bC^\times)$ is the character
  lattice. We refer to the $t^\mu$ as {\it weights} of $\sT$.
\end{definition}

\subsubsection{}

In the literature, our $K(-)$ is often called {\it G-theory} and our
$K^\circ(-)$ is often called {\it (Thomason--Trobaugh) K-theory}. One
may also consider the exact category $\cat{Vect}_\sG(\fX)$ of
$\sG$-equivariant vector bundles on $\fX$, whose K-group we call {\it
  Quillen K-theory}. The inclusion $\cat{Vect}_{\sG} \subset
\cat{Perf}_{\sG}$ induces a map
\[ K_0(\cat{Vect}_\sG(\fX)) \to K^\circ_\sG(\fX). \]
This is an isomorphism if $\fX$ has the {\it $\sG$-equivariant
  resolution property}, namely if every $\sG$-equivariant coherent
sheaf on $\fX$ is a quotient of a $\sG$-equivariant vector bundle on
$\fX$ \cite{Totaro2004}.

While many of our constructions take place in Quillen K-theory, e.g.
Definition~\ref{def:k-theoretic-chern-class}, Thomason--Trobaugh
K-theory is better-behaved for stacks; see \cite[\S 2]{Khan2022}. So
we choose to use Thomason--Trobaugh K-theory in this paper.

\subsubsection{}

We consider $K_\sG^\circ(-)$ only for Artin stacks $\fX$ locally of
finite type. We list some of its properties.
\begin{itemize}
\item The (derived) tensor product induces $\otimes\colon
  K_\sG^\circ(\fX) \otimes K_\sG^\circ(\fX) \to K_\sG^\circ(\fX)$
  making $K_\sG^\circ(\fX)$ into a $\bk_\sG$-algebra.
\item A $\sG$-equivariant morphism $f\colon \fX \to \fY$ induces a
  pullback $f^*\colon K_\sG^\circ(\fY) \to K_\sG^\circ(\fX)$. It is
  compatible with tensor product:
  \begin{equation} \label{eq:k-theory-pullback-tensor-product}
    f^*(\cE \otimes \cE') = f^*\cE \otimes f^*\cE', \qquad \cE, \cE' \in K_\sT^\circ(\fY).
  \end{equation}
  In particular, $f^*$ is a morphism of $\bk_\sG$-modules.
\end{itemize}
We do not consider pushforwards, in general.

\subsubsection{}

We consider $K_\sG(-)$ mostly only for schemes $X$ of finite type. We
list some of its properties.
\begin{itemize}
\item The (derived) tensor product induces $\otimes\colon
  K_\sG^\circ(X) \otimes K_\sG(X) \to K_\sG(X)$
  making $K_\sG(X)$ into a $K_\sG^\circ(X)$-module.
\item A proper $\sG$-equivariant morphism $f\colon X
  \to Y$ induces a derived pushforward $f_*\colon K_\sG(X) \to
  K_\sG(Y)$. It satisfies a {\it projection formula}
  \[ f_*(f^*\cE \otimes \cF) = \cE \otimes f_*(\cF), \qquad \cE \in K_\sG^\circ(Y), \; \cF \in K_\sG(X). \]
  In particular, $f_*$ is a morphism of $\bk_\sG$-modules.
\end{itemize}

\subsubsection{}

\begin{definition}
  Let
  \[ I^\circ_\sG(\fX) \subset K_0(\cat{Vect}_\sG(\fX)) \]
  be the {\it augmentation ideal} of rank-$0$ elements. If $\fX = *$,
  we just write $I^\circ_\sG$.

  Since $K_\sG(\fX)$ is also a $K_0(\cat{Vect}_\sG(\fX))$-module by
  tensor product, $I^\circ_\sG(\fX)$ acts on $K_\sG(\fX)$. But
  typically, when working with $I^\circ_\sG(\fX)$, we will only be
  interested in the case where $\fX$ has the $\sG$-equivariant
  resolution property, so that $I^\circ_\sG(\fX) \subset
  K^\circ_\sG(\fX)$.
\end{definition}

\subsubsection{}

\begin{lemma} \label{lem:augmentation-ideal-vanishing-for-schemes}
  Let $X$ be a finite-type scheme. Then
  \begin{equation} \label{eq:augmentation-ideal-vanishing-for-schemes}
    I^\circ(X)^{\otimes N}K(X) = 0, \qquad \forall N \gg 0.
  \end{equation}
\end{lemma}

\begin{proof}
  We construct such an $N \in \bZ$. Note that $I^\circ(X)$ is
  generated by elements $\cO_X^{\oplus \rank \cE} - \cE$ where $\cE$
  is a vector bundle. Let $\cF$ be a coherent sheaf on $X$. Pick an
  open $U \subset X$ where
  \[ \cE|_U \cong \cO_U^{\oplus \rank \cE} \]
  is trivial. (Here, we really use that $X$ is a scheme; for instance,
  no such $U$ exists for a non-trivial line bundle on
  $[*/\bC^\times]$.) By the long exact sequence
  \[ \cdots \to K(X \setminus U) \xrightarrow{i_*} K(X) \xrightarrow{j^*} K(U) \to 0 \]
  associated to the inclusions $X \setminus U \xhookrightarrow{i} X
  \xhookleftarrow{j} U$, we see that $\supp (\cO_X^{\oplus \rank \cE}
  - \cE) \otimes \cF \subset X \setminus U$. Repeating with the
  coherent sheaf $(\cO_X^{\oplus \rank \cE} - \cE) \otimes \cF$ on $X
  \setminus U$ in place of the coherent sheaf $\cF$ on $X$, it follows
  that
  \[ \dim \supp (\cO_X^{\oplus \rank \cE} - \cE)^{\otimes n} \otimes \cF < \dim \supp \cF \]
  for some $n \in \bZ$ which depends only on the number of irreducible
  components in $X$. Since $\dim X < \infty$, we are done by induction
  on $\dim X$.
\end{proof}

\subsubsection{}

\begin{definition} \label{def:k-theoretic-chern-class}
  Given a $\sG$-equivariant vector bundle $\cE$ on $\fX$, let
  \[ \wedge_{-s}^\bullet(\cE) \coloneqq \sum_i (-s)^i \wedge^i(\cE) = \prod_\cL (1 - s\cL) \in K_0(\cat{Vect}_\sG(\fX))[s^\pm] \]
  be its exterior algebra graded by a formal variable $s$; the product
  ranges over (K-theoretic) Chern roots $\cL$ of $\cE$, by the
  splitting principle. Extend $\wedge_{-s}^\bullet(-)$ to
  $K_0(\cat{Vect}_\sG(\fX))$:
  \[ \wedge_{-s}^\bullet(\cE_1 - \cE_2) \coloneqq \frac{\wedge_{-s}^\bullet(\cE_1)}{\wedge_{-s}^\bullet(\cE_2)} \in K_0(\cat{Vect}_\sG(\fX))\lseries*{(1-s)^{-1}}, \]
  using the splitting principle and the formula
  \begin{equation} \label{eq:k-theoretic-inverse-chern-root}
    \frac{1}{\wedge_{-s}^\bullet(\cL)} \coloneqq \frac{1}{(1 - s) - s(\cL - 1)} \coloneqq \sum_{k \ge 0} \frac{s^k}{(1 - s)^{k+1}} (\cL - 1)^k,
  \end{equation}
  for Chern roots $\cL$ of $\cE_2$, to define the inverse of
  $\wedge_{-s}^\bullet(\cE_2)$. For $\cE \in
  K_0(\cat{Vect}_\sG(\fX))$, define
  \[ c_i^K(\cE) \coloneqq \text{coefficient of } (1 - s)^{\rank \cE-i} \text{ in } \wedge_{-s}^\bullet(\cE^\vee) \]
  and $c_{\text{rank}}^K(\cE) \coloneqq c_{\rank \cE}^K(\cE)$. These
  are K-theoretic analogues of {\it Conner--Floyd--Chern classes}
  \cite{Conner1966}, up to a factor of $\det\cE$. 
\end{definition}

\subsubsection{}

\begin{lemma} \label{lem:k-theoretic-chern-class}
  Let $\fX \in \cat{Art}_\sG$ and $\cE \in K_0(\cat{Vect}_\sG(\fX))$.
  \begin{enumerate}
  \item If $\cE$ is a vector bundle, then $c_{\text{rank}}^K(\cE) =
    \wedge_{-1}^\bullet(\cE^\vee) = \se(\cE)$.
  \item $c_i^K(\cE + \cO_\fX) = c_i^K(\cE)$ for all $i \in \bZ$;
  \item $c_{\text{rank}}^K(\cE + \cE') = c_{\text{rank}}^K(\cE)
    c_{\text{rank}}^K(\cE')$ for any $\cE' \in
    K_0(\cat{Vect}_\sG(\fX))$.
  \end{enumerate}
\end{lemma}

\begin{proof}
  Straightforward.
\end{proof}
  
Comparing K-theory to cohomology, for $\cE$ which is a vector bundle,
$c_{\text{rank}}^K$ is the analogue of the top (Connor--Floyd--)Chern
class, and $\se$ is the analogue of the Euler class. For general
$\cE$, these are two different classes, and the same is true of
$c_{\text{rank}}^K$ and $\se$.

\subsubsection{}

\begin{definition} \label{def:k-theoretic-euler-class}
  Let $\sT$ be a torus acting trivially on $\fX$. Then every
  $\sT$-equivariant vector bundle on $\fX$ decomposes as
  \[ \cE = \bigoplus_{\mu} t^\mu \otimes \cE_\mu, \qquad \cE_\mu \in \cat{Vect}(\fX) \]
  for a finite set of $\sT$-weights $\mu$. Its {\it K-theoretic Euler
    class} is the operator
  \[ \se(\cE) \otimes \coloneqq \bigotimes_\mu \wedge_{-t^{-\mu}}^\bullet(\cE_\mu^\vee) \otimes\colon K_\sT(\fX) \to K_\sT(\fX) \]
  Extend this multiplicatively to the subgroup of
  $K_0(\cat{Vect}_\sT(\fX))$ consisting of (virtual) bundles $\cE$
  with no trivial $\sT$-weight subbundles:
  \begin{equation} \label{eq:k-theoretic-euler-class}
    \se(\cE) \otimes \in \End_{\bk_\sT}(K_\sT(\fX))\pseries*{(1 - t^\mu)^{-1} : 0 \neq \mu \in \Char(\sT)}.
  \end{equation}
  For a given $\cE$, only the $\sT$-weights $t^\mu$ occuring in $\cE$
  will appear in $\se(\cE) \otimes$, so in particular only finitely
  many $(1 - t^\mu)^{-1}$ need to be adjoined. Coefficients in
  \eqref{eq:k-theoretic-euler-class} are operators of multiplication
  by various K-theoretic Connor--Floyd--Chern classes of the
  $\cE_\mu$.

  Note that $\se(-) \otimes$ is different from
  $\wedge_{-s}^\bullet(-)^\vee \otimes$, which may not even be
  well-defined at $s=1$.
\end{definition}

\subsubsection{}

\begin{lemma} \label{lem:k-theory-inverse-euler-class}
  Given $\cE$, there exists a function $M(N)$ such that $\lim_{N \to
    \infty} M(N) = \infty$ and
  \[ c_N^K(\cE) \in I^\circ(\fX)^{\otimes M(N)}. \]
  Consequently, if $\fX = X$ is a finite-type scheme, then
  \eqref{eq:k-theoretic-euler-class} truncates to a homomorphism
  \[ \se(\cE) \otimes\colon K_\sT(X) \to K_\sT(X)_{\loc}. \]
\end{lemma}

Recall that if $M$ is a $\bk_\sT$-module, then $M_{\loc}\coloneqq M
\otimes_{\bk_\sT} \bk_{\sT,\loc}$ where
\[ \bk_{\sT,\loc} \coloneqq \bk_\sT[(1 - t^\mu)^{-1} : 0 \neq \mu \in \Char(\sT)]. \]

\begin{proof}
  It suffices to check this claim for a vector bundle $\cE$ and the
  expansion of $1/\wedge_{-s}^\bullet(\cE)$ as a Laurent series in $(1
  - s)^{-1}$ using the splitting principle and
  \eqref{eq:k-theoretic-inverse-chern-root}. Observe that
  \begin{align*}
    \wedge_{-s}^\bullet(\cE)
    &= \prod_\cL \left((1 - \cL) + \cL (1 - s)\right) \\
    &\in \det(\cE) (1 - s)^{\rank \cE} \cdot (1 + I^\circ(X)[(1 - s)^{-1}])
  \end{align*}
  since the coefficient of $(1-s)^{-k}$ is a degree-$k$ symmetric
  polynomial in the variables $1 - \cL$. Therefore the coefficients of
  its inverse, as a series in $(1-s)^{-1}$, have the desired property.

  The remaining claim follows from the definition of $\se(\cE)
  \otimes$ and
  Lemma~\ref{lem:augmentation-ideal-vanishing-for-schemes}.
\end{proof}
  
\subsubsection{}

\begin{remark} \label{rem:k-theory-inverse-euler-class}
  Lemma~\ref{lem:k-theory-inverse-euler-class} may be extended to
  separated finite-type Deligne--Mumford stacks $X$, as follows. The
  difficulty is that, in \eqref{eq:k-theoretic-inverse-chern-root},
  multiplication by $1 - \cL$ may not be nilpotent; this is already
  clear for $X = [*/G]$ where $G$ is a finite group. However, it is
  true that $\cL^{\otimes m}$ descends to the coarse moduli space for
  some $m \in \bZ$, and
  Lemma~\ref{lem:augmentation-ideal-vanishing-for-schemes} continues
  to hold for arbitrary algebraic spaces, see e.g. \cite[Lemma
    2.4]{Edidin2000}. Consequently
  \eqref{eq:k-theoretic-inverse-chern-root} may be replaced by
  \begin{equation} \label{eq:k-theoretic-inverse-chern-root-DM}
    \frac{1}{1 - s\cL} = \sum_{k=0}^{m-1} \frac{(s\cL)^{\otimes k}}{1 - (s \cL)^{\otimes m}}
  \end{equation}
  and multiplication by each term in this finite sum is well-defined
  on $K_\sT(X)_{\loc}$ using \eqref{eq:k-theoretic-inverse-chern-root}
  because multiplication by $1 - \cL^{\otimes m}$ is now nilpotent.
\end{remark}

\subsubsection{}

\begin{definition} \label{def:obstruction-theories}
  Recall that a {\it $\sG$-equivariant obstruction theory} on $\fX \in
  \cat{Art}_\sG$ is an object $\bE_{\fX}$ and a morphism
  $\varphi\colon \bE_{\fX} \to \bL_{\fX}$, where $\bL_\fX$ is the
  cotangent complex \cite{Illusie1971}, in the derived category of
  $\sG$-equivariant sheaves on $\fX$ with quasi-coherent cohomology,
  such that $h^1(\varphi)$, $h^0(\varphi)$ are isomorphisms and
  $h^{-1}(\varphi)$ is surjective. If further $\bE_{\fX}$ is perfect
  of amplitude $[-1,1]$ then $\varphi$ is a {\it perfect} obstruction
  theory.

  Given a smooth $\sG$-equivariant morphism $f\colon \fX \to \fY$ of
  Artin stacks, two $\sG$-equivariant obstruction theories
  $\varphi\colon \bE_\fX \to \bL_{\fX}$ and $\phi\colon \bE_{\fY} \to
  \bL_{\fY}$ are {\it compatible under $f$} if they fit into a diagram
  \begin{equation} \label{eq:obstruction-theories-compatibility}
    \begin{tikzcd}
      \bL_f[-1] \ar{r}{\Xi} \ar[equal]{d} & f^*\bE_{\fY} \ar{r} \ar{d}{f^*\phi} & \bE_{\fX} \ar{r} \ar{d}{\varphi} & \bL_f \ar[equals]{d} \\
      \bL_f[-1] \ar{r} & f^*\bL_{\fY} \ar{r} & \bL_{\fX} \ar{r} & \bL_f
    \end{tikzcd}
  \end{equation}
  where both rows are exact triangles (the bottom one for the relative
  cotangent complex $\bL_f$).

  Note that $\varphi\colon \cone(\Xi) \to \bL_\fX$ can be constructed
  given only $\phi$ and $\Xi$ which make the leftmost square commute,
  in which case we say $\varphi$ is a {\it smooth pullback} of $\phi$
  along $f$. In general, if $\phi$ is constructed in a sufficiently
  functorial manner, then a natural $\Xi$ exists; see e.g.
  \cite[Appendix B]{Graber1999} \cite[Lemma 5.18]{Kuhn2021}.
\end{definition}

\subsubsection{}

If $X$ is a proper scheme (or Deligne--Mumford stack, using
Remark~\ref{rem:k-theory-inverse-euler-class}) acted on by a torus
$\sT$, and $X$ has a $\sT$-equivariant perfect obstruction theory
\cite{Behrend1997}, one can do equivariant enumerative geometry using:
\begin{itemize}
\item the virtual structure sheaf $\cO_X^\vir \in K_\sT(X)$
  \cite{Lee2004, Ciocan-Fontanine2009};
\item the Euler characteristic $\chi(X, -)\coloneqq \sum_i (-1)^i
  H^i(X, -)\colon K_\sT(X) \to \bk_\sT$;
\item the virtual torus localization theorem \cite{Graber1999, Qu2018}
  \begin{equation} \label{eq:virtual-cycle-localization}
    \cO_X^\vir = \iota_*\frac{\cO_{X^\sT}^\vir}{\se(\cN_\iota^\vir)} \in K_\sT(X)_{\loc}
  \end{equation}
  where $\iota\colon X^\sT \hookrightarrow X$ is the $\sT$-fixed
  locus, $\cN_\iota^\vir$ is the virtual normal bundle (see
  Remark~\ref{rem:localization-resolution-property} below), and the
  inverse is well-defined by
  Lemma~\ref{lem:k-theory-inverse-euler-class}.
\end{itemize}
If $X$ is not proper, then $\chi(X, -)$ is not well-defined in
general. But if $X^\sT$ is proper, then we define $\chi(X, \cO_X^\vir)
\in \bk_{\sT,\loc}$ by localization, i.e. using the right hand side of
\eqref{eq:virtual-cycle-localization}.

\subsubsection{}

\begin{remark} \label{rem:localization-resolution-property}
  In the literature for virtual localization as in
  \eqref{eq:virtual-cycle-localization}, it is common to require that
  $\cN_\iota^\vir$ has a (necessarily two-term) resolution by vector
  bundles, namely that it lies in $K_0(\cat{Vect}_\sT(X^\sT))$ instead
  of $K_\sT^\circ(X^\sT)$, so that its K-theoretic Euler class
  $\se(\cN_\iota^\vir)$ is well-defined. The easiest way to satisfy
  this requirement is to ensure $X^{\sT}$ has the resolution property,
  so that these two K-groups are equal. One may define K-theoretic
  Euler classes of perfect complexes in general and prove that
  \eqref{eq:virtual-cycle-localization} still holds, as explained by
  \cite{Aranha2024} for Chow homology. Then no extra requirement on
  $X^{\sT}$ or $\cN_\iota^\vir$ is necessary.
\end{remark}

\subsubsection{}

\begin{lemma}[Virtual projective bundle formula] \label{lem:projective-bundle-formula}
  Let $\pi\colon X \to Y$ be a $\sT$-equivariant morphism of
  finite-type schemes, with $\sT$-equivariant perfect obstruction
  theories compatible under $\pi$. If $\pi$ is a $\bP^{N-1}$-bundle,
  then
  \[ \pi_*\left(\cO_X^\vir \otimes \wedge_{-1}^\bullet(\bL_\pi)\right) = N \cdot \cO_Y^\vir \in K_\sT(Y). \]
\end{lemma}

\begin{proof}
  By virtual pullback \cite{Manolache2012, Qu2018} or otherwise,
  $\cO_X^\vir = \pi^*\cO_Y^\vir$. By projection formula, it suffices
  to compute $\pi_* \wedge_{-1}^\bullet(\bL_\pi)$. This is some
  combination of relative cohomology bundles $R^i\pi_*\Omega^j_\pi$,
  which are all canonically trivialized by powers of the hyperplane
  class. So it is enough to compute on fibers:
  \[ \chi\left(\bP^{N-1}, \wedge_{-1}^\bullet(\Omega_{\bP^{N-1}})\right) = \sum_{i,j=0}^N (-1)^{i+j} H^{i,j}(\bP^{N-1}) = N \in \bk_\sT. \]
  Note that the $H^{i,j}$ are valued in trivial $\sT$-representations
  by Hodge theory.
\end{proof}

\subsection{Operational K-homology}
\label{sec:operational-K-homology}

\subsubsection{}
\label{sec:k-homology-support-category}

Let $\sT$ be a torus. Our K-homology groups, to be defined below,
depend on the choice of a full $2$-subcategory
\[ \cat{C} \subset \cat{Art}_\sT \]
whose elements have trivial $\sT$-action and the resolution property,
such that $\cat{C}$ is closed under fiber products over $\Spec \bC$
and contains all projective schemes. (This last condition is so that
Proposition~\ref{prop:operational-k-homology-BGm} always holds.) For
instance, $\cat{C}$ could consist of all quasi-projective schemes
\cite{Totaro2004}.

The resolution property is only really used to define K-theoretic
Euler classes of perfect complexes and can be omitted with more care
(see Remark~\ref{rem:localization-resolution-property}), in which case
$\cat{C}$ could simply consist of all proper schemes. In particular,
in the vertex algebra construction of \S\ref{sec:mVOA-on-K-homology},
the resolution property plays a role only in
Lemma~\ref{sec:monoidal-stack-vertex-product-well-defined}.

With even more care, one can probably replace ``schemes'' with
``algebraic spaces'' as well.

\subsubsection{}

\begin{definition}[``Abstract'' K-homology] \label{def:operational-k-homology}
  The {\it $\sT$-equivariant operational K-homology group} of $\fX \in
  \cat{Art}_\sT$, of {\it elements with $\cat{C}$-theoretic support},
  is the $\bk_\sT$-module $\bK_\circ^\sT(\fX) = \bK_\circ^\sT(\fX;
  \cat{C})$ which consists of all collections
  \[ \phi\coloneqq \{K^\circ_\sT(\fX \times S) \xrightarrow{\phi_S} K^\circ_\sT(S)\}_{S \in \cat{Art}_\sT} \]
  of homomorphisms of $K^\circ_\sT(S)$-modules, for all $S \in
  \cat{Art}_\sT$ which we call the {\it base}, which obey the
  following axioms.
  \begin{enumerate}
  \item (Naturality) For any $\sT$-equivariant morphism $h\colon S \to
    S'$ of Artin stacks, there is a commutative square
    \[ \begin{tikzcd}
        K^\circ_{\sT}(\fX \times S') \ar{r}{(\id \times h)^*} \ar{d}{\phi_{S'}} & K^\circ_{\sT}(\fX \times S) \ar{d}{\phi_S} \\
        K^\circ_{\sT}(S') \ar{r}{h^*} & K^\circ_{\sT}(S).
      \end{tikzcd} \]
  \item (Equivariant localization) There exists $\fF_\phi \in \cat{C}$
    and a map $\fix_\phi\colon \fF_\phi \to \fX$ in $\cat{Art}_\sT$
    such that $\phi_S$ factors as
    \[ \begin{tikzcd}
        K_{\sT}^\circ(\fX \times S) \ar{rr}{\phi_S} \ar{dr}[swap]{(\fix_\phi \times \id)^*} && K_{\sT}^\circ(S) \\
        & K_{\sT}^\circ(\fF_\phi \times S) \ar{ur}[swap]{\phi_S^{\sT}}
      \end{tikzcd} \]
    for homomorphisms $\{\phi_S^{\sT}\}_S$ of
    $K_{\sT}^\circ(S)$-modules which themselves satisfy all other
    axioms, i.e. forming an element $\phi^\sT \in \bK_\circ^\sT(\fF_\phi;
    \cat{C})$.
  \item (Finiteness) for any base $S \in \cat{C}$,
    \begin{equation} \label{eq:operational-k-homology-finiteness-condition}
      \phi_S^\sT\left(I^\circ(\fF_\phi \times S)^{\otimes N}\right) = 0, \qquad \forall N \gg 0.
    \end{equation}
  \end{enumerate}
  The sum $\phi + \psi$ of two elements $\phi, \psi \in
  \bK_\circ^\sT(\fX)$ still satisfies the equivariant localization and
  finiteness axioms by setting $\fF_{\phi + \psi} \coloneqq \fF_\phi
  \sqcup \fF_\psi$.
  
  Similarly, define the {\it localized} groups
  $\bK_\circ^\sT(\fX)_{\loc}$ by replacing all groups
  $K_{\sT}^\circ(-)$ with the localized groups
  $K_{\sT}^\circ(-)_{\loc}$.

  For short, from here on we refer to the groups $\bK_\circ^\sT(-)$
  simply as {\it K-homology}.
\end{definition}

\subsubsection{}

When $\fX = X$ is a scheme with $\sT$-action and proper $\sT$-fixed
locus, the definition and nomenclature for $\bK^\sT_\circ(X)$ should
be compared to the {\it operational K-theory} of $X^\sT$
\cite{Anderson2015}, which is a bivariant theory in the sense of
\cite{Fulton1981} modeled on $K(X^\sT)$. Our notion of K-homology
arises from an analogous bivariant theory modeled on $K^\circ(X^\sT)$,
applied to the map $X^\sT \to *$.

We refer to $\bK_\circ^\sT$ as ``abstract'' K-homology because its
definition imposed only the minimal set of axioms on its elements
$\phi = \{\phi_S\}_S$ so that the vertex algebra construction in
\S\ref{sec:mVOA-on-K-homology} works. So, unlike for operational
K-theory, $\bK_\circ^\sT$ is potentially enormous and not usually
explicitly computable. For applications where an explicit description
of the K-homology group is required, we construct a ``concrete''
K-homology group $K_\circ^\sT$
(Definition~\ref{def:concrete-k-homology}) later.

\subsubsection{}
\label{sec:k-homology-shorthand}

To prevent notational clutter, and for clarity, we generally write
formulas involving elements $\phi$ in a K-homology group
$\bK_\circ^\sT(\fX)$ in terms of just the functional $\phi_S$ for $S =
*$. It will always be clear how to extend the formula to $\phi_S$ for
general $S$. For instance, in \eqref{eq:k-homology-pushforward} below,
the definition $(f_*\phi)(\cE) \coloneqq \phi(f^*\cE)$ means that
\[ f_*\phi \coloneqq \{(f_*\phi)_S\colon K^\circ_\sT(\fY \times S) \to K^\circ_\sT(S)\} \in \bK^\sT_\circ(\fY) \]
is defined by $(f_*\phi)_S(\cE) \coloneqq \phi_S((f \times \id)^*\cE)$
for every base $S$.

\subsubsection{}

Some properties of $K^\circ_\sT(\fX)$ are inherited by
$\bK_\circ^\sT(\fX)$ as follows.
\begin{itemize}
\item A $\sT$-equivariant morphism $f\colon \fX \to \fY$ induces a
  pushforward
  \begin{equation} \label{eq:k-homology-pushforward}
    f_*\colon \bK^\sT_\circ(\fX) \to \bK^\sT_\circ(\fY), \qquad (f_*\phi)(\cE) \coloneqq \phi(f^*\cE).
  \end{equation}
  It is straightforward to check that $f_*\phi$ satisfies all
  the K-homology axioms, using the obvious maps. 

\item Tensor product on $K^\circ_\sT(\fX)$ induces a cap product
  \begin{equation} \label{eq:k-homology-cap-product}
    \cap\colon \bK^\sT_\circ(\fX) \times K^\circ_\sT(\fX) \to \bK^\sT_\circ(\fX), \qquad (\phi \cap \cF)(\cE) \coloneqq \phi(\cE \otimes \cF).
  \end{equation}
  This is well-defined since $I^\circ(\fF_\phi)$ is an ideal. 

\item If $\bK^\sT_\circ(\fX \times \fX) \cong
  \bK^\sT_\circ(\fX)^{\otimes 2}$ (a very restrictive condition, in
  contrast to homology), then the diagonal map $\Delta\colon \fX \to
  \fX \times \fX$ induces a coproduct
  \begin{equation} \label{eq:operational-K-homology-coproduct}
    \Delta_*\colon \bK^\sT_\circ(\fX) \to \bK^\sT_\circ(\fX) \times \bK^\sT_\circ(\fX).
  \end{equation}
\end{itemize}
Hence $K^\sT_\circ(\fX)$ behaves like a K-theoretic version of
homology.

\subsubsection{}

\begin{lemma}[Push-pull]
  For any $\sT$-equivariant $f\colon \fX \to \fY$, and any
  $\phi \in \bK^\sT_\circ(\fX)$, $\cE \in K_\sT^\circ(\fY)$,
  \[ f_*(\phi) \cap \cE = f_*\left[\phi \cap f^*(\cE)\right]. \]
\end{lemma}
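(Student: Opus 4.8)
The plan is to unwind both sides of the claimed identity using the definitions of pushforward and cap product given in the preceding list of operations, reducing everything to an associativity of pullback applied to the operational class $\phi$. The key point is that all three operations ($f_*$, $\cap\,\cE$, $\cap f^*\cE$) are defined by precomposing the homomorphisms $\{\phi_S\}_S$ with pullback and tensor operations on $K^\circ_\sT(-\times S)$, so the statement becomes a bookkeeping identity at the level of those homomorphisms, tested against an arbitrary $\cG \in K^\circ_\sT(\fX' \times S)$.

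First I would fix a base $S$ and an element $\cG \in K^\circ_\sT(\fX' \times S)$, and compute the $S$-component of the left-hand side:
\[ \bigl(f_*(\phi) \cap \cE\bigr)_S(\cG) = \bigl(f_*\phi\bigr)_S\bigl(\cG \otimes \pi_{\fX'}^*\cE\bigr) = \phi_S\bigl((\id\times\,?)^*\text{-type pullback of } \cG \otimes \pi_{\fX'}^*\cE\bigr), \]
where by the definition $(f_*\phi)_S(-) = \phi_S((f\times\id)^*(-))$ one gets $\phi_S\bigl((f\times\id)^*(\cG \otimes \pi_{\fX'}^*\cE)\bigr)$. Then I would expand the right-hand side: $\bigl(\phi \cap f^*(\cE)\bigr)_S(\cG') = \phi_S\bigl(\cG' \otimes \pi_\fX^* f^*\cE\bigr)$ with $\cG' = (f\times\id)^*\cG$, so that $\bigl(f_*(\phi \cap f^*\cE)\bigr)_S(\cG) = \phi_S\bigl((f\times\id)^*\cG \otimes \pi_\fX^* f^*\cE\bigr)$. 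Comparing the two, everything reduces to the identity
\[ (f\times\id)^*\bigl(\cG \otimes \pi_{\fX'}^*\cE\bigr) = (f\times\id)^*\cG \otimes \pi_\fX^* f^*\cE \]
in $K^\circ_\sT(\fX \times S)$, which follows from the fact that pullback along $f\times\id$ is a ring homomorphism (compatibility of $\otimes$ with pullback in $K^\circ_\sT$) together with the commuting square $\pi_{\fX'}\circ(f\times\id) = f\circ\pi_\fX$, i.e. $(f\times\id)^*\pi_{\fX'}^* = \pi_\fX^* f^*$.

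The only genuine subtlety — and the step I would flag as needing a line of care rather than the computation itself — is to confirm that the cap products appearing are well-defined on the relevant classes, i.e. that forming $f_*\phi$ does not spoil the finiteness condition from Definition~\ref{def:operational-k-homology}(iii): the localization datum for $f_*\phi$ can be taken to be $f \circ \fix_\phi \colon \fF_\phi \to \fX'$, so $\fF_{f_*\phi} = \fF_\phi$ and the augmentation-ideal vanishing is inherited verbatim, and similarly $\cap\,\cE$ and $\cap f^*\cE$ leave $\fF$ unchanged because $I^\circ(\fF_\phi)$ is an ideal (as already used to define $\cap$). Once this is noted, the proof is just the display-math manipulation above, and I would present it in that order: expand the left side, expand the right side, invoke functoriality of pullback and the projection-square identity to match them.
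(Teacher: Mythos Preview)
Your proposal is correct and follows essentially the same route as the paper's proof: both unwind the definitions and reduce to the fact that pullback is a ring homomorphism, i.e.\ $f^*(\cE \otimes \cE') = f^*\cE \otimes f^*\cE'$. Your version is simply more explicit about tracking the base $S$ and the projection maps, and adds the (harmless) remark about the finiteness condition, whereas the paper compresses all of this into one line.
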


\begin{proof}
  Applying both sides to $\cE' \in K^\circ_\sT(\fY)$, this reduces to
  the equality \eqref{eq:k-theory-pullback-tensor-product} in
  $K_\sT^\circ(\fX)$.
\end{proof}

\subsubsection{}

\begin{lemma}
  Let $\phi \in \bK^\sT_\circ(\fX)$ and $\psi \in \bK^\sT_\circ(\fX')$.
  The bottom left composition in
  \begin{equation} \label{eq:operational-k-homology-boxtimes}
    \begin{tikzcd}
      K^\circ_\sT(\fX \times \fX' \times S) \ar{r}{\psi_{\fX \times S}} \ar{d}[swap]{\phi_{\fX' \times S}} & K^\circ_\sT(\fX \times S) \ar{d}{\phi_S} \\
      K^\circ_\sT(\fX' \times S) \ar{r}{\psi_S} & K^\circ_\sT(S)
    \end{tikzcd}
  \end{equation}
  yields a well-defined element
  \begin{equation} \label{eq:k-homology-boxtimes}
    \phi \boxtimes \psi \in \bK^\sT_\circ(\fX \times \fX'). 
  \end{equation}
\end{lemma}

\begin{proof}
  Clearly the composition of $K_\sT^\circ(S)$-linear homomorphisms is
  still a $K_\sT^\circ(S)$-linear homomorphism. We check the
  K-homology axioms.
  \begin{enumerate}
  \item (Naturality) This is clear.
  \item (Equivariant localization) Since $\cat{C}$ is closed under
    fiber products over $\Spec \bC$, take $\fF_{\phi \boxtimes \psi}
    \coloneqq \fF_\phi \times \fF_\psi$ and
    \[ \fix_{\phi \boxtimes \psi} \coloneqq \fix_\phi \times \fix_\psi\colon \fF_{\phi \boxtimes \psi} \to \fX \times \fX', \]
    so that $(\phi \boxtimes \psi)^{\sT} \coloneqq \phi^{\sT}
    \boxtimes \psi^{\sT}$. In particular, $(\phi \boxtimes \psi)^\sT_S
    = \psi_S^\sT \circ \phi_{\fF_\psi \times S}^\sT$.
  \item (Finiteness) Since $\phi_{\fF_\psi \times S}^\sT$ already
    annihilates sufficiently high powers of $I^\circ(\fF_\psi \times
    S)$, so does the composition $(\phi \boxtimes \psi)^\sT_S$.
    \qedhere
  \end{enumerate}
\end{proof}

\subsubsection{}

\begin{definition} \label{def:k-homology-theory}
  View $\bK_\circ^\sT(-)$
  (Definition~\ref{def:operational-k-homology}) as a covariant functor
  from $\cat{Art}_\sT$ into $\bk_\sT$-modules via
  \eqref{eq:k-homology-pushforward}. A {\it $\sT$-equivariant
    operational K-homology theory} $\bfK_\circ^\sT(-) =
  \bfK_\circ^\sT(-; \cat{C})$ is a sub-functor
  \[ \bfK_\circ^\sT(-) \subset \bK_\circ^\sT(-) \]
  which is closed under the cap product \eqref{eq:k-homology-cap-product}
  and the external tensor product \eqref{eq:k-homology-boxtimes}.
  Similarly, use $\bK_\circ^\sT(-)_{\loc}$ to define the notion of
  {\it localized} K-homology theory.

  We say $\bfK_\circ^\sT(-)$ is a {\it commutative} K-homology theory
  (see \cite[\S 2.2]{Fulton1981}) if for any $\fX, \fX' \in
  \cat{Art}_\sT$ and $\phi \in \bfK_\circ^\sT(\fX)$ and $\psi \in
  \bfK_\circ^\sT(\fX')$,
  \[ \phi \boxtimes \psi = \psi \boxtimes \phi. \]
  In other words, the square
  \eqref{eq:operational-k-homology-boxtimes} commutes for any base $S
  \in \cat{Art}_\sT$.
\end{definition}

\subsubsection{}

\begin{definition}[``Concrete'' K-homology] \label{def:concrete-k-homology}
  Given $\fX \in \cat{Art}_\sT$, define $K_\circ^\sT(\fX)_{\loc}$ to
  be the set of all triples $\phi = (Z_\phi, \fix_\phi, \cF_\phi)$
  where:
  \begin{itemize}
  \item $Z_\phi \in \cat{C}$ and is a proper scheme, e.g. $Z_\phi$ is
    a projective scheme (see \S\ref{sec:k-homology-support-category});
  \item $\fix_\phi\colon Z_\phi \to \fX$ is a $\sT$-equivariant
    morphism for the trivial $\sT$-action on $Z_\phi$;
  \item $\cF_\phi \in K_\sT(Z_\phi)_{\loc}$ is a K-theory element.
  \end{itemize}
  Equip it with the group operation where $\fix_{\phi+\psi}$ is the
  obvious map from $Z_{\phi+\psi} \coloneqq Z_\phi \sqcup Z_\psi$ and
  $\cF_{\phi+\psi} \coloneqq \cF_\phi + \cF_\psi$. For the remainder
  of this paper, $K_\circ^\sT(\fX)_{\loc}$ will denote its image
  inside $\bK_\circ^\sT(\fX)_{\loc}$, defined by the following
  Proposition~\ref{prop:actual-k-homology}.
\end{definition}

\subsubsection{}

\begin{proposition} \label{prop:actual-k-homology}
  $K_\circ^\sT(-)_{\loc}$ defines a localized commutative K-homology
  theory (Definition~\ref{def:k-homology-theory}). Furthermore
  $K_\circ^\sT([*/\bC^\times])_{\loc} =
  \bK_\circ^\sT([*/\bC^\times])_{\loc}$.
\end{proposition}

\begin{proof}
  Let $\phi = (Z_\phi, \fix_\phi, \cF_\phi) \in
  K_\circ^\sT(\fX)_{\loc}$. Given a base $S \in \cat{Art}_\sT$, define
  \begin{align*}
    \phi_S\colon K^\circ_\sT(\fX \times S)_{\loc} &\to K^\circ_\sT(S)_{\loc} \\
    \cE &\mapsto (\pi_S)_*(\pi_Z^*\cF_\phi \otimes (\fix_\phi \times \id)^*(\cE))
  \end{align*}
  where $\pi_Z$ and $\pi_S$ are the projections from $Z_\phi \times S$
  to the $Z_\phi$ and $S$ factors respectively. Indeed, this defines
  an element $\phi_S(\cE)$ of $K^\circ_\sT(S)_{\loc}$ because $\cE$ is
  perfect, $\pi_Z^*\cF_\phi$ is flat over $S$, and the pushforward is
  proper. Also, $\phi_S$ is $K_\sT^\circ(S)_{\loc}$-linear by the
  projection formula. We verify the K-homology axioms for
  $\{\phi_S\}_S$.
  \begin{enumerate}
  \item (Naturality) Consider the commutative diagram
    \[ \begin{tikzcd}
      & \fX \times S \ar{rr}{\id \times h} && \fX \times S' \\
      Z_\phi \times S \ar{ur}{\fix_\phi \times \id} \ar{rr}{\id \times h} \ar{dr}[swap]{\pi_S} && Z_\phi \times S' \ar{ur}[swap]{\fix_\phi \times \id} \ar{dr}{\pi_{S'}} \\
      & S \ar{rr}{h} && S'.
    \end{tikzcd} \]
    The lower square is cartesian and also Tor-independent because the
    projection $\pi_{S'}$ is flat. The desired statement follows from
    base change.

  \item (Equivariant localization) Take $\fF_\phi = Z_\phi$ with the
    morphism $\fix_\phi$, so that
    \[ \phi_S^\sT(\cE) \coloneqq (\pi_S)_*(\pi_Z^*\cF_\phi \otimes \cE). \]

  \item (Finiteness) Let $S$ be any scheme, so $Z_\phi \times S$ is
    still a scheme. By
    Lemma~\ref{lem:augmentation-ideal-vanishing-for-schemes},
    \[ \pi_Z^*\cF_\phi \otimes I^\circ(Z_\phi \times S)^{\otimes N} = 0 \in K_\sT(Z_\phi \times S) \]
    for all $N \gg 0$. Hence $\phi_S^\sT(I^\circ(Z_\phi \times
    S)^{\otimes N}) = 0$.

  \item (Commutativity) Consider the commutative diagram
    \[ \begin{tikzcd}
      && Z_\phi \times Z_\psi \times S \ar{dr}{\pi_{Z_\psi \times S}} \ar{dl}[swap]{\id \times \fix_\psi \times \id} \\
      & Z_\phi \times \fX' \times S \ar{dl}[swap]{\fix_\phi \times \id \times \id} \ar{dr}{\pi_{\fX' \times S}} && Z_\psi \times S \ar{dl}[swap]{\fix_\psi \times \id} \ar{dr}{\pi_S} \\
      \fX \times \fX' \times S && \fX' \times S && S
    \end{tikzcd} \]
    where, again, the square is cartesian and Tor-independent. Base
    change shows that
    \begin{equation} \label{eq:k-homology-tensor-product}
      (\phi \boxtimes \psi)_S(\cE) = (\pi_S)_*\left(\pi_{Z_\phi}^*(\cF_\phi) \otimes \pi_{Z_\psi}^*(\cF_\psi) \otimes (\fix_\phi \times \fix_\psi \times \id)^*(\cE)\right)
    \end{equation}
    where $\pi_{Z_\phi}$, $\pi_{Z_\psi}$ and $\pi_S$ are the
    projections from $Z_\phi \times Z_\psi \times S$. Interchanging
    $\phi$ and $\psi$, the right hand side remains unchanged.
  \end{enumerate}
  So indeed $K_\circ^\sT(-)_{\loc} \subset \bK_\circ^\sT(-)_{\loc}$.
  It is a sub-functor by defining $f_*\phi \coloneqq (Z_\phi, f \circ
  \fix_\phi, \cF_\phi)$. It is closed under cap product by defining
  $\phi \cap \cF \coloneqq (Z_\phi, \fix_\phi, \cF_\phi \otimes
  \fix_\phi^*\cF)$. It is closed under external tensor product by
  \eqref{eq:k-homology-tensor-product}, i.e. by defining $\phi
  \boxtimes \psi \coloneqq (Z_\phi \times Z_\psi, \fix_\phi \times
  \fix_\psi, \cF_\phi \boxtimes \cF_\psi)$. Finally, the proof of
  Proposition~\ref{prop:operational-k-homology-BGm} shows that
  $K_\circ^\sT([*/\bC^\times])_{\loc} =
  \bK_\circ^\sT([*/\bC^\times])_{\loc}$.
\end{proof}

\subsubsection{}
\label{sec:universal-invariants-shorthand}

It is clear from the proof of Proposition~\ref{prop:actual-k-homology}
that we view an element $\phi = (Z_\phi, \fix_\phi, \cF_\phi) \in
K_\circ^\sT(\fX)_{\loc}$ as an operator
\[ \phi = \chi\left(Z_\phi, \cF_\phi \otimes \fix_\phi^*(-)\right)\colon K_\sT^\circ(\fX)_{\loc} \to \bk_{\sT,\loc} \]
which behaves well with respect to base change. Hence we adopt this
shorthand notation for elements in $K_\circ^\sT(-)$. For instance,
\eqref{eq:k-homology-tensor-product} says
\begin{equation} \label{eq:universal-invariants-tensor-product}
  \phi \boxtimes \psi = \chi\left(Z_\phi \times Z_\psi, \left(\cF_\phi \boxtimes \cF_\psi\right) \otimes \left(\fix_\phi \times \fix_\psi\right)^*(-)\right).
\end{equation}

\subsection{Example: \texorpdfstring{$[*/G]$}{[*/G]}}
\label{sec:operational-k-homology-BG}

\subsubsection{}

Let $G$ be a linear algebraic group and $\fX \coloneqq [*/G]$ with
trivial $\sT$-action. In this subsection, let $\bK_\circ^\sT(\fX) =
\bK_\circ^\sT(\fX; \cat{C})$ denote the ``abstract'' K-homology group
of Definition~\ref{def:operational-k-homology} where we take $\cat{C}$
to be the category of projective schemes; this is the smallest
possible and therefore the most restrictive choice. The first goal of
this subsection is to more concretely describe $\bK_\circ^\sT(\fX)$.
The second goal is a careful study of the case $G = \bC^\times$, which
will be relevant in \S\ref{sec:mVOA-on-K-homology}.

Note that Artin stacks of the form $\fX = [*/G]$ are special because,
for any $S \in \cat{Art}_\sT$, they satisfy the K\"unneth property
\begin{equation} \label{eq:kunneth}
  K^\circ_\sT(\fX \times S) = K^\circ_\sT(\fX) \otimes_{\bk_\sT} K^\circ_\sT(S).
\end{equation}

\subsubsection{}

\begin{theorem} \label{thm:operational-k-homology-BG}
  Let $I \subset R(G)$ be the augmentation ideal of rank-$0$ virtual
  representations. Then
  \begin{equation} \label{eq:operational-k-homology-BG}
    \bK_\circ^\sT(\fX) \subset \Hom_{\mathrm{cts}}\left(R(G)^\wedge_I, \bZ\right) \otimes_{\bZ} \bk_\sT
  \end{equation}
  where $\Hom_{\mathrm{cts}}$ means continuous $\bZ$-module
  homomorphisms, for the $I$-adic topology on $R(G)$ and the discrete
  topology on $\bZ$.
\end{theorem}

By the definition of $I$-adic topology and continuity, the completion
$(-)^\wedge_I$ in \eqref{eq:operational-k-homology-BG} may be removed
with no effect. However, the completion is the more geometrically
natural object: $R(G)^\wedge_I \cong K_{\cat{Top}}^0(BG)$, the zeroth
topological K-group of the classifying space $BG$ \cite{Atiyah1969}.

The $I$-adic topology on $R(G)$ agrees with the topology induced by
many other filtrations \cite{Karpenko2021}.

\subsubsection{}

\begin{example} \label{ex:k-homology-BGm}
  Let $G = \bC^\times$. Then $I = \inner{1-s} \subset \bZ[s^\pm] =
  R(\bC^\times)$, and
  \[ R(\bC^\times)^\wedge_I = \bZ\pseries{1-s}. \]
  Indeed, $B\bC^\times = BU(1) = \bC\bP^\infty$ and
  $K_{\cat{Top}}^0(B\bC^\times) = \bZ\pseries{1-s}$. Let $\{\xi^k\}_k$
  be the dual basis to $\{(1 - s)^\ell\}_\ell$. Explicitly,
  \[ \xi^k(f) = \frac{1}{k!} \frac{\di^k f}{\di (1-s)^k} \bigg|_{s=0} = \frac{(-1)^k}{k!} \frac{\di^k f}{\di s^k} \bigg|_{s=1}. \]
  It follows that
  \begin{equation} \label{eq:k-homology-BGm}
    \Hom_{\mathrm{cts}}(R(\bC^\times)^\wedge_I, \bZ) = \bZ[\xi], \qquad \xi^k(s^n) = (-1)^k \binom{n}{k}.
  \end{equation}
\end{example}

\subsubsection{}

\begin{proof}[Proof of Theorem~\ref{thm:operational-k-homology-BG}.]
  Consider the naturality axiom. The K\"unneth property
  \eqref{eq:kunneth}, along with Hom-tensor adjunction, implies
  \[ \Hom_{K^\circ_\sT(S)}(K^\circ_\sT(\fX \times S), K^\circ_\sT(S)) = \Hom_{\bk_\sT}(K^\circ_\sT(\fX), K^\circ_\sT(S)). \]
  Given an element $\phi = \{\phi_S\} \in \bK_\circ^\sT(\fX)$, each
  $\phi_S$ is therefore equivalent to a $\bk_\sT$-linear map
  \[ \bar\phi_S\coloneqq \phi_S \circ (\id_\fX \times h)^*\colon K^\circ_\sT(\fX) \to K^\circ_\sT(S) \]
  where $h\colon S \to *$ is the structure morphism. By naturality,
  all $\bar\phi_S$ may therefore be identified with the one for $S =
  *$, which is an element of
  \[ \Hom_{\bk_\sT}(K^\circ_\sT(\fX), \bk_\sT) = \Hom_\bZ(K^\circ(\fX), \bk_\sT) = \Hom_\bZ(K^\circ(\fX), \bZ) \otimes_{\bZ} \bk_\sT. \]
  The first equality follows from the triviality of the $\sT$-action
  on $\fX$, along with Hom-tensor adjunction, and the second equality
  follows from $\bk_\sT$ being a free $\bZ$-module. Hence, without
  loss of generality, we may consider $\bar\phi \in
  \Hom_{\bZ}(K^\circ(\fX), \bZ)$, with $\phi_S = \bar\phi \otimes
  \id_{K^\circ_\sT(S)}$.

  Clearly $K^\circ(\fX) = K^\circ_\sG(*) = K_\sG(*) = R(G)$ is the
  representation ring of $G$, and $I^\circ(\fX) \subset R(G)$ is the
  augmentation ideal $I$.

  Consider the finiteness axiom. Since both $\fX$ and $S \in \cat{C}$
  have trivial $\sT$-action, $I^\circ(\fX \times S)$ is well-defined
  and $(\fix_\phi \times \id)^*I^\circ(\fX \times S) \subset
  I^\circ(\fF_\phi \times S)$. The finiteness axiom therefore implies
  \begin{equation} \label{eq:X-finiteness-condition}
    \phi_S\left(I^\circ(\fX \times S)^{\otimes N}\right) = 0 \qquad \forall N \gg 0.
  \end{equation}
  In particular this holds for $S = *$ and $\bar\phi$. Put the
  $I^\circ(\fX)$-adic topology on $K^\circ(\fX)$ and the discrete
  topology on $\bZ$. We claim \eqref{eq:X-finiteness-condition} is the
  condition that $\bar\phi$ is continuous. Namely, continuity means
  the pre-image of the open set $\{k\} \subset \bZ$ must still be
  open, i.e.
  \[ w \in \bar\phi^{-1}(k) \implies w + I^\circ(\fX)^{\otimes N} \in \bar\phi^{-1}(k) \;\; \forall N \gg 0. \]
  But $\bar\phi$ is linear, so $\bar\phi(w + I^\circ(\fX)^{\otimes N})
  = \bar\phi(w)$ iff \eqref{eq:X-finiteness-condition} holds for $S = *$.
\end{proof}

\subsubsection{}
\label{sec:operational-k-homology-BGm}

\begin{proposition} \label{prop:operational-k-homology-BGm}
  For $G = \bC^\times$, \eqref{eq:operational-k-homology-BG} is an
  equality:
  \[ \bK_\circ^\sT([*/\bC^\times]) = \Hom_{\mathrm{cts}}(R(\bC^\times)^\wedge_I, \bZ) \otimes_{\bZ} \bk_\sT \stackrel{\eqref{eq:k-homology-BGm}}{=} \bk_\sT[\xi]. \]
\end{proposition}

\begin{proof}
  Apply Theorem~\ref{thm:operational-k-homology-BG} to obtain that
  \[ \bK_\circ^\sT([*/\bC^\times]) \subset \Hom_{\mathrm{cts}}(R(\bC^\times)^\wedge_I, \bZ) \otimes_\bZ \bk_\sT. \]
  We must show the reverse inclusion by checking the localization and
  finiteness axioms for
  \[ \xi^k_S \coloneqq \xi^k \otimes \id_{K^\circ_\sT(S)} \colon K([*/\bC^\times]) \otimes_\bZ K^\circ_\sT(S) \to K^\circ_\sT(S) \]
  for all $k \ge 0$ and $S \in \cat{C}$. Write $K([*/\bC^\times]) =
  \bZ[s^\pm]$ and recall that $\xi^k(s^n) = (-1)^k \binom{n}{k}$.
  Define
  \[ \fix_{\xi^k}\colon \bP_{\bC}^k = (\bC^{k+1} \setminus \{0\}) / \bC^\times \hookrightarrow [\bC^{k+1}/\bC^\times] \to [*/\bC^\times] \]
  to be the natural open embedding followed by projection, set
  $\fF_{\xi^k} \coloneqq \bP_{\bC}^k$, and let
  \[ (\xi^k_S)^\sT \coloneqq (-1)^k \cdot (\pi_S)_*(\cO(-k) \otimes -)\colon K_\sT^\circ(\fF_{\xi^k} \times S) \to K_\sT^\circ(S). \]
  Since $\fix_k^* s^n = \cO_{\bP_{\bC}^k}(n)$ and $\chi(\bP^k, \cO(n))
  = \binom{n+k}{k}$, indeed
  \[ \xi^k_S = (\xi^k_S)^\sT \circ (\fix_{\xi^k} \times \id)^* \]
  as desired. Finally, the proof of
  Proposition~\ref{prop:actual-k-homology} shows that
  $\{(\xi^k_S)^{\sT}\}_S$ is indeed a collection of homomorphisms of
  $K_\sT^\circ(S)$-modules which satisfy the naturality and finiteness
  axioms.
\end{proof}

\subsubsection{}

\begin{remark}
  The appearance of $\bP_{\bC}^k$ in the proof of
  Proposition~\ref{prop:operational-k-homology-BGm} is an instance of
  the following more general phenomenon. For any connected solvable
  $G$, there exists \cite[Theorems 2.1, 2.2]{Edidin2000} an inverse
  system of pairs $(U, V)$ such that $V$ is a vector space, $U \subset
  V$ has high(er and higher) codimension, $G$ acts freely on $U
  \subset V$, and
  \[ R(G)^\wedge_I = \varprojlim_{(U,V)} K(U/G) \]
  Hence every continuous homomorphism $R(G)^\wedge_I \to \bZ$ factors
  through $K(U/G)$ for some $U$ such that $U/G$ is a scheme. For $G =
  \bC^\times$, we used the pairs $(\bC^k \setminus \{0\}, \bC^k)$.

  It is possible that such ideas may be applied to generalize the
  argument of Proposition~\ref{prop:operational-k-homology-BGm}, to
  show that \eqref{eq:operational-k-homology-BG} is an equality for
  more general $G$.
\end{remark}

\subsubsection{}

\begin{remark}
  In contrast to $K^\circ([X/G]) = K^\circ_G(X)$, it is easy to check,
  using Proposition~\ref{prop:operational-k-homology-BGm} and the
  proof of Theorem~\ref{thm:operational-k-homology-BG}, that
  \[ \bZ[\xi] = \bK_\circ([*/\bC^\times]) \neq \bK_\circ^{\bC^\times}(*) = \bk_{\bC^\times}. \]
  This is another (lack of a) property shared by equivariant
  K-homology and reasonable definitions of equivariant homology of
  Artin stacks; see e.g. \cite[\S 2.2]{Joyce2021}, where $H_*^G(X)
  \neq H_*([X/G])$ in general.
\end{remark}

\subsubsection{}

The multiplication map $\Omega\colon [*/G] \times [*/G] \to [*/G]$ makes
$[*/G]$ (resp. $BG$) into a group object in Artin stacks (resp.
H-spaces). Thus $K^\circ([*/G]) = R(G)$ is a Hopf algebra and
therefore its continuous dual is too. For completeness, when $G =
\bC^\times$, we explicitly compute its product/coproduct, continuing
with the notation of Example~\ref{ex:k-homology-BGm}.
\begin{itemize}
\item Recall that $\xi^k = \frac{(-1)^k}{k!} \di_s^k\big|_{s=0}$. The
  Leibniz rule $\xi^k(fg) = \sum_{i+j=k} \xi^i(f) \xi^j(g)$ induces
  the coproduct (cf. \eqref{eq:operational-K-homology-coproduct})
  \[ \Delta(\xi^k) = \sum_{i+j=k} \xi^i \otimes \xi^j. \]

\item Let $\deg_s\colon R(\bC^\times) = \bZ[s^\pm] \to \bZ$ be the
  degree-in-$s$ homomorphism. Viewing $\xi^k = (-1)^k
  \binom{\deg_s}{k}$ as polynomials in $\deg_s$, the coproduct
  $\Omega^*\colon s \mapsto s \boxtimes s$ on $R(\bC^\times)$ induces
  the ordinary product of polynomials in $\deg_s$. In other words,
  \begin{equation} \label{eq:operational-k-homology-BGm-as-numerical-polynomial}
    \bZ[\xi] \subset \bQ[\deg_s]
  \end{equation}
  is the subalgebra of numerical polynomials in $\deg_s$. We denote
  this product on $\bZ[\xi]$ by $\star$. It is distinct from the
  standard product, and a nice combinatorial exercise is that
  \[ \xi^a \star \xi^b = \sum_{k=0}^{a+b} (-1)^k \binom{a+b-k}{a} \binom{a}{k} \xi^{a+b-k}. \]
\end{itemize}

\subsubsection{}

\begin{proposition}
  Let $K^\circ(\bC\bP^\infty) \coloneqq \varprojlim_n K^\circ(\bP_{\bC}^n)$ and
  \[ \ch\colon K^\circ(\bC\bP^\infty) = \bZ[[1-s]] \xrightarrow{s \mapsto e^x} \bQ[[x]] = H^*(\bC\bP^\infty)_\bQ \]
  be the Chern character induced by the usual Chern characters
  $\ch\colon K(\bP_{\bC}^n) \to H^*(\bC\bP^n)_\bQ$. Let $\zeta \in
  H_2(\bC\bP^n)$ be the class dual to $x$. Then, passing to duals,
  $\ch$ induces an inclusion
  \[ \ch_*\colon \bK_\circ([*/\bC^\times]) \subset H_*(\bC\bP^\infty)_\bQ = \bQ[\zeta] \]
  given by $\xi^k \mapsto (-1)^k \binom{\zeta}{k}$.
\end{proposition}

This gives an interpretation of
\eqref{eq:operational-k-homology-BGm-as-numerical-polynomial} as a
homology Chern character.

Note that $\ch \otimes \bQ$ is no longer an
isomorphism, as $\bQ \otimes_{\bZ} \bZ[[x]] \neq \bQ[[x]]$.

\begin{proof}
  We claim that the correct product on $\zeta$ is given by
  \[ \zeta^m(x^n) = \delta_{mn} n!. \]
  This follows from the computation
  \[ \Omega_*(\zeta^a \boxtimes \zeta^b)(x^k) = (\zeta^a \boxtimes \zeta^b)(\Omega^*(x)^k) = \sum_{i+j=k} \binom{k}{i} \zeta^a(x^i) \zeta^b(x^j) = \delta_{k,a+b} k! = \zeta^{a+b}(x^k). \]
  It remains to check that $\xi^k$ and $(-1)^k \binom{\zeta}{k}$ both
  act the same way on $s^n \mapsto e^{nx}$ for $n \ge 0$. Since
  $\zeta^k(e^{nx}) = n^k$, it follows that
  \[ (-1)^k \binom{\zeta}{k}(e^{nx}) = (-1)^k \binom{n}{k} = \xi^k(s^n). \qedhere \]
\end{proof}

\subsubsection{}

\begin{remark}
  The Hopf algebra $(\bK_\circ([*/\bC^\times]), \star, \Delta)$ is
  exactly the $\lambda$-divided power Hopf algebra of
  \cite{Andrews2003} for $\lambda=-1$ after a mild change of basis.
  Our construction ascribes geometric meaning to their purely
  algebraic definition and provides an alternate proof of their main
  theorem. In general $\lambda$ is the constant in the $1$-dimensional
  formal group law $F_\lambda(x, y) \coloneqq x + y + \lambda xy$.
  There is a well-understood correspondence between formal group laws,
  their associated generalized cohomology theories $E(-)$, and Hopf
  algebra structures on an appropriate dual of $E(\bC\bP^\infty)$
  \cite[Part II]{Adams1974}.

  Degenerating to $\lambda=0$ produces $H_*(\bC\bP^\infty)$, which is
  an ordinary divided power Hopf algebra generated by the divided
  powers $\zeta^{[k]} \coloneqq \zeta^k/k!$, e.g.
  \[ \zeta^{[a]} \zeta^{[b]} = \binom{a+b}{a} \zeta^{[a+b]}. \]
  The map $\ch_*$ is a Hopf algebra morphism, and becomes an
  isomorphism only after base change to $\bQ$. Indeed, as $F_{-1}$ and
  $F_0$ are non-isomorphic group laws, the $\lambda=-1$ and
  $\lambda=0$ divided power Hopf algebras are not isomorphic over
  $\bZ$.
\end{remark}

\section{Equivariant multiplicative vertex algebras}
\label{sec:equivariant-mVOA}

\subsection{Series and residues}
\label{sec:series-and-residues}

\subsubsection{}
\label{sec:equivariant-series-rings}

Let $\sT$ be a split algebraic torus, with character lattice denoted
$\Char(\sT)$ and representation ring $\bk_\sT = \bZ[t^\mu : \mu \in
  \Char(\sT)]$. For formal variables $z_1, \ldots, z_m$ and a
$\bk_\sT$-module $M$, define
\begin{equation} \label{eq:equivariant-polynomial-ring}
  M\left[(1 - z_1 \cdots z_m)^{-1}\right]_\sT \coloneqq M\left[(1 - t^\mu z_1^{i_1} \cdots z_m^{i_m})^{-1} : \begin{array}{c} \mu \in \Char(\sT) \\ i_1,\ldots,i_m \in \bZ \setminus \{0\}\end{array}\right]
\end{equation}
Similarly define $M[(1-z_1\cdots z_m)^\pm]_\sT$. Let
\[ M\left[(1-z_1 \cdots z_m)^{-1}\right]_\sT^{(1)} \subset M\left[(1-z_1 \cdots z_m)^{-1}\right]_\sT \]
denote the submodule given by \eqref{eq:equivariant-polynomial-ring}
but with $\bZ \setminus \{0\}$ replaced by $\{1, -1\}$. Similarly
define $M[(1-z_1\cdots z_m)^\pm]_\sT^{(1)}$. In the case of a single
variable $z$, let
\[ M\lseries*{1 - z}_{\sT} \coloneqq M\pseries*{1-z}\left[(1 - z)^{-1}\right]_\sT. \]
and similarly define $M\lseries{1-z}_{\sT}^{(1)}$.

If $M$ is a ring, then all these modules are also rings under the
usual multiplication of polynomials and power series.

\subsubsection{}

In what follows, we give some elementary constructions on ordinary
polynomial/series rings over $\bZ = \bk_{\{1\}}$, with the
understanding that they apply equally well to the $\bk_\sT$-modules
$M$ of \S\ref{sec:equivariant-series-rings}, for non-trivial $\sT$,
after an appropriate base change. For instance, we have the obvious
embeddings
\[ \bZ\pseries*{1-z} \hookrightarrow \bZ\lseries*{1-z} \hookrightarrow \bZ\lseries*{1-z}_{\{1\}}, \]
and furthermore we often implicitly use the embedding
\[ \iota_z\colon \bZ\left[z^\pm\right] \hookrightarrow \bZ\pseries*{1-z}, \qquad z^k \mapsto (1 - (1-z))^k. \]
Note that $1 - (1-z) \in \bZ\pseries{1-z}$ is invertible, so this is
well-defined for all $k \in \bZ$.

\subsubsection{}

\begin{lemma}
  For non-zero $i, j \in \bZ$, the unique ring homomorphism
  $\Lambda_{i,j}$ such that
  \[ \begin{tikzcd}
    & \bZ\left[z^{\pm ij}\right] \ar[hookrightarrow]{dr}{\iota_{z^i}} \ar[hookrightarrow]{dl}[swap]{\iota_{z^{ij}}} \\
    \bZ\pseries*{1-z^{ij}} \ar{rr}{\Lambda_{i,j}} && \bZ\pseries*{1-z^i}
  \end{tikzcd} \]
  commutes is injective and is an isomorphism if and only if $j \in
  \{1, -1\}$.
\end{lemma}

We often implicitly apply $\Lambda_{i,j}$, particularly the
isomorphism $\Lambda_{-1,-1}$, in
\S\ref{sec:equivariant-mVOA-general-theory}.

\begin{proof}
  Commutativity implies
  \[ \Lambda_{i,j}\colon 1 - z^{ij} \mapsto 1 - (1 - (1 - z^i))^j = (1 - z^i)(j + \cdots) \]
  where $\cdots$ denotes terms of positive degree in $1 - z^i$. The
  image is clearly non-zero. As for invertibility, $(j + \cdots)$ is a
  unit if and only if $j \in \bZ$ is invertible.
\end{proof}

\subsubsection{}

\begin{definition}
  For two formal variables $z$ and $w$, the {\it (multiplicative)
    expansion}
  \[ \iota_{z,w}\colon \bZ\left[(1 - zw)^\pm\right] \hookrightarrow \bZ\left[(1-w)^\pm\right]\pseries*{1 - z} \]
  is the injective ring homomorphism given by
  \begin{align*}
    \iota_{z,w}(1 - zw)^k
    &\coloneqq \left((1-z) + (1-w) - (1-z)(1-w)\right)^k \\
    &= (1-w)^k \left(1 - (1-z)(1 - (1-w)^{-1})\right)^k.
  \end{align*}
  Note that $1 - (1-z)(1 - (1-w)^{-1})$ is invertible, so this is
  well-defined for all $k \in \bZ$. The name is because, analytically,
  $\iota_{z,w}$ can be viewed as series expansion in the domain $|1-z|
  < |1-w^{-1}|$.

  This is the multiplicative analogue of the additive expansion
  $\bZ[(u - v)^{-1}] \hookrightarrow \bZ[v^{-1}]\pseries{u}$ given by
  series expansion in the domain $|u| < |v|$, i.e. in non-negative
  powers of $u/v$.
\end{definition}

\subsubsection{}

\begin{definition} \label{def:equivariant-expansion}
  The {\it (multiplicative) equivariant expansion}
  \[ \iota_{z,w}^\sT\colon \bk_\sT\left[(1 - zw)^\pm\right]_\sT \to \bk_\sT\left[(1-w)^\pm\right]_\sT\pseries*{1-z}. \]
  is the $\bk_\sT$-algebra homomorphism given by applying the
  expansion
  \[ \iota_{z^i,t^\mu w^j}\colon \bk_\sT\left[(1 - t^\mu z^i w^j)^\pm\right] \to \bk_\sT\left[(1 - t^\mu w^j)^\pm\right]\pseries*{1 - z^i} \]
  to the monomial $(1 - t^\mu z^i w^j)^k$, and using the embedding
  $\Lambda_{1,i}\colon \bZ\pseries{1-z^i} \hookrightarrow \bZ\pseries{1-z}$.
\end{definition}

\subsubsection{}

\begin{definition} \label{def:k-theoretic-residue-map}
  Given a rational function $f \in \bk_\sT[(1-z)^\pm]_\sT$, let $f_+ \in
  \bk_\sT\lseries{z}$ and $f_- \in \bk_\sT\lseries{z^{-1}}$ be its formal
  series expansion around $z=0$ and $z=\infty$ respectively. The {\it
    (equivariant) K-theoretic residue map} is the $\bk_\sT$-module
  homomorphism
  \begin{align*}
    \rho_K\colon \bk_\sT[(1-z)^\pm]_\sT &\to \bk_\sT \\
    f &\mapsto z^0 \text{ term in } (f_- - f_+).
  \end{align*}
  See Appendix~\ref{sec:residue-maps} for a discussion of residue maps
  in general and a characterization of this one. By construction,
  $\rho_K\colon \bk_\sT[z^\pm] \mapsto 0$, and it has the {\it
    $\sT$-invariance} property that
  \begin{equation} \label{eq:residue-map-T-invariance}
    \rho_K\left(f\big|_{z \mapsto tz}\right) = \rho_K(f),
  \end{equation}
  for any $f$ and $\sT$-weight $t$, because such a substitution does
  not affect the $z^0$ term.
  
  When multiple variables are present, write $\rho_{K,z}$ to
  mean $\rho_K$ applied in the variable $z$. 
\end{definition}

\subsubsection{}

\begin{lemma} \label{lem:residue-map-on-series}
  $\rho_K$ extends to a $\bk_\sT$-module homomorphism
  \[ \rho_K\colon \bk_\sT\lseries*{1-z}_\sT^{(1)} \to \bk_\sT^\wedge \]
  where $\bk_\sT^\wedge$ is the completion of $\bk_\sT$ at the
  augmentation ideal $\inner{1 - t^\mu : \mu \in \Char(\sT)}$.
\end{lemma}

\begin{proof}
  It suffices to show, by direct calculation, that if $N \ge n$ then
  \begin{equation} \label{eq:residue-map-on-filtration}
    \rho_K\frac{(1 - z)^N}{\prod_{i=1}^n (1 - z^{k_i} t^{\mu_i})} \in I^{N-n+1}
  \end{equation}
  where $k_i \in \{1, -1\}$ and $\mu_i \in \Char(\sT)$. Since all
  $t^{\mu_i}$ are invertible, it is equivalent to show that $\rho_K f_a
  \in I^{N-n+1}$, where $a = \#\{i : k_i = 1\}$ and
  \begin{equation} \label{eq:residue-map-on-filtration-1}
    f_a \coloneqq \frac{z^{-a} (1 - z)^N}{\prod_{i=1}^n (1 - z^{-1} t^{\mu_i})}.
  \end{equation}
  Multiplying the top and bottom by $z^n$, clearly the $z^0$ term of
  $(f_a)_+$ vanishes unless $a = n$, in which case it is $(-1)^n
  \prod_{i=1}^n t^{-\mu_i}$. On the other hand,
  \begin{align*}
    (f_a)_-
    &= (-1)^N z^{N-a} (1 - z^{-1})^N \sum_{k \ge 0} (-1)^k z^{-k} (1 - z^{-1})^{-k-n} h_k(1 - t^\mu) \\
    &= (-1)^N z^{N-a} \sum_{k \ge 0} \sum_{l \ge 0} (-1)^{k+l} \binom{N-k-n}{l} z^{-k-l} h_k(1 - t^\mu)
  \end{align*}
  where $h_k(1 - t^\mu) \in I^k$ is the $k$-th complete homogeneous
  polynomial in $\{1 - t^{\mu_i}\}_{i=1}^n$. Hence
  \[ z^0 \text{ term in } (f_a)_- = (-1)^a \sum_{k=0}^{N-a} \binom{N-n-k}{N-a-k} h_k(1 - t^\mu). \]
  \begin{itemize}
  \item If $0 \le a < n$, then this is $0 \bmod{I^{N-n+1}}$ because
    $\binom{N-n-k}{N-a-k} = 0$ for $k \le N-n$.
  \item If $a = n$, then this cancels with the $z^0$ term of $(f_a)_+$
    because
    \[ (-1)^n \sum_{k=0}^{N-n} h_k(1 - t^\mu) \equiv (-1)^n \sum_{k=0}^\infty h_k(1 - t^\mu) = (-1)^n \prod_{i=1}^n t^{-\mu_i} \bmod{I^{N-n+1}}. \qedhere \]
  \end{itemize}
\end{proof}

\subsubsection{}

\begin{remark}
  We chose to give an elementary but unenlightening proof of
  Lemma~\ref{lem:residue-map-on-series}. Here is a sketch of an
  alternate proof. By Remark~\ref{rem:residue-map-K-theory-comparison}
  and Cauchy's residue theorem, $\rho_K$ can be viewed as a contour
  integral. Then, for functions $f_a$ as in
  \eqref{eq:residue-map-on-filtration-1},
  \[ \rho_K(f_a) = \chi\left(\bP^{n-1}, \cO(-a) \otimes (1 - \cO(1))^N\right)\Big|_{a_i \mapsto t^{\mu_i}} \]
  by Jeffrey--Kirwan integration \cite[Theorem 8.1]{Jeffrey1995}
  \cite[Appendix A]{Aganagic2018}, where $\chi$ denotes equivariant
  Euler characteristic with respect to the maximal torus $\sA
  \coloneqq \{\diag(a_1, \ldots, a_n)\} \subset\GL(n)$. We claim that
  $\chi(\bP^{n-1}, -)$ is $I^\circ_\sA(-)$-adically continuous, i.e.
  \[ \chi(\bP^{n-1}, I^\circ_\sA(\bP^{n-1})^{\otimes N}) \subset I_\sA^\circ(*)^{\otimes M(N)} \]
  for some function $M$ such that $\lim_{N \to \infty} M(N) = \infty$,
  which is all that we need. A high-powered way to prove this is to
  note that the $I_\sA^\circ(\bP^{n-1})$- and $I_\sA^\circ$-adic
  topologies on $\bP^{n-1}$ coincide \cite[Theorem
    6.1(a)]{Edidin2000}, and since $\chi(\bP^{n-1}, -)$ is a
  $\bk_\sA$-module homomorphism, it is clearly continuous for the
  $I^\circ_\sA$-adic topology.
\end{remark}

\subsubsection{}

\begin{remark}
  Note that $\rho_K$ does {\it not} further extend to a homomorphism
  $\bk_\sT\lseries{1-z}_\sT \to \bk_\sT^\wedge$, because
  \eqref{eq:residue-map-on-filtration} fails without the restriction
  that all $k_i \in \{1, -1\}$. The calculation
  \[ \rho_K\frac{(1 - z)^n}{1 - z^2} = \rho_K\frac{(1-z)^{n-1}}{1+z} = -2^{n-1} \notin I \]
  is a good example of the basic underlying issue, which only occurs
  when there are poles in $z$ at non-trivial roots of unity.
\end{remark}

\subsubsection{}

\begin{lemma} \label{lem:residue-map-of-diagonal-expansion}
  Let $f \in \bk_\sT[(1-z)^\pm]_{\sT}$. Then
  \begin{align*}
    \rho_{K,z}\left(\iota_{z,w}^\sT f(z/w)\right) &= 0 \\
    \rho_{K,z}\left(\iota_{w,z}^\sT f(z/w)\right) &= \rho_{K,z} f(z).
  \end{align*}
\end{lemma}

\begin{proof}
  The first equality is clear since $\iota_{z,w}^{\sT} f(z/w)$ as a
  function of $z$ involves only elements of
  $\bk_\sT\pseries{1-z} \subset \ker\rho_{K,z}$. We focus
  on the more interesting second equality.

  The rational function $f$ has finitely many poles $\{z^{k_i}
  t^{\mu_i} = 1\}_i$ where $k_i \in \bZ \setminus \{0\}$ and $\mu_i
  \in \Char(\sT)$. Pass to an $N$-fold cover of $\sT$, i.e. to
  \[ \bk_\sT^{1/N} \coloneqq \bZ[1^{1/N}][t^{\mu/N} : \mu \in \Char(\sT)] \supset \bk_\sT, \]
  with $N \gg 0$ such that all $t^{\mu_i/k_i}$ exist in
  $\bk_\sT^{1/N}$. Let $K_\sT \coloneqq \Frac(\bk_\sT^{1/N})$ be its
  fraction field. Clearly we may compute $\rho_{K,z}$ and all
  equivalent expansions over $K_\sT$ instead of $\bk_\sT$. Since
  $K_\sT[z]$ is a Euclidean domain, there is a partial fraction
  decomposition
  \[ f(z) = f_{\text{reg}}(z) + \sum_{i=1}^m \sum_{j=1}^{e_i} \frac{s_{ij}}{(1 - s_i z)^j} \]
  for some integers $m$ and $e_1, \ldots, e_m$, elements $s_i, s_{ij}
  \in K_\sT$, and a polynomial $f_{\text{reg}} \in K_\sT[z^\pm]$. It
  therefore suffices to prove the lemma for $f(z) = (1 - sz)^{-n}$ for
  any integer $n > 0$ and $s \in K_\sT$. We do this by explicit
  calculation. It is easy to check that
  \[ \rho_{K,z} (1 - sz)^{-n} = -1, \qquad \forall n > 0. \]
  On the other hand, using the binomial theorem,
  \begin{align*}
    \rho_{K,z} \left(\iota_{w,z}^\sT (1 - sz/w)^{-n}\right)
    &= \rho_{K,z} \sum_{k \ge 0} \binom{-n}{k} (1 - w^{-1})^k (w^{-1})^{-n-k} (1 - sz)^{-n-k} \\
    &= -\sum_{k \ge 0} \binom{-n}{k} (1 - w^{-1})^k (w^{-1})^{-n-k} \\
    &= -((1 - w^{-1}) + w^{-1})^{-n} = -1. \qedhere
  \end{align*}
\end{proof}

\subsection{General theory}
\label{sec:equivariant-mVOA-general-theory}

\subsubsection{}

\begin{definition}
  A {\it $\sT$-equivariant multiplicative vertex algebra} is the data
  of:
  \begin{enumerate}
  \item a $\bk_\sT$-module $V$ of {\it states} with a distinguished
    vacuum vector $\vac \in V$;
  \item a multiplicative {\it translation operator} $D(z)
    \in \End(V)\pseries*{1 - z}$, i.e. $D(z)D(w) = D(zw)$;
  \item a {\it vertex product} $Y(-, z)\colon V \otimes V \to
    V\lseries{1-z}_\sT$.
  \end{enumerate}
  This data must satisfy the following axioms:
  \begin{enumerate}
  \item (vacuum) $Y(\vac, z) = \id$ and $Y(a, z)\vac \in V\pseries{1 -
    z}$ with $Y(a,1)\vac = a$;
  \item (skew symmetry) $Y(a, z)b = D(z) Y(b, z^{-1}) a$;
  \item (weak associativity) $Y(Y(a,z) b, w) \equiv Y(a, zw) Y(b, w)$,
    where $\equiv$ means that when applied to any $c \in V$, both
    sides are equivariant expansions of the same element in
    \begin{equation} \label{eq:mvoa-ope-ring}
      V\pseries*{1 - z, 1 - w}\left[(1 - z)^{-1}, (1 - w)^{-1}, (1 - zw)^{-1}\right]_{\sT}.
    \end{equation}
  \end{enumerate}
\end{definition}

For clarity, we refer to the original notion of vertex algebra, see
e.g. \cite[Definition 1.3.1]{Frenkel2004}, as an {\it ordinary vertex
  algebra}. In many aspects, it is a non-equivariant, additive version
of our $\sT$-equivariant multiplicative vertex algebras.

\subsubsection{}

To be precise regarding weak associativity, first observe that, for
$a,b,c \in V$,
\begin{align*}
  Y(Y(a,z)b, w)c &\in V\lseries*{1-w}_\sT\lseries*{1-z}_\sT \\
  Y(a,zw)Y(b,w)c &\in V\lseries*{1-zw}_\sT\lseries*{1-w}_\sT
\end{align*}
so they are not immediately comparable. Weak associativity means to
compare them using the equivariant expansions
\begin{align*}
  \iota_{z,w}^\sT\colon &V\pseries*{1-z, 1-w}\left[(1-z)^{-1}, (1-w)^{-1}, (1-zw)^{-1}\right]_\sT \\
  &\hookrightarrow V\pseries*{1-w}[(1-w)^\pm]_\sT\pseries*{1-z}[(1-z)^{-1}]_\sT = V\lseries*{1-w}_\sT\lseries*{1-z}_\sT, \\
  \iota_{w^{-1},zw}^\sT\colon &V\pseries*{1-zw, 1-w}\left[(1-z)^{-1}, (1-w)^{-1}, (1-zw)^{-1}\right]_\sT \\
  &\hookrightarrow V\pseries*{1-zw}[(1-zw)^\pm]_\sT\pseries*{1-w^{-1}}[(1-w)^{-1}]_\sT = V\lseries*{1-zw}_\sT\lseries*{1-w}_\sT,
\end{align*}
where we identify $\bZ\pseries{1-zw, 1-w} \cong \bZ\pseries{1-z, 1-w}$
via the invertible map $1-zw \mapsto (1-z) + (1-w) + (1-z)(1-w)$.

This is completely analogous to what happens for ordinary vertex
algebras, where the relevant expansions are the ring embeddings
\[ \bZ\lseries{u}\lseries{v} \hookleftarrow \bZ\left[(u-v)^{-1}\right] \hookrightarrow \bZ\lseries{v}\lseries{u}. \]

\subsubsection{}

\begin{definition}
  Let $(V, \vac, D, Y)$ be an equivariant multiplicative vertex algebra.
  \begin{enumerate}
  \item It is {\it holomorphic} if $Y(-, z)$ takes values in the
    sub-module $V\pseries{1-z} \subset V\lseries{1-z}_\sT$.
    Consequently the $\equiv$ in weak associativity becomes an
    equality of series.
  \item It is {\it reduced} if $Y(-, z)$ takes values in the
    sub-module $V\lseries{1-z}_\sT^{(1)} \subset V\lseries{1-z}_\sT$,
    and the $\equiv$ in weak associativity means both sides are
    equivariant expansions of the same element in
    \[ V\pseries*{1 - z, 1 - w}\left[(1 - z)^{-1}, (1 - w)^{-1}, (1 - zw)^{-1}\right]_{\sT}^{(1)}. \]
  \end{enumerate}
\end{definition}

\subsubsection{}

\begin{remark}
  Following \cite{Li2011}, associated to any formal $1$-dimensional
  group law $F = F(u, v)$ is the notion of a {\it vertex $F$-algebra}.
  When $F_a(u, v) \coloneqq u + v$ is the additive formal group law,
  it is exactly an ordinary vertex algebra. When $F_m(u, v) \coloneqq
  u + v + uv$ is the multiplicative formal group law, it is the
  special case of our multiplicative vertex algebras where:
  \begin{enumerate}
  \item $\sT = \{1\}$ is trivial, i.e. there is no equivariance;
  \item poles occur only at $z = 1$ (corresponding to $u = 0$), i.e.
    the vertex algebra is reduced.
  \end{enumerate}
  The latter restriction is unnatural from a geometric perspective,
  where $z$ should be viewed as the K-theoretic weight of some
  (hidden) $\bC^\times$-action, and poles in $z$ arise from
  $\bC^\times$-equivariant localization. Such poles generally exist at
  non-trivial roots of unity as well.

  All formal $1$-dimensional group laws are equivalent over $\bQ$, so
  the notion of vertex $F$-algebras is independent of $F$ after an
  appropriate base change \cite[Theorem 3.7]{Li2011}. For the additive
  and multiplicative group laws, this is roughly equivalent to the
  existence of Riemann--Roch isomorphisms between $K(X)$ and $A_*(X)$
  over $\bQ$. In contrast, when there is non-trivial
  $\sT$-equivariance, typically $K_\sT(X)$ is no longer isomorphic to
  $A_*^\sT(X)$ over $\bQ$ \cite{Edidin2000}.
\end{remark}

\subsubsection{}

\begin{lemma}
  For $a, b \in V$:
  \begin{enumerate}
  \item\label{it:mVOA-translation} (translation) $D(w) Y(a,z) \equiv
    Y(a, zw) D(w)$;
  \item\label{it:mVOA-locality} (locality) $Y(a,z)Y(b,w) \equiv
    Y(b,w)Y(a,z)$.
  \end{enumerate}
\end{lemma}

Later, in Lemmas~\ref{lem:mVOA-translation-expansion} and
\ref{lem:mVOA-OPE}, we refine this lemma by being more specific about
the meaning of $\equiv$, by analyzing the poles on both sides.

\begin{proof}
  This is purely formal. The skew symmetry axiom with $b = \vac$, and
  the vacuum axiom, gives $Y(a,z)\vac = D(z)a$. Then weak
  associativity applied to $\vac$ gives
  \[ Y(a,zw) D(w) b = Y(a,zw) Y(b,w)\vac \equiv Y(Y(a,z)b, w)\vac = D(w) Y(a,z)b. \]
  Similarly, weak associativity with $b = \vac$ gives $Y(D(z)a, w)
  \equiv Y(a, zw)$, also called {\it translation covariance}. Using it
  and weak associativity and skew symmetry,
  \begin{align*}
    Y(a,z) Y(b,w) \equiv Y(Y(a, z/w)b, w)
    &= Y(D(z/w) Y(b, w/z)a, w) \\
    &\equiv Y(Y(b, w/z)a, z) \equiv Y(b,w) Y(a,z). \qedhere
  \end{align*}
\end{proof}


\subsubsection{}

\begin{corollary}[Cousin property, cf. {\cite[Corollary 3.2.3]{Frenkel2004}}] \label{cor:mVOA-cousin}
  For $a, b, c \in V$, the three series
  \begin{align*}
    Y(a,z)Y(b,w)c &\in V\lseries*{1-z}_{\sT}\lseries*{1-w}_{\sT}, \\
    Y(b,w)Y(a,z)c &\in V\lseries*{1-w}_{\sT}\lseries*{1-z}_{\sT}, \\
    Y(Y(a, z/w)b, w)c &\in V\lseries*{1-w}_{\sT}\lseries*{1-z/w}_{\sT},
  \end{align*}
  are expansions of the same element, using $\iota_{w,z}^\sT$,
  $\iota_{z,w}^\sT$, and $\iota_{z/w,w}^\sT$ respectively.
\end{corollary}

\begin{proof}
  Follows from locality (Lemma~\ref{it:mVOA-locality}) and the weak
  associativity axiom.
\end{proof}

\subsubsection{}

\begin{remark}
  For ordinary vertex algebras, it appears standard to impose the
  stronger axiom that the translation operator is $D(v) = \exp(vT)$
  for some derivation $T$ such that $[T, Y(a,u)] = \di_u Y(a,u)$.
  Exponentiating both sides yields
  \[ D(v) Y(a,u) = \exp(v \di_u) Y(a, u) D(v) = Y(a, u+v) D(v), \]
  which is the additive version of our translation axiom. We choose
  not to impose the analogous stronger axiom that $D(w) = w^H$ for
  some grading operator $H$ such that $[H, Y(a,z)] = z\di_z Y(a, z)$.
  (One can check that this is indeed the multiplicative analogue,
  since if $z = \exp u$ and $w = \exp v$ then $\exp(v \di_u) = w^{z
    \di_z}$, and clearly $w^{z \di_z} f(z) = f(zw)$ is a
  multiplicative translation.)
  
  On the other hand, locality is equivalent to the statement that,
  given $a,b,c \in V$, there exists a finite set $\{t_i\}_i$ of
  $\sT$-weights (which, to be clear, may depend on $a, b, c$) such
  that
  \begin{equation} \label{eq:mVOA-locality-as-equality}
    \prod_i (1 - t_i z/w) [Y(a,z),Y(b,w)]c = 0
  \end{equation}
  where $[-,-]$ is the commutator, and this is completely analogous to
  the case \cite[\S 1.2.4]{Frenkel2004} of ordinary vertex algebras.
\end{remark}

\subsubsection{}

\begin{lemma}[Translation] \label{lem:mVOA-translation-expansion}
  For $a, b \in V$,
  \[ \iota_{w^{-1}, zw}^\sT D(w) Y(a, z) b = Y(a, zw) D(w) b. \]
\end{lemma}

\begin{proof}
  This is a refinement of Lemma~\ref{it:mVOA-translation}, which says
  that
  \begin{align}
    \iota_{w^{-1}, zw}^\sT F_{a,b} &= Y(a, zw) D(w) b \in V\lseries*{1-zw}_\sT \pseries*{1-w} \label{eq:mVOA-translation-explicit-1} \\
    \iota_{z, w}^\sT F_{a,b} &= D(w) Y(a, z) b \in V\pseries*{1-w} \lseries*{1-z}_\sT \label{eq:mVOA-translation-explicit-2}
  \end{align}
  for some element $F_{a,b} \in V\pseries{1-z, 1-w}[(1-z)^{-1},
    (1-w)^{-1}, (1-zw)^{-1}]_\sT$. We claim that
  \[ F_{a,b} \in V\pseries*{1-z, 1-w}[(1 - z)^{-1}]_\sT, \]
  so $\iota_{z,w}^\sT$ is the identity map, and therefore
  \eqref{eq:mVOA-translation-explicit-1} and
  \eqref{eq:mVOA-translation-explicit-2} combine to give the desired
  result.
  \begin{itemize}
  \item (No poles in $[(1 - w)^{-1}]_\sT$) Since $\iota_{w^{-1},
    zw}^\sT$ is the identity on $(1-w^k t^\mu)^{-1}$, but
    \eqref{eq:mVOA-translation-explicit-1} has no such poles in $w$,
    neither does $F_{a,b}$.
  \item (No poles in $[(1 - zw)^{-1}]_\sT$) Since $\iota_{z, w}^\sT$
    sends $(1-z^iw^j t^\mu)^{-1}$ (for $i, j \neq 0$) to an expression
    which has non-trivial poles of the form $(1 - w^k t^\mu)^{-1}$,
    but \eqref{eq:mVOA-translation-explicit-2} has no such poles in
    $w$, the original $F_{a,b}$ must have no poles of the form
    $(1-z^iw^j t^\mu)^{-1}$. \qedhere
  \end{itemize}
\end{proof}

\subsubsection{}

\begin{lemma} \label{lem:zw-decomposition}
  There is a decomposition of $\bk_\sT$-modules
  \[ \bk_\sT[(1 - z/w)^{-1}]_\sT^{(1)} = \bk_\sT \oplus I_{z,w} \]
  where $I_{z,w}$ consists of terms of degree $>0$ in the generating
  set. It induces a decomposition
  \[ \bk_\sT[(1 - z)^\pm, (1 - w)^\pm, (1 - z/w)^\pm]_\sT^{(1)} \cong I_{z,w}[(1 - w)^\pm]_\sT^{(1)} \oplus \bk_\sT[(1 - z)^\pm, (1 - w)^\pm]_\sT^{(1)}. \]
\end{lemma}

This decomposition, and Lemma~\ref{lem:mVOA-OPE} below, hold without
the reducedness assumption, i.e. the superscripts $(1)$, if one passes
from $\bk_\sT$ to the fraction field of an appropriate multiple cover,
like in the proof of
Lemma~\ref{lem:residue-map-of-diagonal-expansion}. We will not use
this.

\begin{proof}
  This is polynomial division with remainder: recursively apply the
  explicit formulas
  \begin{align*}
    \frac{1 - z t^\mu}{1 - (z/w)t^\nu} &= \frac{1 - w t^{\mu-\nu}}{1 - (z/w)t^\nu} + wt^{\mu-\nu}, \\
    \frac{(1 - z t^\mu)^{-1}}{1 - (z/w) t^\nu} &= \frac{(1 - wt^{\mu+\nu})^{-1}}{1 - (z/w) t^\nu} + w t^\mu (1 - zt^\mu)^{-1}(1 - wt^{\mu+\nu})^{-1}. \qedhere
  \end{align*}
\end{proof}

\subsubsection{}

\begin{definition}
  Given an element $f \in \bk_\sT\pseries{1-z, 1-w}[(1 - z)^{-1}, (1 -
    w)^{-1}, (1 - z/w)^{-1}]_\sT^{(1)}$, let
  \[ f \eqqcolon \sing_{z,w}(f) + \reg_{z,w}(f) \]
  be the decomposition induced by Lemma~\ref{lem:zw-decomposition}. In
  particular, given $a, b, c \in V$, recall that the weak
  associativity axiom implies $Y(a, z) Y(b, w) c = \iota_{w,z}^\sT
  F_{a,b,c}$ for an element $F_{a,b,c} \in V\pseries{1-z,
    1-w}[(1-z)^{-1}, (1-w)^{-1}, (1-zw)^{-1}]_\sT$. Define the {\it
    normal-ordered product}
  \[ \NO{Y(a, z) Y(b, w)}c \coloneqq \reg_{z,w} F_{a,b,c}. \]
  This terminology and notation comes from ordinary vertex algebras,
  cf. \cite[\S 2.2, \S 3.3]{Frenkel2004}. By construction, $\NO{Y(a,
    z) Y(b, w)}c$ has no poles of the form $1 - z^i w^j t^\mu$ for $i,
  j \neq 0$.
\end{definition}

\subsubsection{}

\begin{lemma}[Operator product expansion (OPE), cf. {\cite[(3.3.6)]{Frenkel2004}}] \label{lem:mVOA-OPE}
  Suppose the equivariant multiplicative vertex algebra is reduced.
  For $a, b, c \in V$:
  \begin{align*}
    Y(a, z) Y(b, w) c &= \iota_{w,z}^\sT Y\left(\sing_{z,w}(Y(a, z/w)b), w\right)c + \NO{Y(a, z) Y(b, w)} c, \\
    Y(b, w) Y(a, z) c &= \iota_{z,w}^\sT Y\left(\sing_{z,w}(Y(a, z/w)b), w\right)c + \NO{Y(b, w) Y(a, z)} c.
  \end{align*}
  Furthermore, $\NO{Y(a, z) Y(b, w)}c = \NO{Y(b, w) Y(a, z)}c$.
\end{lemma}

\begin{proof}
  Let $F_{a,b,c} \in V\pseries{1-z, 1-w}[(1-z)^{-1}, (1-w)^{-1},
    (1-zw)^{-1}]_\sT^{(1)}$ be the element whose expansions give the
  three elements $Y(a, z) Y(b, w) c$, $Y(b, w) Y(a, z) c$, and $Y(Y(a,
  z/w)b, w)c$, as in Corollary~\ref{cor:mVOA-cousin}. In particular,
  \[ Y(Y(a, z/w)b, w)c = \iota_{z/w,w}^\sT \sing_{z,w} F_{a,b,c} + \iota_{z/w,w}^\sT \reg_{w,z} F_{a,b,c}. \]
  The left hand side has poles of the form $1 - z^i w^j t^\mu$ for $i,
  j \neq 0$, coming from $Y(a, z/w)b \in V\lseries{1-z/w}_\sT^{(1)}$.
  Since $\iota_{z/w,w}^\sT$ acts trivially on $\sing_{z,w} F_{a,b,c}$,
  and $\iota_{z/w,w}^\sT \reg_{w,z} F_{a,b,c}$ has no such poles,
  \[ \sing_{z,w} F_{a,b,c} = Y(\sing_{z,w} Y(a, z/w)b, w) c. \]
  Plug this into the equations
  \begin{align*}
    Y(a, z) Y(b, w) c &= \iota_{w,z}^\sT \sing_{z,w} F_{a,b,c} + \iota_{w,z}^\sT \reg_{w,z} F_{a,b,c}, \\
    Y(b, w) Y(a, z) c &= \iota_{z,w}^\sT \sing_{z,w} F_{a,b,c} + \iota_{z,w}^\sT \reg_{w,z} F_{a,b,c},
  \end{align*}
  and note that $\iota_{w,z}^\sT$ and $\iota_{z,w}^\sT$ act trivially
  on $\reg_{w,z} F_{a,b,c} \in V\pseries{1-z, 1-w}[(1-z)^{-1},
    (1-w)^{-1}]_\sT^{(1)}$ to conclude.
\end{proof}

\subsubsection{}

\begin{example}
  In the non-equivariant (and reduced) case, given $a, b \in V$, one
  can explicitly define elements $a_{(j)} \cdot b \in V$ by
  \[ Y(a, z/w) b \eqqcolon \sum_{j=0}^{m-1} \frac{a_{(j)} \cdot b}{(1 - z/w)^{j+1}} + \cdots \]
  where $\cdots$ denotes terms with no poles in $1 - z/w$. Note the
  explicit formula
  \[ (\iota_{w,z} - \iota_{z,w}) \frac{1}{1 - z/w} = \frac{w}{1-z} \delta\left(\frac{1-z}{1-w}\right) \in \bZ\pseries*{(1-z)^\pm, (1-w)^\pm} \]
  where $\delta(x) \coloneqq \sum_{n \in \bZ} x^n$ is the formal delta
  distribution. So Lemma~\ref{lem:mVOA-OPE} implies
  \begin{equation} \label{eq:mVOA-nonequivariant-OPE}
    \left(Y(a, z)Y(b, w) - Y(b, w) Y(a, z)\right)c = \sum_{j=0}^{m-1} Y(a_{(j)} \cdot b, w) \frac{\di_z^j}{j!} \frac{w^{j+1}}{1-z}\delta\left(\frac{1-z}{1-w}\right),
  \end{equation}
  which is also sometimes referred to as the OPE. Note that $\di_z =
  -\di_{1-z}$.
\end{example}

\subsubsection{}
\label{sec:vertex-to-lie-algebra}

Let $\im(1-D(z)) \subset V$ denotes the $\bk_\sT$-submodule generated
by the coefficients of $(1 - D(z))a \in V\pseries{1-z}$ for all $a \in
V$, and define (with notation inspired by
Definition~\ref{def:rigidification})
\[ V^\pl \coloneqq V / \im(1 - D(z)). \]

\begin{theorem} \label{thm:vertex-to-lie-algebra}
  Suppose the equivariant multiplicative vertex algebra is reduced.
  Then
  \begin{align}
    V \otimes V &\to V \otimes \bk_\sT^\wedge \nonumber \\
    a \otimes b &\mapsto \rho_K\left(Y(a, z)b\right),\label{eq:vertex-lie-bracket}
  \end{align}
  where $\rho_K$ is the K-theoretic residue map
  (Definition~\ref{def:k-theoretic-residue-map}), induces a
  homomorphism
  \[ [-, -]\colon V^\pl \otimes V^\pl \to V^\pl \otimes \bk_\sT^\wedge, \]
  which becomes a Lie bracket on the $\bk_\sT^\wedge$-module $V^\pl
  \otimes \bk_\sT^\wedge$.
\end{theorem}

This should be compared to the analogous result for an ordinary vertex
algebra $V$: if $D(u)$ is the translation operator, $V/\im(D(u))$ has
a Lie bracket given by $[a, b] \coloneqq \Res_{u=0} Y(a, u)b$ \cite[\S
  4]{Borcherds1986}. The proof will follow the strategy of \cite[\S
  3.2, \S 3.3]{Frenkel2004}.

\subsubsection{}

\begin{definition} \label{def:global-vertex-algebra}
  The completion map $\bk_\sT \hookrightarrow \bk_\sT^\wedge$ is
  obviously injective. In fact $\bk_\sT$ is a pure sub-module, so the
  induced $V \to V \otimes \bk_\sT^\wedge$ is still injective. If
  \eqref{eq:vertex-lie-bracket} actually takes values in
  \[ V \subset V \otimes \bk_\sT^\wedge, \]
  then we say the vertex algebra is {\it global}. Then $[-, -]$
  becomes a Lie bracket on $V^\pl$ itself.
\end{definition}

\subsubsection{}

\begin{proof}[Proof of Theorem~\ref{thm:vertex-to-lie-algebra}.]
  First we verify that $[-, -]$ is well-defined. By the translation
  property (Lemma~\ref{lem:mVOA-translation-expansion}),
  \[ Y(a, z) D(w) b = \iota_{w^{-1}, z}^\sT D(w) Y(a, z/w) b = D(w) \iota_{w^{-1}, z} Y(a, z/w) b. \]
  Applying $\rho_{K,z}$ to both sides and using
  Lemma~\ref{lem:residue-map-of-diagonal-expansion},
  \[ \rho_{K,z} Y(a,z) D(w) b = D(w) \rho_{K,z} Y(a,z) b. \]
  Hence the operation \eqref{eq:vertex-lie-bracket} preserves $\im(1 -
  D(z))$ in its second factor. The same is true of its first factor by
  Lemma~\ref{lem:lie-bracket-skew-symmetry} below, which also shows
  that the induced $[-, -]$ is anti-symmetric.

\subsubsection{}

  \begin{lemma}[Anti-symmetry] \label{lem:lie-bracket-skew-symmetry}
    $\rho_K(Y(a,z)b) \in -\rho_K(Y(b, z)a) + \im(1 - D(z))$.
  \end{lemma}

  \begin{proof}
    By definition, $\rho_K(f(z^{-1})) = -\rho_K(f(z))$. So
    \begin{align*}
      \rho_K Y(a, z)b
      = \rho_K D(z) Y(b, z^{-1})a
      &= -\rho_K D(z^{-1}) Y(b, z) a \\
      &= -\rho_K Y(b,z)a + \rho_K (1 - D(z^{-1})) Y(b, z)a
    \end{align*}
    where the first equality is the skew symmetry axiom.
  \end{proof}

\subsubsection{}

  \begin{lemma}[Jacobi identity] \label{lem:mVOA-jacobi-identity}
    $[a, [b, c]] - [b, [a, c]] = [[a, b], c]$.
  \end{lemma}

  \begin{proof}
    Apply $\rho_{K,w}\rho_{K,z}$ to the OPE relations of
    Lemma~\ref{lem:mVOA-OPE} to get
    \begin{align*}
      [a, [b,c]] &= \rho_{K,w} Y\left(\rho_{K,z}\iota_{w,z}^\sT \sing_{w,z} Y(a, z/w)b, w\right) c + \rho_{K,w}\rho_{K,z} \NO{Y(a,z)Y(b,w)}c \\
      [b, [a,c]] &= \rho_{K,w} Y\left(\rho_{K,z}\iota_{z,w}^\sT \sing_{w,z} Y(a, z/w)b, w\right) c + \rho_{K,w}\rho_{K,z} \NO{Y(a,z)Y(b,w)}c.
    \end{align*}
    Applying Lemma~\ref{lem:residue-map-of-diagonal-expansion},
    \[ [a, [b,c]] - [b, [a,c]] = \rho_{K,w} Y\left(\rho_{K,z} Y(a, z)b, w\right) c = [[a, b], c]. \]
    Here we used that $\iota_{w,z}^\sT$ acts trivially on $\reg_{w,z}
    Y(a, z/w) b$, and that it has no contribution to $\rho_{K,z}$.
  \end{proof}

  This concludes the proof of Theorem~\ref{thm:vertex-to-lie-algebra}.
\end{proof}

\subsubsection{}

\begin{remark}
  In the non-equivariant (and reduced) setting, the Jacobi identity
  (Lemma~\ref{lem:mVOA-jacobi-identity}) for the Lie bracket is the
  equality of coefficients of the $(1 - z)^{-1} (1 - w)^{-1}$ term in
  \eqref{eq:mVOA-nonequivariant-OPE}. Putting the equality of all the
  other coefficients into an appropriate generating series yields the
  {\it Borcherds identity}; see e.g. \cite[\S 3.3.10]{Frenkel2004}.

  There appears to be no good equivariant analogue of the Borcherds
  identity. The problem is that $Y(a, z/w)b$ may have poles in $1 -
  (z/w)t^\mu$ for multiple $\mu \in \Char\sT$, and furthermore, which
  $\sT$-weights appear in these poles generally depends on the choice
  of $a, b \in V$, just like in \eqref{eq:mVOA-locality-as-equality}.
  So while it may be possible to extract identities like
  \eqref{eq:mVOA-nonequivariant-OPE} for any given $a, b, c \in V$,
  there is no universal formula for all $a, b, c \in V$, which is
  required for a Borcherds-type identity.
\end{remark}

\subsection{On K-homology of monoidal stacks}
\label{sec:mVOA-on-K-homology}

\subsubsection{}

\begin{definition} \label{def:graded-monoidal-stack}
  Let $\fM = \bigsqcup_\alpha \fM_\alpha \in \cat{Art}_\sT$, where
  $\alpha$ ranges over an additive monoid. We say $\fM$ is {\it graded
    monoidal} if:
  \begin{enumerate}
  \item (identity) $\fM_0 = *$ consists of a single point;
  \item (monoid) there exist morphisms in $\cat{Art}_\sT$
    \[ \Phi_{\alpha,\beta}\colon \fM_\alpha \times \fM_\beta \to \fM_{\alpha+\beta} \]
    such that $\Phi_{\alpha,\beta+\gamma} \circ (\id \times
    \Phi_{\beta,\gamma}) = \Phi_{\alpha+\beta,\gamma} \circ
    (\Phi_{\alpha,\beta} \times \id)$ and $\Phi_{0,\alpha} = \id =
    \Phi_{\alpha,0}$;
  \item (grading) there exist morphisms in $\cat{Art}_\sT$
    \[ \Psi_\alpha \colon [*/\bC^\times] \times \fM_\alpha \to \fM_\alpha \]
    such that $\Psi_{\alpha+\beta} \circ (\id \times
    \Phi_{\alpha,\beta}) = \Phi_{\alpha,\beta} \circ
    (\Psi_\alpha^{(12)}, \Psi_\alpha^{(13)})$ and $\Psi_\alpha \circ
    (\id \times \Psi_\alpha) = \Psi_\alpha \circ (\Omega \times \id)$.
    Here superscripts $(ij)$ mean to act on the $i$-th and $j$-th
    factors, and $\Omega\colon [*/\bC^\times]^2 \to [*/\bC^\times]$ is
    induced by the multiplication map $(\bC^\times)^2 \to \bC^\times$.
  \end{enumerate}
\end{definition}

\subsubsection{}
\label{sec:scalar-multiplication}

\begin{definition} \label{def:monoidal-stack-degree-map}
  The map $\Psi_\alpha$ induces a $\bZ$-grading on the K-theory of
  $\fM_\alpha$ as follows. Given a formal variable $z$ and any base $S
  \in \cat{Art}_\sT$, let $z^{\deg}$ be the {\it degree operator}
  \begin{equation} \label{eq:monoidal-stack-degree-map}
    z^{\deg}\colon K_{\sT}^\circ(\fM_\alpha \times S) \xrightarrow{(\Psi_\alpha \times \id)^*} K_{\sT}^\circ([*/\bC^\times] \times \fM_\alpha \times S) \cong K_\sT^\circ(\fM_\alpha \times S)[z^\pm],
  \end{equation}
  where we write $K([*/\bC^\times]) = \bZ[z^\pm]$. An element
  $\cE \in K_\sT^\circ(\fM_\alpha \times S)$ has {\it degree $k$} if
  $z^{\deg}\cE = z^k\cE$.

  For a product $\prod_i \fM^{(i)}_{\alpha_i}$, let $\Psi^{(i)}$
  be the $[*/\bC^\times]$-action on the $i$-th factor, and $\Psi$ be
  the induced diagonal $[*/\bC^\times]$-action. Write
  \[ z^{\deg_i}\colon K_{\sT}^\circ\Big(\prod_i \fM^{(i)}_{\alpha_i} \times S\Big) \to K_{\sT}^\circ\Big(\prod_i \fM^{(i)}_{\alpha_i} \times S\Big)[z^\pm] \]
  to mean $z^{\deg}$ applied to the $i$-th factor
  $\fM^{(i)}_{\alpha_i}$, i.e. applying only $\Psi_{\alpha_i}$ in
  \eqref{eq:monoidal-stack-degree-map}. For instance, $z^{\deg} =
  \prod_i z^{\deg_i}$.
\end{definition}

\subsubsection{}
\label{sec:bilinear-complex}

Throughout this subsection, $\fM \in \cat{Art}_\sT$ will be graded
monoidal with the extra data of {\it bilinear elements}
\[ \cE_{\alpha,\beta}^\bullet \in K_\sT^\circ(\fM_\alpha \times \fM_\beta) \]
of degree $\pm 1$ in the two factors, meaning that
\begin{equation} \label{eq:bilinear-complex-conditions}
\begin{alignedat}{3}
  (\Phi_{\alpha,\beta} \times \id)^*(\cE_{\alpha+\beta,\gamma}^\bullet) &= \cE_{\alpha,\gamma}^\bullet \oplus \cE_{\beta,\gamma}^\bullet \qquad &&(\Psi_\alpha \times \id)^*(\cE_{\alpha,\beta}^\bullet) &&= \cL^\vee \boxtimes \cE_{\alpha,\beta}^\bullet \\
  (\id \times \Phi_{\beta,\gamma})^*(\cE_{\alpha,\beta+\gamma}^\bullet) &= \cE_{\alpha,\beta}^\bullet \oplus \cE_{\alpha,\gamma}^\bullet \qquad &&(\id \times \Psi_\beta)^*(\cE_{\alpha,\beta}^\bullet) &&= \cL \boxtimes \cE_{\alpha,\beta}^\bullet
\end{alignedat}
\end{equation}
where $\cL_{[*/\bC^\times]} \in K_\sT^\circ([*/\bC^\times])$ is the
weight-$1$ representation and we omitted some obvious pullbacks along
projections to various factors. Its rank is therefore a bilinear
pairing
\[ \chi(\alpha, \beta) \coloneqq \rank \cE^\bullet_{\alpha,\beta}. \]

\subsubsection{}

\begin{remark} \label{rem:bilinear-complex-from-POT}
  If $\fM$ has a perfect obstruction theory given by a bilinear
  function $D(E, E)$ at a point $[E] \in \fM$, then the relative
  obstruction theory for $\Phi_{\alpha,\beta}$ is
  \begin{equation} \label{eq:bilinear-complex-as-normal-bundle}
    \cE_{\alpha,\beta}^\bullet \oplus \sigma^*\cE_{\beta,\alpha}^\bullet \,\text{ for some }\, \cE_{\alpha,\beta}^\bullet \in \cat{Perf}_\sT(\fM_\alpha \times \fM_\beta),
  \end{equation}
  where $\sigma\colon \fM_\beta \times \fM_\alpha \to \fM_\alpha
  \times \fM_\beta$ permutes the factors and
  $\cE_{\alpha,\beta}^\bullet$ is given by $D(E, F)$ at a point
  $([E],[F]) \in \fM_\alpha \times \fM_\beta$. Hence
  $\cE_{\alpha,\beta}^\bullet$ satisfies the bilinearity properties of
  \eqref{eq:bilinear-complex-conditions}.

  Monoidal stacks with such perfect obstruction theories play a
  crucial role in many other constructions in geometric representation
  theory, notably the quantum loop group action on the equivariant
  K-theory of Nakajima quiver varieties \cite{Maulik2019}. The
  multiplicative vertex algebra of
  Theorem~\ref{thm:mVOA-monoidal-stack} below can be viewed as a
  compatible, homological dual to these quantum loop groups
  \cite{Liu2022}.
\end{remark}

\subsubsection{}

\begin{theorem} \label{thm:mVOA-monoidal-stack}
  Let $\fM$ be a graded monoidal stack with a bilinear element
  (\S\ref{sec:bilinear-complex}). Let $\bfK^\sT_\circ(-)$ be a
  commutative K-homology theory
  (Definition~\ref{def:k-homology-theory}). If $\bfK^\sT_\circ([*/G])
  = \bK^\sT_\circ([*/G])$ for $G = 1$ and $G = \bC^\times$, then
  \[ \bfK^\sT_\circ(\fM) \coloneqq \bigoplus_\alpha \bfK^\sT_\circ(\fM_\alpha) \]
  has the structure of a global $\sT$-equivariant multiplicative
  vertex algebra.
\end{theorem}

The data of the multiplicative vertex algebra is given in
\S\ref{sec:mVOA-monoidal-stack-data}. Then we check the multiplicative
vertex algebra axioms. The primary difficulty, and also the reason for
finiteness axiom in the definition of $\bK^\sT_\circ(-)$, is to ensure
that the vertex product always lands in $V\lseries*{1-z}_\sT$; see
\S\ref{sec:monoidal-stack-vertex-product-well-defined}. All other
properties follow almost formally.

Theorem~\ref{thm:mVOA-monoidal-stack} is the K-theoretic version of
\cite[Theorem 4.6]{Joyce2021}, which puts an additive vertex algebra
structure on the homology group $H_*(\fM)$.

Theorem~\ref{thm:mVOA-monoidal-stack} continues to hold for localized
K-homology theories. Proposition~\ref{prop:actual-k-homology} says
that the ``concrete'' localized K-homology theory
$K_\circ^\sT(-)_{\loc}$ satisfies the hypotheses of the theorem.

\subsubsection{}
\label{sec:mVOA-monoidal-stack-data}

Here is the data of the $\sT$-equivariant multiplicative vertex
algebra.
\begin{itemize}
\item The vacuum element $\vac \in \bfK^\sT_\circ(\fM_0) \subset
  \bfK^\sT_\circ(\fM)$ is given by the identity maps
  \[ \vac_S\colon K^\circ_\sT(\fM_0 \times S) = K^\circ_\sT(S) \xrightarrow{\id} K^\circ_\sT(S), \]
  recalling that $\fM_0 = *$ is a single point by assumption.
  
\item The translation operator $D(z)
  \in \End(\bfK^\sT_\circ(\fM))\pseries{1-z}$ is given by
  \begin{equation} \label{eq:monoidal-stack-translation-operator}
    (D(z)\phi)_S(\cE) \coloneqq \phi_S(z^{\deg} \cE)
  \end{equation}
  (see
  Lemma~\ref{lem:monoidal-stack-translation-operator-well-defined}).
  Clearly $D(z)D(w) = D(zw)$. On the K-homology of a product, let
  $D(z)^{(i)}$ be the operator where $z^{\deg_i}$ is used instead of
  $z^{\deg}$, e.g. $D(z) = \prod_i D(z)^{(i)}$.
  
\item For $\phi \in \bfK^\sT_\circ(\fM_\alpha)$ and
  $\psi \in \bfK^\sT_\circ(\fM_\beta)$, the vertex product is
  \begin{equation} \label{eq:k-homology-vertex-operation}
    Y(\phi, z) \psi \coloneqq (\Phi_{\alpha,\beta})_* D(z)^{(1)} \left((\phi \boxtimes \psi) \cap \Theta_{\alpha,\beta}^\bullet(z)\right).
  \end{equation}
  Namely, its component over a base $S \in \cat{Art}_\sT$ is
  \begin{align}
    (Y(\phi, z) \psi)_S\colon K^\circ_\sT(\fM_{\alpha+\beta} \times S) &\to K^\circ_\sT(S)[(1-z)^\pm]_\sT^{(1)} \nonumber \\
    \cE &\mapsto (\phi \boxtimes \psi)_S\left( \Theta_{\alpha,\beta}^\bullet(z) \otimes z^{\deg_1} (\Phi_{\alpha,\beta} \times \id_S)^* \cE \right), \label{eq:monoidal-stack-vertex-product}
  \end{align}
  where (see \S\ref{sec:monoidal-stack-vertex-product-well-defined}
  for details, and cf. \eqref{eq:bilinear-complex-as-normal-bundle})
  \begin{equation} \label{eq:mVOA-theta}
    \Theta_{\alpha,\beta}^\bullet(z) \coloneqq \se\left(z^{-1} \cE_{\alpha,\beta}^\bullet + z \sigma^*\cE_{\beta,\alpha}^\bullet\right).
  \end{equation}
\end{itemize}

\subsubsection{}

\begin{lemma} \label{lem:monoidal-stack-translation-operator-well-defined}
  Write $\bfK_\circ^\sT([*/\bC^\times]) = \bZ[\xi]$ as in
  Proposition~\ref{prop:operational-k-homology-BGm}. Then the formula
  \eqref{eq:monoidal-stack-translation-operator} defines
  \[ D(z)\phi = \sum_{k \ge 0} (1 - z)^k \Psi_*\left(\xi^k \boxtimes \phi\right) \in \bfK_\circ^\sT(\fM)\pseries*{1-z}. \]
\end{lemma}

\begin{proof}
  This is by explicit calculation, by checking that the two sides are
  the same on any given $\cE \in K_\sT^\circ(\fM \times S)$. Write
  $K([*/\bC^\times]) = \bZ[s^\pm]$ and let $(\Psi \times \id)^*\cE =
  \bigoplus_{n \in \bZ} s^n \boxtimes \cE_n$. Then
  \begin{align*}
    \sum_{k \ge 0} (1 - z)^k \left(\Psi_*\left(\xi^k \boxtimes \phi\right)\right)_S(\cE)
    &= \sum_{k \ge 0} (1 - z)^k \sum_{n \in \bZ} \xi^k(s^n) \phi_S(\cE_n) \\
    &= \sum_{n \in \bZ} \phi_S(\cE_n) \sum_{k \ge 0} (z - 1)^k \binom{n}{k} \\
    &= \sum_{n \in \bZ} \phi_S(\cE_n) (1 + (z-1))^n
    = \phi_S(z^{\deg} \cE).
  \end{align*}
  Note that the sum in $n$ is always a finite sum.
\end{proof}

\subsubsection{}
\label{sec:monoidal-stack-vertex-product-well-defined}

We must clarify what \eqref{eq:monoidal-stack-vertex-product} and
\eqref{eq:mVOA-theta} mean. First, \eqref{eq:mVOA-theta} is not
well-defined because Definition~\ref{def:k-theoretic-euler-class} only
defines the K-theoretic Euler class $\se(-)$ on spaces with trivial
$\sT$-action. Second, it is not clear, a priori, that $Y(\phi, z)\psi$
lands in $\lseries*{1-z}^{(1)}_\sT$ nor that its components land in
$[(1-z)^\pm]_\sT^{(1)}$.

Let $\fix \coloneqq \fix_{\phi \boxtimes \psi}$ and $\fF \coloneqq
\fF_{\phi \boxtimes \psi}$ for short. If
$\Theta^\bullet_{\alpha,\beta}(z)$ were well-defined, then by the
localization axiom and push-pull,
\begin{equation} \label{eq:operational-k-homology-vertex-operation-Theta-cap}
  (\phi \boxtimes \psi) \cap \Theta^\bullet_{\alpha,\beta}(z) = \fix_*\left[(\phi \boxtimes \psi)^\sT \cap \fix^*\Theta^\bullet_{\alpha,\beta}(z)\right].
\end{equation}
We take the right hand side to be the {\it definition} of the left
hand side, using the following
Lemma~\ref{lem:operational-k-homology-vertex-operation-Theta-cap}.
Since $D(z) \in \End(\bfK_\circ^\sT(\fM))\pseries*{1-z}$, this addresses
both of the aforementioned issues.

\subsubsection{}

\begin{lemma} \label{lem:operational-k-homology-vertex-operation-Theta-cap}
  The operator $\fix^*\Theta_{\alpha,\beta}^\bullet(z) \otimes$ is
  well-defined and
  \[ \eqref{eq:operational-k-homology-vertex-operation-Theta-cap} \in \bfK_\circ^{\sT}(\fM_\alpha \times \fM_\beta)\left[(1 - z)^\pm\right]_{\sT}^{(1)}. \]
  Consequently, in \eqref{eq:k-homology-vertex-operation}, $Y(\phi,
  z)\psi \in \bfK^\sT_\circ(\fM_{\alpha+\beta})\lseries*{1 -
    z}_{\sT}^{(1)}$.
\end{lemma}

\begin{proof}
  By the localization axiom, $\sT$ acts trivially on $\fF$. Treat $z$
  as the weight of some extra $\bC^\times$ which also acts trivially
  on $\fF$. Then Definition~\ref{def:k-theoretic-euler-class} defines
  \[ \fix^*\Theta_{\alpha,\beta}^\bullet(z) \otimes \in \End(K^\circ_\sT(\fF))\lseries*{(1 - z)^{-1}}_{\sT}^{(1)}. \]
  By Lemma~\ref{lem:k-theory-inverse-euler-class}, the degree-$N$
  terms in this Laurent series are operators of multiplication by
  elements in $I^\circ(\fF)^{\otimes M(N)}$ for some function $M(N)$
  such that $\lim_{N \to \infty} M(N) = \infty$. But the finiteness
  axiom says
  \[ (\phi \boxtimes \psi)^\sT_S(I^\circ(\fF \times S)^{\otimes M}) = 0 \qquad \forall M \gg 0. \]
  Hence the Laurent series is actually a Laurent polynomial, i.e.
  \[ (\phi \boxtimes \psi)^\sT \cap \fix^*\Theta^\bullet_{\alpha,\beta}(z) \in \bfK_\circ^\sT(\fF)\left[(1 - z)^\pm\right]_{\sT}^{(1)}. \]
  The desired result follows after applying $\fix_*$.
\end{proof}

From the proof, observe that the vertex algebra is holomorphic unless
$\cE_{\alpha,\beta}^\bullet$ is truly a {\it virtual} element. This is
true of the original homological construction as well.

\subsubsection{}

\begin{proposition}[Vacuum axioms] \label{prop:operational-k-homology-vacuum-axiom}
  \begin{align*}
    Y(\vac, z)\phi &= \phi \\
    Y(\phi, z)\vac &= D(z)\phi.
  \end{align*}
\end{proposition}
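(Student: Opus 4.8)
The plan is to verify the two vacuum identities directly from the component-level formula \eqref{eq:operational-k-homology-vertex-operation}, using that $\fM_0 = *$ together with the bilinearity and weight conditions \eqref{eq:bilinear-complex-conditions} on $\cE^\bullet$.

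For the first identity, take $\phi = \vac \in K^\sT_\circ(\fM_0)$ and $\psi \in K^\sT_\circ(\fM_\beta)$. The relevant direct sum map is $\Phi_{0,\beta}\colon \fM_0 \times \fM_\beta \to \fM_\beta$, which is the canonical isomorphism $\{[0]\} \times \fM_\beta \cong \fM_\beta$ since $\fM_0$ is a point. The key input is that the cross-term complex $\cE^\bullet_{0,\beta}$ (and $\cE^\bullet_{\beta,0}$) has rank $\chi(0,\beta) = 0$ and is in fact trivial: applying the first bilinearity relation in \eqref{eq:bilinear-complex-conditions} with the zero class (or directly, since $\fM_0$ is a point with no moduli) forces $\cE^\bullet_{0,\beta} = 0$ in $K^\circ_\sT$, hence $\Theta^\bullet_{0,\beta}(z) = \wedge^\bullet_{-z^{-1}}(0)^\vee \otimes \wedge^\bullet_{-z}(0)^\vee = 1$. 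Moreover $z^{\deg_1}$ acts trivially on the first factor because $\fM_0$ carries the trivial $[*/\bC^\times]$-action (its only stabilizer scaling is trivial on the zero object). So \eqref{eq:operational-k-homology-vertex-operation} becomes $\cE \mapsto (\vac \boxtimes \psi)_S(\Phi_{0,\beta}^*\cE)$, and unwinding Definition~\ref{def:operational-k-homology-boxtimes} with $\vac$ the identity map and $\Phi_{0,\beta}$ an isomorphism, this is exactly $\psi_S(\cE)$. Hence $Y(\vac, z)\psi = \psi$.

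For the second identity, take $\psi = \vac \in K^\sT_\circ(\fM_0)$ and $\phi \in K^\sT_\circ(\fM_\alpha)$, with $\Phi_{\alpha,0}\colon \fM_\alpha \times \{[0]\} \cong \fM_\alpha$. Again $\cE^\bullet_{\alpha,0} = \cE^\bullet_{0,\alpha} = 0$, so $\Theta^\bullet_{\alpha,0}(z) = 1$, and \eqref{eq:operational-k-homology-vertex-operation} reads $\cE \mapsto (\phi \boxtimes \vac)_S(z^{\deg_1}\Phi_{\alpha,0}^*\cE)$. Since $\vac$ is the identity on the (point) second factor, $(\phi \boxtimes \vac)_S$ is just $\phi_S$ precomposed with the identification $\fM_\alpha \times \{[0]\} \cong \fM_\alpha$, and $z^{\deg_1}$ applied to the first factor is precisely $z^{\deg}$ in the sense of Definition~\ref{def:operational-k-homology-translation-operator}. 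Thus the component is $\phi_S(z^{\deg}\cE) = (D(z)\phi)_S(\cE)$, giving $Y(\phi, z)\vac = D(z)\phi$.

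The main obstacle, such as it is, is bookkeeping: one must carefully justify that $\cE^\bullet_{0,\beta}$ vanishes in K-theory and that $z^{\deg_1}$ is trivial on the $\fM_0$ factor — both follow from $\fM_0$ being a reduced point with trivial scaling action, but it is worth stating explicitly since the weight conditions \eqref{eq:bilinear-complex-conditions} are the formal mechanism. Everything else is an unwinding of the definitions of $\boxtimes$, $\cap$, and $z^{\deg}$, with no analytic subtleties because no denominators $1 - tz^i$ appear (consistent with the vertex algebra being holomorphic when the cross-terms are honest bundles, as noted after the Localization Lemma).
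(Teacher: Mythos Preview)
Your proof is correct and follows the same approach as the paper's own: the paper simply observes that $\Phi_{0,\beta}^*\cE = \cE$ under $\fM_0 \times \fM_\beta \cong \fM_\beta$ and that $\Theta_{0,\beta}^\bullet(z) = \cO_{\fM_\beta}$, then declares the rest ``an exercise in unrolling notation,'' which is exactly what you have carried out. Your explicit justification that $\cE^\bullet_{0,\beta} = 0$ via bilinearity and that $z^{\deg_1}$ is trivial on the $\fM_0$ factor is a welcome elaboration of points the paper leaves implicit.
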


\begin{proof}
  Under the isomorphism $\fM_0 \times \fM_\beta \cong \fM_\beta$, it
  is clear that $\Phi_{0,\beta}^*\cE = \cE$. Using that
  $\Theta_{0,\beta}^\bullet(z) = \cO_{\fM_\beta}$, what remains is an
  exercise in unrolling notation.
\end{proof}

\subsubsection{}

\begin{proposition}[Skew symmetry] \label{prop:operational-k-homology-skew-symmetry}
  $Y(\phi, z) \psi = D(z) Y(\psi, z^{-1}) \phi$.
\end{proposition}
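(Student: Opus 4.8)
The plan is to compare the two expressions by unrolling the definitions and using the behavior of $\Theta^\bullet(z)$ under the permutation of factors $\sigma\colon \fM_\alpha \times \fM_\beta \to \fM_\beta \times \fM_\alpha$. First I would apply both sides to a test class $\cE \in K^\circ_\sT(\fM_{\alpha+\beta} \times S)$ using the component form \eqref{eq:operational-k-homology-vertex-operation}. The left-hand side gives $(\phi \boxtimes \psi)_S(\Theta^\bullet_{\alpha,\beta}(z) \otimes z^{\deg_1} \Phi^*_{\alpha,\beta}\cE)$, while the right-hand side, after peeling off the outer $D(z)$ (which by Definition~\ref{def:operational-k-homology-translation-operator} inserts a $z^{\deg}$ on $\cE$ on the $\fM_{\alpha+\beta}$ factor, i.e. a simultaneous $z^{\deg_1}$ on both $\fM_\beta$ and $\fM_\alpha$ after pulling back along $\Phi$), gives a $(\psi \boxtimes \phi)_S$ applied to $\Theta^\bullet_{\beta,\alpha}(z^{-1})$ tensored with a suitably degree-shifted $\Phi^*_{\beta,\alpha}\cE$. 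The key identities to invoke are the commutativity $\Phi_{\alpha,\beta} = \Phi_{\beta,\alpha} \circ \sigma$ (so $\Phi^*_{\alpha,\beta}\cE = \sigma^*\Phi^*_{\beta,\alpha}\cE$), the symmetry of $\boxtimes$ under $\sigma$ relating $(\phi\boxtimes\psi)_S$ and $(\psi\boxtimes\phi)_S$ (valid since we assumed the squares \eqref{eq:operational-k-homology-boxtimes} commute, or can be arranged to), and most importantly the identity
\[
  \sigma^* \Theta^\bullet_{\beta,\alpha}(z^{-1}) = \Theta^\bullet_{\alpha,\beta}(z),
\]
which is immediate from the definition \eqref{eq:mVOA-theta}: swapping the roles of $\alpha$ and $\beta$ interchanges the two factors $\wedge^\bullet_{-z^{-1}}(\cE^\bullet_{\alpha,\beta})^\vee$ and $\wedge^\bullet_{-z}(\cE^\bullet_{\beta,\alpha})^\vee$, and replacing $z$ by $z^{-1}$ swaps them back.

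The main bookkeeping is tracking the $z^{\deg}$ operators carefully, since this is where the multiplicative/equivariant structure enters. On the right-hand side one has the outer $D(z)$ contributing $z^{\deg}$ on the full $\fM_{\alpha+\beta}$ factor of $\cE$, the inner $D(z^{-1})^{(1)}$ from $Y(\psi, z^{-1})\phi$ contributing $z^{-\deg_1}$ on the $\fM_\beta$-factor (which, after $\Phi^*_{\beta,\alpha}$, is the $\deg$ on the $\fM_\beta$ slot), and the bilinearity/weight-$\pm 1$ conditions \eqref{eq:bilinear-complex-conditions} relating how $z^{\deg}$ interacts with $\Theta^\bullet$ through $\Psi$. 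Using $(\Psi\times\id)^*(\cE^\bullet) = \pi_1^*\cL \otimes \pi_{23}^*(\cE^\bullet)$ and its three companions, one checks that moving a $z^{\deg}$ past $\Theta^\bullet_{\alpha,\beta}(z)$ reproduces exactly the shift $\Theta^\bullet_{\alpha,\beta}(z) \mapsto \Theta^\bullet_{\alpha,\beta}(z)$ with the variable in the two tensor factors scaled oppositely, which is what makes the $z$-powers on the two sides match. The net effect is that the leftover degree shift on $\cE$ on the right-hand side collapses to precisely $z^{\deg_1}$ on the $\fM_\beta$-slot after the dust settles, matching the left-hand side.

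I expect the main obstacle to be exactly this last point: verifying that the total power of $z$ produced by the composition $D(z) \circ D(z^{-1})^{(1)}$ together with the argument-swap in $\Theta^\bullet$ and the $z^{-1}$-substitution conspires correctly, i.e. that there is no stray factor of $z^{\chi(\alpha,\beta)}$ or $z^{\chi(\beta,\alpha)}$ left over. This is the multiplicative analogue of the well-known sign/degree-shift check in the additive (homological) skew-symmetry argument of \cite[\S 4.2]{Joyce2021} and \cite{Gross2022}, and the computation is ``formally identical'' to theirs once one replaces additive shifts $e^{(\cdots)z}$ by multiplicative ones $z^{(\cdots)}$ and Euler classes $c_{\mathrm{top}}(-)$ by K-theoretic classes $\wedge^\bullet_{-z^{\pm 1}}(-)^\vee$. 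Since the ambient ring is $K^\sT_\circ(\fM_{\alpha+\beta})\lseries*{1-z^i}_\sT$ and all expansions in play are the canonical multiplicative ones, no choice of expansion is needed here and the identity holds on the nose rather than merely up to $\equiv$.
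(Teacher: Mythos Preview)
Your approach is correct and essentially the same as the paper's: both rest on the symmetry $\sigma^*\Theta^\bullet_{\beta,\alpha}(z^{-1}) = \Theta^\bullet_{\alpha,\beta}(z)$ together with the commutativity $\Phi_{\alpha,\beta} = \Phi_{\beta,\alpha}\circ\sigma$, and then track the degree operators. The paper short-circuits the bookkeeping you worry about by first observing $D(z)\Phi_* = \Phi_*D(z)$ (since $z^{\deg}\Phi^* = \Phi^*z^{\deg}$) and then collapsing $D(z)\,D(z^{-1})^{(1)} = D(z)^{(2)}$ \emph{outside} the cap product, so $\Theta^\bullet$ is never acted on by any $z^{\deg}$ and neither the bilinear conditions \eqref{eq:bilinear-complex-conditions} nor any stray $z^{\chi(\alpha,\beta)}$ factors ever enter.
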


\begin{proof}
  Since $z^{\deg} \Phi^* = \Phi^* z^{\deg}$, it follows that
  $D(z) \Phi_* = \Phi_* D(z)$. The right hand side becomes
  \[ (\Phi_{\beta,\alpha})_* D(z)^{(2)} \left((\psi \boxtimes \phi) \cap \Theta_{\beta,\alpha}^\bullet(z^{-1})\right). \]
  Using the symmetry $\Theta^\bullet_{\beta,\alpha}(z^{-1}) =
  \sigma^*\Theta_{\alpha,\beta}^\bullet(z)$ and the commutativity
  $\psi \boxtimes \phi = \phi \boxtimes \psi$ of the K-homology group,
  this is exactly $Y(\phi, z)\psi$.
\end{proof}

\subsubsection{}

\begin{lemma}[``Translation covariance''] \label{lem:operational-k-homology-translation-covariance}
  \begin{align*}
    (D(w)\phi \boxtimes \psi) \cap \Theta^\bullet(z) &\equiv D(w)^{(1)} \left((\phi \boxtimes \psi) \cap \Theta^\bullet(zw)\right) \\
    (\phi \boxtimes D(w)\psi) \cap \Theta^\bullet(z) &\equiv D(w)^{(2)} \left((\phi \boxtimes \psi) \cap \Theta^\bullet(z/w)\right)
  \end{align*}
\end{lemma}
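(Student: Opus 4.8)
The plan is to isolate a single homogeneity property of $\Theta^\bullet(z)$ and then deduce the lemma by unwinding all the definitions. The key property, and the only place the hypotheses on $\cE^\bullet$ enter, is
\[
  w^{\deg_1}\,\Theta^\bullet_{\alpha,\beta}(z) = \Theta^\bullet_{\alpha,\beta}(zw),
  \qquad
  w^{\deg_2}\,\Theta^\bullet_{\alpha,\beta}(z) = \Theta^\bullet_{\alpha,\beta}(z/w).
\]
To see this, recall that the bilinearity and weight conditions \eqref{eq:bilinear-complex-conditions} say precisely that, on $\fM_\alpha \times \fM_\beta$, the complex $\cE^\bullet_{\alpha,\beta}$ is homogeneous of weight $+1$ (resp.\ $-1$) for the scaling automorphism of the first (resp.\ second) factor, and hence that $\sigma^*\cE^\bullet_{\beta,\alpha}$ is homogeneous of weight $-1$ (resp.\ $+1$) there; equivalently, $w^{\deg_1}$ rescales the K-theoretic Chern roots of $\cE^\bullet_{\alpha,\beta}$ by $w$ and those of $\sigma^*\cE^\bullet_{\beta,\alpha}$ by $w^{-1}$, while $w^{\deg_2}$ does the opposite. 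Feeding this into the defining formula \eqref{eq:mVOA-theta} and using the elementary identity $\wedge^\bullet_{-s}(x\cdot V) = \wedge^\bullet_{-sx}(V)$ (after passing to Chern roots) yields the two displayed equalities: the $z^{-1}$ attached to $\cE^\bullet_{\alpha,\beta}$ and the $z$ attached to $\cE^\bullet_{\beta,\alpha}$ in \eqref{eq:mVOA-theta} are arranged exactly so that the $w^{\pm1}$-rescalings reassemble into the substitutions $z \mapsto zw$ and $z \mapsto z/w$.

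Granting this, the lemma is bookkeeping. From Definition~\ref{def:operational-k-homology-boxtimes} and Definition~\ref{def:operational-k-homology-translation-operator} one has $D(w)\phi \boxtimes \psi = D(w)^{(1)}(\phi\boxtimes\psi)$ and $\phi \boxtimes D(w)\psi = D(w)^{(2)}(\phi\boxtimes\psi)$, since the operator $z^{\deg}$ defining $D(w)\phi$ and the operator $z^{\deg_1}$ (resp.\ $z^{\deg_2}$) defining $D(w)^{(1)}(\phi\boxtimes\psi)$ both refer to the same scaling automorphism of the relevant factor. Applying the left-hand side of the first asserted identity to a class $\cE \in K^\circ_\sT(\fM_{\alpha+\beta}\times S)$ and expanding the cap product and $D(w)^{(1)}$,
\[
  \bigl((D(w)\phi\boxtimes\psi)\cap\Theta^\bullet(z)\bigr)_S(\cE)
  = (\phi\boxtimes\psi)_S\bigl(w^{\deg_1}(\Theta^\bullet(z)\otimes\cE)\bigr)
  = (\phi\boxtimes\psi)_S\bigl((w^{\deg_1}\Theta^\bullet(z))\otimes(w^{\deg_1}\cE)\bigr),
\]
where the last step uses that $w^{\deg_1}$ is a ring homomorphism, being the composite of the ring maps $\Psi^*$ and $\cL_{[*/\bC^\times]}\mapsto w$ in \eqref{eq:degree-map}. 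The right-hand side applied to $\cE$ is $(\phi\boxtimes\psi)_S\bigl(\Theta^\bullet(zw)\otimes(w^{\deg_1}\cE)\bigr)$, so the two agree by the homogeneity identity above. The second equation is identical with $\deg_1$, $zw$ replaced by $\deg_2$, $z/w$ (or may be deduced from the first via the symmetry $\Theta^\bullet_{\beta,\alpha}(z^{-1}) = \Theta^\bullet_{\alpha,\beta}(z)$, just as in the proof of skew symmetry, Proposition~\ref{prop:operational-k-homology-skew-symmetry}).

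The one point I expect to require care — and the main, indeed only, obstacle — is the role of the multiplicative expansion. As in \S\ref{sec:operational-k-homology-vertex-operation-yields-a-field}, $\Theta^\bullet(z)$ is meaningful only after choosing such an expansion in $z$, and a priori the two sides involve different ones: on the left one expands $\Theta^\bullet(z)$ and then applies the power-series-in-$(1-w)$ operator $D(w)^{(1)}$, whereas on the right one must expand $\Theta^\bullet(zw)$. I would resolve this by noting that $w^{\deg_1}$ acts on Chern roots by the honest substitution $\cL \mapsto w^{\pm1}\cL$, so one may simply \emph{define} the expansion of $\Theta^\bullet(zw)$ on the right to be the image under $w^{\deg_1}$ of the chosen expansion of $\Theta^\bullet(z)$; with this compatible choice the displayed equalities are literal. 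Moreover the finiteness axiom guarantees, via the Localization Lemma of \S\ref{sec:operational-k-homology-vertex-operation-yields-a-field}, that only finitely many terms contribute after capping with $\phi\boxtimes\psi$, so both sides land in $K^\sT_\circ(\fM_{\alpha+\beta})[(1-z)^\pm]_\sT\pseries*{1-w}$ for any such compatible pair of expansions, which is all that is needed downstream.
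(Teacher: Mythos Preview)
Your proof is correct and follows essentially the same route as the paper: both reduce the lemma to the homogeneity identity $w^{\deg_1}\Theta^\bullet(z) = \Theta^\bullet(zw)$ and then unwind the definitions of $D(w)^{(i)}$ and $\cap$ to reach the same displayed equality $(\phi\boxtimes\psi)(w^{\deg_1}(\Theta^\bullet(z)\otimes\cE)) = (\phi\boxtimes\psi)(\Theta^\bullet(zw)\otimes w^{\deg_1}\cE)$. Your version is more expansive---you explain why the homogeneity holds from \eqref{eq:bilinear-complex-conditions} and address the expansion compatibility, both of which the paper leaves implicit---but the argument is the same. One minor slip: the test class $\cE$ should live in $K^\circ_\sT(\fM_\alpha\times\fM_\beta\times S)$ rather than $K^\circ_\sT(\fM_{\alpha+\beta}\times S)$, since no $\Phi_*$ has been applied in the statement of the lemma.
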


\begin{proof}
  Unrolling the notation, the first claimed equality is
  \[ (\phi \boxtimes \psi)\left(w^{\deg_1} (\Theta^\bullet(z) \otimes \cE)\right) \equiv (\phi \boxtimes \psi)\left( \Theta^\bullet(zw) \otimes w^{\deg_1}\cE \right) \]
  for $\cE \in K^\sT_\circ(\fM \times \fM)$, and it is an easy
  algebraic exercise to verify that $w^{\deg_1}\Theta^\bullet(z) =
  \iota_{w,z}^\sT\Theta^\bullet(wz)$ where $\iota_{w,z}^\sT$ is the
  equivariant expansion of Definition~\ref{def:equivariant-expansion}.
  The second claimed equality is completely analogous.
\end{proof}

\subsubsection{}

\begin{proposition}[Weak associativity] \label{prop:operational-k-homology-weak-associativity}
  $Y(Y(\phi, z)\psi, w)\xi \equiv Y(\phi, zw) Y(\psi, w)\xi$.
\end{proposition}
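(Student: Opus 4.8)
The plan is to prove weak associativity by unrolling both sides into their component maps and showing they are expansions of the same element of the OPE ring \eqref{eq:mvoa-ope-ring}, exactly as in the ordinary (co)homological case \cite{Gross2022}, with the only genuinely new bookkeeping coming from the multiplicative expansions needed to make $\Theta^\bullet$ a field. First I would fix $\phi \in K^\sT_\circ(\fM_\alpha)$, $\psi \in K^\sT_\circ(\fM_\beta)$, $\xi \in K^\sT_\circ(\fM_\gamma)$, and expand the left-hand side. Using Definition~\ref{def:operational-k-homology-vertex-operation} twice and Lemma~\ref{lem:operational-k-homology-translation-covariance} (``translation covariance'') to move the inner $D(z)^{(1)}$ past the outer $\Phi_*$ and $\cap \Theta^\bullet(w)$, one rewrites
\[ Y(Y(\phi,z)\psi, w)\xi = (\Phi_{\alpha+\beta,\gamma})_*(\Phi_{\alpha,\beta}\times\id)_* D(zw)^{(1)} D(w)^{(2)}\bigl((\phi\boxtimes\psi\boxtimes\xi)\cap \Xi(z,w)\bigr) \]
for an appropriate triple-factor class $\Xi(z,w)$ built from $\Theta^\bullet(z)$ pulled back along $\Phi_{\alpha,\beta}\times\id$ and $\Theta^\bullet(w)$ on the combined factor. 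The right-hand side unrolls similarly, using $D(z)^{(1)}D(w)^{(1)} = D(zw)^{(1)}$ and associativity of $\Phi$ (so $\Phi_{\alpha+\beta,\gamma}\circ(\Phi_{\alpha,\beta}\times\id) = \Phi_{\alpha,\beta+\gamma}\circ(\id\times\Phi_{\beta,\gamma})$ up to the evident reindexing), into the same shape with a triple-factor class $\Xi'(z,w)$.

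The heart of the matter is then the identity $\Xi(z,w) = \Xi'(z,w)$ as classes on $\fM_\alpha\times\fM_\beta\times\fM_\gamma$, which follows from the bilinearity relations \eqref{eq:bilinear-complex-conditions}: the pullback $(\Phi_{\alpha,\beta}\times\id)^*\cE^\bullet_{\alpha+\beta,\gamma} = \pi_{13}^*\cE^\bullet_{\alpha,\gamma}\oplus\pi_{23}^*\cE^\bullet_{\beta,\gamma}$ and likewise on the transposed factor, so after taking $\wedge^\bullet_{-z}$ and $\wedge^\bullet_{-z^{-1}}$ (which turn direct sums into tensor products) both $\Xi$ and $\Xi'$ become the same product of six $\wedge^\bullet$-factors in the pairwise cross-terms $\cE^\bullet_{\alpha,\beta}$, $\cE^\bullet_{\alpha,\gamma}$, $\cE^\bullet_{\beta,\gamma}$ (and their transposes), with the $z$, $w$, $zw$, $z/w$ variables distributed according to which factor the scaling automorphism acts on — precisely the content of $w^{\deg}\Theta^\bullet(z) = \Theta^\bullet(wz)$ already used in Lemma~\ref{lem:operational-k-homology-translation-covariance}. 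I would organize this as a short computation tracking the weight of each Chern root.

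Finally, I would address the meaning of ``$\equiv$''. Both sides are, a priori, elements of different iterated localized series rings (the left-hand side naturally lives in a ring with denominators $1 - t z^i$ and $1 - t w^j$, the right-hand side with denominators $1 - t(zw)^k$ and $1 - t w^j$), and the claim is that after applying the component maps to any $\cE \in K^\circ_\sT(\fM_{\alpha+\beta+\gamma}\times S)$, both are expansions of the common rational element of \eqref{eq:mvoa-ope-ring} obtained from the class $\Xi(z,w) = \Xi'(z,w)$ via the Localization lemma of \S\ref{sec:operational-k-homology-vertex-operation-yields-a-field} applied on $\fF_{\phi\boxtimes\psi\boxtimes\xi}$. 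Here the $\boxtimes$-triple $(\phi\boxtimes\psi)\boxtimes\xi \cong \phi\boxtimes(\psi\boxtimes\xi)$ via the commuting-squares discussion around \eqref{eq:operational-k-homology-boxtimes}, and its fixed stack $\fF$ carries honest K-theoretic Chern roots, so each of the six $\wedge^\bullet_{-x}(-\cL)^{-1}$ factors admits a multiplicative expansion by \eqref{eq:operational-k-homology-vertex-operation-Theta-terms} that terminates against $I^\circ(\fF)$; the different groupings of $Y$'s correspond exactly to the two iterated expansions $\iota_w^\sT$ and $\iota_z^\sT$ of the same rational function, just as in \cite[\S 3.3.6]{Frenkel2004}. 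I expect the main obstacle to be purely notational: keeping consistent track of which of the (up to six) cross-term factors gets which of the variables $z, w, zw, z/w$ after the two different ways of iterating $\Phi$ and $\Psi$, and verifying that the ``$\equiv$'' genuinely lands in the ring \eqref{eq:mvoa-ope-ring} — i.e. that no denominator of an unexpected form $1 - t z^i$ with $|i|>1$ survives on the right-hand side; this is guaranteed because $\cE^\bullet$ is weight $\pm 1$, so every Chern root's weight in $z$ is $\pm 1$ and in $w$ is $0$ or $\pm 1$, forcing all denominators into the allowed shapes $1-t z^{\pm1}$, $1 - t w^{\pm 1}$, $1 - t(zw)^{\pm 1}$.
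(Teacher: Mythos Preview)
Your proposal is correct and follows essentially the same approach as the paper: unroll both sides using push-pull, translation covariance (Lemma~\ref{lem:operational-k-homology-translation-covariance}), and the bilinearity $(\Phi_{\alpha,\beta}\times\id)^*\Theta^\bullet_{\alpha+\beta,\gamma}(w)=\Theta^\bullet_{\alpha,\gamma}(w)\otimes\Theta^\bullet_{\beta,\gamma}(w)$, then match the resulting triple-factor classes $\Theta^\bullet_{\alpha,\beta}(z)\otimes\Theta^\bullet_{\alpha,\gamma}(zw)\otimes\Theta^\bullet_{\beta,\gamma}(w)$ and conclude by associativity of $\Phi$. Your treatment of the ``$\equiv$'' via the Localization lemma on $\fF_{\phi\boxtimes\psi\boxtimes\xi}$ is actually more explicit than what the paper records (it leaves this implicit); note only that the variable $z/w$ appears in the intermediate translation-covariance step but not in the final product $\Xi$, which involves only $z$, $w$, and $zw$.
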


\begin{proof}
  Written out fully, the left hand side is
  \[ (\Phi_{\alpha+\beta,\gamma})_* D(w)^{(1)} \left(\left[ (\Phi_{\alpha,\beta})_* D(z)^{(1)} ((\phi \boxtimes \psi) \cap \Theta_{\alpha,\beta}^\bullet(z)) \right] \boxtimes \xi \right) \cap \Theta_{\alpha+\beta,\gamma}^\bullet(w). \]
  Since $(\Phi_{\alpha,\beta})_*(\cdots) \otimes \xi = (\Phi_{\alpha,\beta} \times \id)_*(\cdots \otimes \xi)$, by push-pull this is equal to
  \[ (\Phi_{\alpha+\beta,\gamma})_* D(w)^{(1)} (\Phi_{\alpha,\beta}^{(12)})_* \left(\left[ D(z)^{(1)} ((\phi \boxtimes \psi) \cap \Theta_{\alpha,\beta}^\bullet(z)) \right] \boxtimes \xi \right) \cap \left(\Theta_{\alpha,\gamma}^\bullet(w) \otimes \Theta_{\beta,\gamma}^\bullet(w)\right) \]
  where $\Phi_{\alpha,\beta}^{(ij)}$ means to apply
  $\Phi_{\alpha,\beta}$ to the $i$-th and $j$-th factors. Then apply
  translation covariance to the operator $D(z)^{(1)}$ to get
  \[ (\Phi_{\alpha+\beta,\gamma})_* D(w)^{(1)} (\Phi_{\alpha,\beta}^{(12)})_* D(z)^{(1)} \left( (\phi \boxtimes \psi \boxtimes \xi) \cap (\Theta_{\alpha,\beta}^\bullet(z) \otimes \Theta_{\alpha,\gamma}^\bullet(zw) \otimes \Theta_{\beta,\gamma}^\bullet(w)) \right). \]
  Writing out the right hand side fully and applying the same
  procedure gives
  \begin{align*}
    &(\Phi_{\alpha,\beta+\gamma})_* D(zw)^{(1)} \left( \left( \phi \boxtimes \left[ (\Phi_{\beta,\gamma})_* D(w)^{(1)} ((\psi \boxtimes \xi) \cap \Theta_{\beta,\gamma}^\bullet(w)) \right]\right) \cap \Theta_{\alpha,\beta+\gamma}^\bullet(zw) \right) \\
    &\equiv (\Phi_{\alpha,\beta+\gamma})_* D(zw)^{(1)} (\Phi_{\beta,\gamma}^{(23)})_* D(w)^{(2)} \left( (\phi \boxtimes \psi \boxtimes \xi) \cap (\Theta_{\beta,\gamma}^\bullet(w) \otimes \Theta_{\alpha,\beta}^\bullet(z) \otimes \Theta_{\alpha,\gamma}^\bullet(zw)) \right)
  \end{align*}
  Finally,
  $D(w)^{(1)} (\Phi^{(12)})_* D(z)^{(1)} = (\Phi^{(12)})_* D(zw)^{(1)} D(w)^{(2)}$
  while $D(zw)^{(1)}$ commutes with $(\Phi^{(23)})_*$. We are done by
  the associativity of $\Phi$.
\end{proof}

This concludes the proof of Theorem~\ref{thm:mVOA-monoidal-stack}. \qed

\subsubsection{}

\begin{corollary} \label{cor:mVOA-monoidal-stack-lie-algebra}
  There is a Lie algebra structure on $\bfK_\circ^\sT(\fM)^\pl$ with Lie
  bracket given by
  \begin{equation} \label{eq:monoidal-stack-lie-bracket}
    [\phi, \psi]_S(\cE) = \rho_K\left[ (\phi \boxtimes \psi)_S\left(\Theta_{\alpha,\beta}(z) \otimes z^{\deg_1} (\Phi_{\alpha,\beta} \times \id_S)^*\cE\right)\right]
  \end{equation}
  for $\phi \in \bfK_\circ^\sT(\fM_\alpha)^\pl$ and $\psi \in
  \bfK_\circ^\sT(\fM_\beta)^\pl$, and $\cE \in
  K^\circ_\sT(\fM_{\alpha+\beta} \times S)$.
\end{corollary}

\begin{proof}
  Apply Theorem~\ref{thm:vertex-to-lie-algebra}, noting that
  \eqref{eq:monoidal-stack-lie-bracket} is clearly valued in $\bk_\sT
  \subset \bk_\sT^\wedge$ because the input to $\rho_K$ is a rational
  function in $z$.
\end{proof}

\subsubsection{}

\begin{definition} \label{def:rigidification}
  Let $\fM^{\pl}$ be the {\it $\bC^\times$-rigidification}
  \cite{Abramovich2008} of $\fM$ with respect to the group
  $\bC^\times$ of scaling automorphisms. Roughly, this means to
  quotient away this $\bC^\times$ from all stabilizer groups. There is
  a canonical map
  \[ \Pi_\alpha^\pl\colon \fM_\alpha \to \fM_\alpha^{\pl} \]
  which is a principal $[*/\bC^\times]$-bundle for all $\alpha \neq
  0$. The notation $\pl$ stands for {\it projective linear},
  exemplified by $[*/\GL(n)]^{\pl} = [*/\PGL(n)]$.
  
  Group actions on stacks can be complicated because the group can act
  non-trivially on stabilizers. We assume that the $\sT$-action on
  $\fM$ commutes with scaling automorphisms, and therefore descends to
  a $\sT$-action on $\fM^{\pl}$.
\end{definition}

\subsubsection{}

By construction, for $\alpha \neq 0$, the image of the pullback
$(\Pi^{\pl}_\alpha)^*\colon K_{\sT}^\circ(\fM_\alpha^{\pl}) \to
K_{\sT}^\circ(\fM_\alpha)$ consists of degree-$0$ elements. For
instance, for $n > 0$,
\[ K_\sT^\circ([*/\PGL(n)]) = \bk_\sT[s_i/s_j : i \neq j]^{S_n} \subset \bk_\sT[s_1^\pm, \ldots, s_n^\pm]^{S_n} = K_\sT^\circ([*/\GL(n)]) \]
where the superscript $S_n$ means to take permutation invariants, and
$\deg s_i = 1$ for all $1 \le i \le n$. Conversely, a coherent sheaf
on $\fM_\alpha$ descends to $\fM_\alpha^\pl$ if and only if it has
degree zero (Definition~\ref{def:monoidal-stack-degree-map}), and a
morphism descends to a morphism of rigidified stacks if and only if it
preserves degree.

\subsubsection{}

\begin{lemma} \label{lem:k-homology-pl}
  The morphism $\Pi_\alpha^\pl\colon \fM_\alpha \to \fM_\alpha^\pl$ induces
  \[ (\Pi_\alpha^\pl)_*\colon \bK_\circ^\sT(\fM_\alpha)^\pl \to \bK_\circ^\sT(\fM_\alpha^\pl). \]
  If $\Pi_\alpha^\pl$ admits a section $I_\alpha$, then this is an
  isomorphism of $\bk_\sT$-modules with inverse $(I_\alpha)_*$.
\end{lemma}

Note that $\Pi_\alpha^\pl$ admits a section if and only if it is a
trivial $[*/\bC^\times]$-bundle, i.e. $\fM_\alpha = \fM_\alpha^\pl
\times [*/\bC^\times]$ \cite[Lemma 3.21]{Laumon2000}.

\begin{proof}
  A priori, $(\Pi_\alpha^\pl)_*$ is a morphism
  $\bK_\circ^\sT(\fM_\alpha) \to \bK_\circ^\sT(\fM_\alpha^\pl)$, but
  \[ \left[(\Pi_\alpha^\pl)_*(1 - D(z)) \phi\right]_S(\cE) = \phi_S\left((\Pi_\alpha^\pl \times \id)^*\cE\right) - \phi_S\left(z^{\deg} (\Pi_\alpha^\pl \times \id)^*\cE\right) = 0 \]
  for any $\phi \in \bK_\circ^\sT(\fM_\alpha)$ and $\cE \in
  K_\sT^\circ(\fM_\alpha \times S)$. So indeed $(\Pi_\alpha^\pl)_*$
  vanishes on $\im(1 - D(z))$. The remaining claim is clear.
\end{proof}

\subsection{On K-theory of quiver moduli}
\label{sec:mVOA-on-K-theory}

\subsubsection{}

The analogue in K-theory (as opposed to K-homology) of the vertex
algebra construction of \S\ref{sec:mVOA-on-K-homology} is not
well-defined in general: $K_{\sT}(-)$ only admits proper pushforwards,
while the vertex operation \eqref{eq:k-homology-vertex-operation}
requires a pushforward along $\Phi_{\alpha,\beta}\colon \fM_\alpha
\times \fM_\beta \to \fM_{\alpha+\beta}$. This is already not proper
in the simplest setting of $\fM_\alpha = [*/\GL(\alpha)]$, for $\alpha
\in \bZ_{\ge 0}$, where the fibers are $\GL(\alpha+\beta)/(\GL(\alpha)
\times \GL(\beta))$. Nonetheless, in this subsection, we equip
$K_\sT(\fM)$, for special $\fM$, with a {\it holomorphic} equivariant
multiplicative vertex algebra structure.

\subsubsection{}

\begin{theorem} \label{thm:k-theory-vertex-operation}
  Let $\fM$ be a graded monoidal stack with bilinear elements
  $\cE_{\alpha,\beta} \in K_\sT^\circ(\fM_\alpha \times \fM_\beta)$ as
  in \S\ref{sec:bilinear-complex}. Suppose that, for every $\alpha$
  and $\beta$, there exists a factorization
  \[ \Phi_{\alpha,\beta}\colon \fM_\alpha \times \fM_\beta \xhookrightarrow{\iota_{\alpha,\beta}} \fN_{\alpha,\beta} \xrightarrow{\phi_{\alpha,\beta}} \fM_{\alpha+\beta}, \]
  in the homotopy category of $\cat{Art}_\sT$, such that:
  \begin{enumerate}
  \item $\pi_{\alpha,\beta}\colon \fN_{\alpha,\beta} \to \fM_\alpha
    \times \fM_\beta$ is the total space of a $\sT$-equivariant vector
    bundle $\cV_{\alpha,\beta}$, and $\iota_{\alpha,\beta}$ is the the
    zero section;
  \item the K-theoretic Euler class $\theta_{\alpha,\beta}^\bullet
    \coloneqq \se(\cE_{\alpha,\beta}^\bullet +
    \sigma^*\cE_{\beta,\alpha}^\bullet + \cV_{\alpha,\beta})$ is
    well-defined (cf. \eqref{eq:mVOA-theta});
  \item there is a well-defined pushforward
    $(\phi_{\alpha,\beta})_*\colon \pi_{\alpha,\beta}^*(\theta_{\alpha,\beta}^\bullet) \otimes K_{\sT}(\fN_{\alpha,\beta}) \to K_{\sT}(\fM_{\alpha+\beta})$.
  \end{enumerate}
  Then, for $\cF_\alpha \in K_\sT(\fM_\alpha)$ and $\cF_\beta \in
  K_\sT(\fM_\beta)$, the vertex operation
  \begin{equation} \label{eq:k-theory-vertex-operation}
    Y(\cF_\alpha, z)\cF_\beta \coloneqq (\phi_{\alpha,\beta})_*\pi_{\alpha,\beta}^*\left((z^{-\deg}\cF_\alpha \boxtimes \cF_\beta) \otimes \theta_{\alpha,\beta}^\bullet\right)
  \end{equation}
  makes $K_{\sT}(\fM)$ into a holomorphic equivariant multiplicative
  vertex algebra.
\end{theorem}

\begin{proof}
  We explain how \eqref{eq:k-theory-vertex-operation} compares to the
  formula \eqref{eq:k-homology-vertex-operation} for $Y(-,z)$ in
  K-homology. We drop subscripts $\alpha$ and $\beta$ to avoid
  notational clutter whenever there is no ambiguity.

  Since $\pi$ is a vector bundle, $\pi^*\colon K_\sT(\fM_\alpha \times
  \fM_\beta) \to K_\sT(\fN_{\alpha,\beta})$ is an isomorphism
  \cite[Theorem 5.4.17]{Chriss1997}, and therefore its inverse is
  $\iota^*$. Hence we may factor
  \[ \Phi_* = \phi_* \iota_* = \phi_* (\pi^*\iota^*)\iota_* = \phi_* \pi^* (\se(\cV) \otimes -) \]
  where the last equality is the self-intersection formula for
  $\iota$. Using this, and supposing that $\Theta^\bullet(z) \coloneqq
  \se(z^{-1}\cE_{\alpha,\beta}^\bullet +
  z\sigma^*\cE_{\beta,\alpha}^\bullet)$, i.e. \eqref{eq:mVOA-theta},
  were well-defined, we have
  \begin{align}
    &\Phi_* z^{-\deg_1}\left((\cF_\alpha \boxtimes \cF_\beta) \otimes \Theta^\bullet(z)\right) \label{eq:k-theory-vertex-operation-naive} \\
    &= \phi_* \pi^*\left(\se(\cV) \otimes z^{-\deg_1} \left((\cF_\alpha \boxtimes \cF_\beta) \otimes \Theta^\bullet(z)\right)\right) \nonumber \\
    &= \phi_* \pi^*\left(z^{-\deg_1}(\cF_\alpha \boxtimes \cF_\beta) \otimes z^{-\deg_1}(\se(z^{\deg_1}\cV) \otimes \Theta^\bullet(z))\right). \nonumber
  \end{align}
  But by definition $\theta^\bullet = z^{-\deg_1}(\se(z^{\deg_1}\cV)
  \otimes \Theta^\bullet(z))$. So the last line is exactly
  \eqref{eq:k-theory-vertex-operation}.

  We check vertex algebra axioms. The identity element is $\vac
  \coloneqq \cO_{\fM_0}$, and by construction $D(z) \coloneqq
  z^{-\deg}$, so \eqref{eq:k-theory-vertex-operation-naive} is
  formally identical to \eqref{eq:k-homology-vertex-operation}. The
  proof that the latter makes $\bK_\circ^\sT(\fM)$ into an equivariant
  multiplicative vertex algebra can therefore be repeated for
  $K_{\sT}(\fM)$ verbatim. Note that the minus sign in $z^{-\deg}$ is
  necessary for
  Lemma~\ref{lem:operational-k-homology-translation-covariance} to
  hold.

  Finally, clearly \eqref{eq:k-theory-vertex-operation} is a Laurent
  polynomial in $z$. It therefore lives in
  $K_\sT(\fM_{\alpha,\beta})[z^\pm] \subset
  K_\sT(\fM_{\alpha,\beta})\pseries*{1-z}$, so the vertex algebra is
  holomorphic.
\end{proof}

\subsubsection{}

The hypotheses of Theorem~\ref{thm:k-theory-vertex-operation} hold in
(and were abstracted from) the important setting of quiver
representations, which will be the focus of the remainder of this
subsection.

A holomorphic additive vertex algebra is equivalent to an algebra with
derivation; analogously, a holomorphic multiplicative vertex algebra
where $D(z) = z^D$ for a grading operator $D$ is equivalent to a
$\bZ$-graded algebra, with graded product given by $Y(-, 1)$. In what
follows, we will see $\bZ$-graded algebra structures on $K_\sT(\fM)$
which should be compared to K-theoretic Hall algebras. In particular,
\eqref{eq:quiver-moduli-vertex-operation} is a sort of shuffle product
with trivial kernel, and can be compared to the cohomological Hall
algebra computations of \cite[\S 2]{Kontsevich2011}.

\subsubsection{}

\begin{definition}
  To fix notation, let $Q$ be a quiver with vertices indexed by $i \in
  I$, and
  \[ \fM_\alpha = \bigg[\prod_{i \to j} \Hom(\bC^{\alpha_i}, \bC^{\alpha_j}) \Big/ \prod_i \GL(\alpha_i)\bigg] \]
  be its associated moduli stack of representations with finite
  dimension vector $\alpha = (\alpha_i)_i \in \bZ_{\ge 0}^{|I|}$. Note
  that $\fM_\alpha$ is a smooth Artin stack. Let $\cV_\alpha =
  \bigoplus_i \cV_{\alpha,i}$ be the universal bundle on $\fM_\alpha$,
  with $\cV_{\alpha,i}$ corresponding to the $i$-th vertex. On
  $\fM_\alpha \times \fM_\beta$, following the philosophy of
  Remark~\ref{rem:bilinear-complex-from-POT}, let
  \begin{equation} \label{eq:quiver-moduli-deformation-theory}
    \cE_{\alpha,\beta}^\bullet \coloneqq \sum_i \cV_{\alpha,i}^\vee \boxtimes \cV_{\beta,i} - \sum_{i \to j} \cV_{\alpha,i}^\vee \boxtimes \cV_{\beta,j}
  \end{equation}
  This is the ``bilinear'' version of the tangent complex of
  $\fM_\alpha$; see \cite{Kirillov2016}. Write $\cE_{\alpha,\beta}^0$
  and $\cE_{\alpha,\beta}^1$ for the first and second terms in
  \eqref{eq:quiver-moduli-deformation-theory} respectively.

  Let a torus $\sT$ act on the pre-quotient $\prod_{i \to j}
  \Hom(\bC^{\alpha_i}, \bC^{\alpha_j})$ and assume it commutes with
  the action of the stabilizer $\GL(\alpha) \coloneqq \prod_i
  \GL(\alpha_i)$. Let $\sT_{\alpha} \subset \GL(\alpha)$ be the
  maximal torus and $S_{\alpha} \coloneqq \prod_i S_{\alpha_i}$ be the
  Weyl group. Then
  \[ K_{\sT}(\fM_\alpha) \cong K_{\sT}([*/\GL(\alpha)]) = \bk_{\sT \times \GL(\alpha)} = \bk_{\sT \times \sT_\alpha}^{S_\alpha} = \bk_{\sT}[\{\vec s_i^\pm\}_{i \in I}]^{S_{\alpha}} \]
  where $\vec s_i = (s_{i,j})_{j=1}^{\alpha_i}$ is the set of
  variables permuted by $S_{\alpha_i}$. Each variable $s_{i,j}$ has
  degree one with respect to the $[*/\bC^\times]$-action induced by
  the diagonal $\bC^\times \subset \GL(\alpha)$, i.e. $z^{\deg}
  s_{i,j} = z s_{i,j}$
  (Definition~\ref{def:monoidal-stack-degree-map}).
\end{definition}

\subsubsection{}

\begin{theorem} \label{thm:quiver-moduli-vertex-algebra}
  The Artin stack $\fM = \bigsqcup_\alpha \fM_\alpha$ is graded
  monoidal, with bilinear elements $\cE_{\alpha,\beta}$ as in
  \S\ref{sec:bilinear-complex}. It satisfies the conditions of
  Theorem~\ref{thm:k-theory-vertex-operation} by setting
  \[ \fN_{\alpha,\beta} \coloneqq \bigg[\prod_{i \to j} \Hom(\bC^{\alpha_i} \oplus \bC^{\beta_i}, \bC^{\alpha_j} \oplus \bC^{\beta_j}) \Big/ \GL(\alpha) \times \GL(\beta)\bigg]. \]
  For elements $f \in K_{\sT}(\fM_\alpha) = \bk_\sT[\{\vec
    s_i^\pm\}]^{S_\alpha}$ and $g \in K_{\sT}(\fM_\beta) = \bk_\sT[\{\vec
    t_j^\pm\}^{S_\beta}]$, the resulting vertex operation is
  \begin{equation} \label{eq:quiver-moduli-vertex-operation}
    Y(f, z) g = \frac{1}{\alpha! \beta!} \sum_{\sigma \in S_{\alpha+\beta}} \sigma \cdot (f\big|_{\vec s_i\mapsto z\vec s_i} g)
  \end{equation}
  where $S_{\alpha_i+\beta_i}$ permutes
  $\{\vec s_i\} \cup \{\vec t_i\}$ and
  $\alpha! \coloneqq \prod_i \alpha_i!$ and likewise for $\beta!$.
\end{theorem}

\begin{proof}
  It is clear that $\fM$ is graded monoidal: on quiver
  representations, $\Phi$ and $\Psi$ are direct sum and the action of
  scaling automorphisms respectively. Also the
  $\cE_{\alpha,\beta}^\bullet$ are clearly bilinear.
  
  We verify the conditions of
  Theorem~\ref{thm:k-theory-vertex-operation}. The natural projection
  $\pi_{\alpha,\beta}\colon \fN_{\alpha,\beta} \to \fM_\alpha \times
  \fM_\beta$ is the vector bundle whose fibers are $\prod_{i \to j}
  \Hom(\bC^{\beta_i}, \bC^{\alpha_j}) \oplus \Hom(\bC^{\alpha_i},
  \bC^{\beta_j})$ and may therefore be identified with the total space
  of
  \[ \cV_{\alpha,\beta} \coloneqq \cE^1_{\alpha,\beta} \oplus \sigma^*\cE^1_{\beta,\alpha}. \]
  Then the K-theoretic Euler class of
  \[ \cE_{\alpha,\beta}^\bullet + \sigma^*\cE_{\beta,\alpha}^\bullet + \cV_{\alpha,\beta} = \cE_{\alpha,\beta}^0 + \sigma^*\cE_{\beta,\alpha}^0 \]
  is well-defined because it is a vector bundle. Finally, the natural
  projection $\phi_{\alpha,\beta}\colon \fN_{\alpha,\beta} \to
  \fM_{\alpha+\beta}$ is a $\GL(\alpha+\beta) / (\GL(\alpha) \times
  \GL(\beta))$-fibration, so the pushforward $(\phi_{\alpha,\beta})_*$
  may be defined and computed via $(\sT \times
  \sT_{\alpha+\beta})$-equivariant localization (cf. the case of flag
  varieties $G/B$, which produces the Weyl character formula). Namely,
  fixed points are indexed by $\sigma \in S_{\alpha+\beta}/(S_\alpha
  \times S_\beta)$, and the tangent space at $\sigma$ is
  \[ \sigma \cdot \left(\cE^0_{\alpha+\beta,\alpha+\beta} \ominus \cE^0_{\alpha,\alpha} \ominus \cE^0_{\beta,\beta}\right) = \sigma \cdot (\cE^0_{\alpha,\beta} \oplus \cE^0_{\beta,\alpha}) = \sigma \cdot \theta_{\alpha,\beta}^\bullet. \]
  Identifying $K_\sT(\fM_{\alpha+\beta}) \cong \bk_{\sT \times
    \sT_\alpha \times \sT_\beta}^{S_{\alpha+\beta}}$ and
  $K_\sT(\fM_{\alpha} \times \fM_\beta) \cong \bk_{\sT \times
    \sT_\alpha \times \sT_\beta}^{S_\alpha \times S_\beta}$, the
  pushforward is
  \begin{align*}
    (\phi_{\alpha,\beta})_*\colon \bk_{\sT \times \sT_\alpha \times \sT_\beta,\loc}^{S_\alpha \times S_\beta} &\to \bk_{\sT \times \sT_\alpha \times \sT_\beta,\loc}^{S_{\alpha+\beta}} \\
    \cF &\mapsto \sum_{\sigma \in S_{\alpha+\beta}/S_\alpha \times S_\beta} \sigma \cdot \frac{\cF}{\theta_{\alpha,\beta}^\bullet}.
  \end{align*}
  A factor of $\theta_{\alpha,\beta}^\bullet$ in $\cF$ therefore
  cancels out the denominator, and the result lands in the sub-module
  $\bk_{\sT \times \sT_\alpha \times \sT_\beta}^{S_{\alpha+\beta}}
  \subset \bk_{\sT \times \sT_\alpha \times
    \sT_\beta,\loc}^{S_{\alpha+\beta}}$, as desired.
  
  To obtain \eqref{eq:quiver-moduli-vertex-operation}, note that the
  input $\cF = f \boxtimes g \in K_{\sT}(\fM_\alpha \times \fM_\beta)$
  is already symmetric with respect to $S_\alpha \times S_\beta$,
  hence for such $\cF$ we have $\sum_{\sigma \in
    S_{\alpha+\beta}/S_\alpha \times S_\beta} = \frac{1}{\alpha!\beta!}
  \sum_{\sigma \in S_{\alpha+\beta}}$.
\end{proof}

\section{Wall-crossing}
\label{sec:wall-crossing}

\subsection{The moduli stacks}
\label{sec:moduli-stacks}

\subsubsection{}
\label{sec:moduli-stack}

Fix a Noetherian $\bC$-linear abelian category $\cat{A}$ with a moduli
stack $\fM = \bigsqcup_\alpha \fM_\alpha$ which is Artin and locally
of finite type.\footnote{With appropriate care, $\cat{A}$ may be
replaced by an exact subcategory $\cat{B} \subset \cat{A}$ closed
under isomorphisms in $\cat{A}$ (i.e. if $E \cong F$ in $\cat{A}$ with
$E \in \cat{B}$ then $F \in \cat{B}$) and direct summands in $\cat{A}$
(i.e. if $E, F \in \cat{A}$ with $E \oplus F \in \cat{B}$, then $E, F
\in \cat{B}$).} Here $\alpha$ ranges over (some quotient of) the
Grothendieck group $K_0(\cat{A})$, and $\fM_\alpha$ is the component
parameterizing objects of class $\alpha$, such that $\fM_0 = \{[0]\}$
contains only the zero object. Direct sum and composition with scaling
automorphisms induce maps
\begin{align*}
  \Phi_{\alpha,\beta} &\colon \fM_\alpha \times \fM_\beta \to \fM_{\alpha+\beta} \\
  \Psi_\alpha &\colon [*/\bC^\times] \times \fM_\alpha \to \fM_\alpha
\end{align*}
respectively, which make $\fM$ into a graded monoidal stack
(Definition~\ref{def:graded-monoidal-stack}). Assume also that $\fM$
has an obstruction theory which depends bilinearly on the universal
family of objects in $\fM$, as in
Remark~\ref{rem:bilinear-complex-from-POT}, and let
$\cE_{\alpha,\beta}^\bullet$ be the resulting bilinear elements.

In what follows, we will assume everything is equivariant for the
action of a torus $\sT$. 

\subsubsection{}

\begin{example} \label{ex:moduli-of-sheaves}
  Let $X$ be a smooth quasi-projective variety of dimension $\le 2$.
  Let $\cat{A}$ be the abelian category of coherent sheaves on $X$,
  or, more generally, some suitable heart in $D^b\cat{Coh}(X)$. A
  moduli stack $\fM$ for $\cat{A}$ is known to exist
  \cite{Lieblich2006}.
  \begin{enumerate}
  \item The maps $\Phi$ and $\Psi$ are given, respectively, by direct
    sum (of sheaves, or of quiver representations) and by composition
    with the subgroup $\bC^\times$ of scaling automorphisms;
  \item The moduli stack $\fM$ has a perfect obstruction theory given
    by $\Ext(E, E)$, which is bilinear as a function of $[E] \in \fM$;
  \end{enumerate}
  Note that (i) holds for $X$ of arbitrary dimension, but (ii) fails
  for dimension $> 2$. If a torus $\sT$ acts on $X$, then everything
  is equivariant for the induced $\sT$-action on $\fM$.
\end{example}

\subsubsection{}

\begin{lemma} \label{lem:obstruction-theory-pl}
  Let $\phi\colon \bE_{\fM} \to \bL_{\fM}$ be an obstruction theory on
  $\fM$. If $\bE_{\fM}$ has degree zero, i.e. is
  $\bC^\times$-invariant, then there is a natural obstruction theory
  $\phi^\pl\colon \bE_{\fM^\pl} \to \bL_{\fM^\pl}$ compatible under
  $\Pi^\pl$ with $\phi$.
\end{lemma}

\begin{proof}
  Consider the compatibility diagram
  \eqref{eq:obstruction-theories-compatibility} for the morphism
  $\Pi^\pl\colon \fM \to \fM^\pl$. In the given situation, we get an
  induced $\phi^\pl\colon \bF \to (\Pi^\pl)^*\bL_{\fM^\pl}$ where $\bF
  = \cone(\bE_\fM \to \bL_{\Pi^\pl})[-1]$. Both terms here are
  $\bC^\times$-invariant, hence so is $\bF$, which therefore has the
  form $\bF = (\Pi^\pl)^*\bE_{\fM^\pl}$ for some $\bE_{\fM^\pl}$. One
  can check via the long exact sequence that $\phi^\pl$ is still an
  obstruction theory.
\end{proof}

\subsubsection{}
\label{sec:pairs-stack}

\begin{definition} \label{def:pairs-stack}
  Let $\cat{Vect}$ be the category of vector spaces. A {\it framing
    functor} for $\cat{A}$ is a $\bC$-linear exact functor
  \[ \Fr\colon \cat{A}^{\Fr} \to \cat{Vect} \]
  on a full exact $\sT$-invariant subcategory $\cat{A}^{\Fr} \subset
  \cat{A}$, closed under isomorphisms in $\cat{A}$ and direct summands
  in $\cat{A}$ (i.e. if $E, F \in \cat{A}$ with $E \oplus F \in
  \cat{A}^{\Fr}$, then $E, F \in \cat{A}^{\Fr}$), such that:
  \begin{enumerate}
  \item the moduli substack $\fM^{\Fr}_\alpha \subset \fM_\alpha$, of
    objects in $\cat{A}^{\Fr}$ of class $\alpha$, is open;
  \item $\Hom(E, E) \to \Hom(\Fr(E), \Fr(E))$ is injective for all $E
    \in \cat{A}^{\Fr}$, so $\Fr(E) \neq 0$ for $E \neq 0$;
  \item $\fr(E) \coloneqq \dim \Fr(E)$ depends only on the class of
    $E$.
  \end{enumerate}
  Following \cite{Joyce2021}, wall-crossing in $\cat{A}$ is studied
  using the {\it auxiliary category of pairs}
  \[ \tilde{\cat{A}}^{\Fr} \coloneqq \{\rho\colon V \to \Fr(E) : V \in \cat{Vect}, \, E \in \cat{A}, \, \rho \text{ is linear}\} \]
  associated to a framing functor $\Fr$. A morphism from $\rho\colon V
  \to \Fr(E)$ to $\rho'\colon V' \to \Fr(E')$ in
  $\tilde{\cat{A}}^{\Fr}$ is given by a linear map $V \to V'$ and a
  morphism $E \to E'$ in $\cat{A}$ making the obvious square commute.
  A scaling automorphism $\lambda \in \bC^\times$ acts on $\rho\colon
  V \to \Fr(E)$ as $(V, E) \mapsto (\lambda V, \lambda E)$.
\end{definition}

\subsubsection{}

Let $\tilde\fM^{\Fr} = \bigsqcup_{\alpha,d}
\tilde\fM^{\Fr}_{\alpha,d}$ be the moduli stack of objects in
$\tilde{\cat{A}}^{\Fr}$ with $E \in \cat{A}^{\Fr}$, and let $\alpha$
be the class of $E$ and $d \coloneqq \dim V$. Clearly the $\sT$-action
lifts from $\fM$ to $\tilde\fM^{\Fr}$. Let
\[ \tilde\Pi^{\Fr,\pl}_{\alpha,d}\colon \tilde\fM^{\Fr}_{\alpha,d} \to \tilde\fM_{\alpha,d}^{\Fr,\pl} \]
denote the rigidification maps, and let
\[ \pi_{\fM^{\Fr}_\alpha}\colon \tilde\fM^{\Fr}_{\alpha,d} \to \fM_\alpha^{\Fr}, \qquad \pi_{\fM^{\Fr,\pl}_\alpha}\colon \tilde\fM^{\Fr,\pl}_{\alpha,d} \to \fM_\alpha^{\Fr,\pl} \]
be the forgetful maps. If $\cV$ and $\cFr(\cE)$ are the universal
bundles for $V$ and $\Fr(E)$ respectively,
\[ \tilde\fM^{\Fr}_{\alpha,d} = \tot\left(\cV^\vee \otimes \cFr(\cE)\right) \to \fM^{\Fr}_{\alpha} \times [*/\GL(d)]. \]
So $\pi_{\fM^{\Fr}_\alpha}$ is smooth, with relative cotangent
complex $\bL_{\pi_{\fM^{\Fr}_\alpha}} = \cFr(\cE)^\vee \otimes \cV -
\cV^\vee \otimes \cV \in K_\sT^\circ(\tilde\fM_{\alpha,d}^{\Fr})$.

\subsubsection{}

We only use the case $d = 1$, where the rigidification map
$\tilde\Pi^\pl_{\alpha,1}$ can be described non-canonically as fixing
an isomorphism $\bC \xrightarrow{\sim} V$, by identifying the
$[*/\bC^\times]$ fiber with the moduli stack of $1$-dimensional vector
spaces $V$. Let
\[ \tilde I_\alpha\colon \tilde\fM^{\Fr,\pl}_{\alpha,1} \to \tilde\fM^{\Fr}_{\alpha,1} \]
be the map which forgets this isomorphism. This is a non-zero section
of $\tilde\Pi_{\alpha,1}^\pl$, which is therefore a trivial
$[*/\bC^\times]$-bundle. (In particular, the second part of
Lemma~\ref{lem:k-homology-pl} applies, though we will not need it.) As
in \cite[Equation (5.26)]{Joyce2021}, there is a commutative diagram
\begin{equation} \label{eq:auxiliary-stack-rigidified-projection}
  \begin{tikzcd}[column sep=huge]
    \tilde\fM^{\Fr}_{\alpha,1} \ar[bend left]{d}{\tilde\Pi^\pl_{\alpha,1}} \ar{r}{\pi_{\fM^{\Fr}_\alpha}} & \fM^{\Fr}_\alpha \ar{d}{\Pi^\pl_\alpha} \\
    \tilde\fM^{\Fr,\pl}_{\alpha,1} \ar[bend left]{u}{\tilde I_\alpha} \ar[dashed]{ur}[swap]{\pi^\pl_{\fM^{\Fr}_\alpha}} \ar{r}[swap]{\pi_{\fM^{\Fr,\pl}_\alpha}} & \fM^{\Fr,\pl}_\alpha
  \end{tikzcd}
\end{equation}
which we use to define the dashed diagonal map
$\pi^\pl_{\fM^{\Fr}_\alpha}$ for (important!) later use in
\S\ref{sec:semistable-invariants}.

\subsubsection{}

\begin{definition} \label{def:stability-condition}
  A {\it stability condition} \cite{Joyce2021} on an abelian category
  $\cat{A}$ is a function $\tau$ from classes $\alpha$ into some
  totally-ordered set, such that, for a short exact sequence $0 \to A
  \to B \to C \to 0$ in $\cat{A}$, either
  \[ \tau(A) > \tau(B) > \tau(C) \;\text{ or }\; \tau(A) = \tau(B) = \tau(C) \;\text{ or }\; \tau(A) < \tau(B) < \tau(C). \]
  An object $E \in \cat{A}$ is {\it $\tau$-stable} (resp. {\it
    $\tau$-semistable}) if $\tau(E') < \tau(E/E')$ (resp. $\tau(E')
  \le \tau(E/E')$) for all sub-objects $E' \subset E$ with $E' \neq 0,
  E$. We say $E$ is {\it strictly $\tau$-semistable} if it is
  $\tau$-semistable but not $\tau$-stable. Let
  \[ \fM_\alpha^\st(\tau) \subset \fM_\alpha^{\sst}(\tau) \subset \fM_\alpha^\pl \]
  be the (open) moduli substacks of $\tau$-stable and
  $\tau$-semistable objects respectively, and similarly for the
  substack $\fM_\alpha^{\Fr} \subset \fM_\alpha$. Standard arguments
  show that if $E$ is stable, then $\Hom(E, E) = \bC$, so
  $\fM_\alpha^\st(\tau)$ is always an algebraic space.

  Given a stability condition $\tau$ on $\cat{A}$, and assuming
  \ref{it:stability-rank-function}, Joyce constructs \cite[Example
    5.6]{Joyce2021} a stability condition $\tilde\tau$ on
  $\tilde{\cat{A}}^{\Fr}$, with no strictly semistable objects, such
  that $\rho\colon V \to \Fr(E)$ is $\tilde\tau$-semistable if and
  only if $E$ is $\tau$-semistable, $\rho \neq 0$, and if $\rho(V)
  \subset \Fr(E')$ for some $0 \neq E' \subsetneq E$ then $\tau(E') <
  \tau(E/E')$. Consequently
  \[ \tilde\fM_\alpha^{\Fr,\st}(\tilde\tau) = \tilde\fM_\alpha^{\Fr,\sst}(\tilde\tau) \subset \tilde\fM_\alpha^{\Fr,\pl} \]
  and the first two are algebraic spaces.
\end{definition}

\subsubsection{}

\begin{example}
  Let $X$ be a smooth quasi-projective variety with an ample line
  bundle $\cL$, and let $\cat{A} = \cat{Coh}(X)$. Let $\cat{A}_k
  \subset \cat{A}$ be the exact sub-category of {\it $k$-regular}
  objects, namely $E$ such that $H^i(E \otimes \cL^{k-i}) = 0$ for all
  $i > 0$. Recall that $\cat{A}_k \subset \cat{A}_{k+1} \subset
  \cdots$ \cite[Lemma 1.7.2]{Huybrechts2010}. One can check that
  \[ \Fr_k\coloneqq E \mapsto H^0(E \otimes \cL^k) \]
  is therefore a framing functor on $\cat{A}$ which is exact on
  $\cat{A}_k$, i.e. satisfying the conditions in
  Definition~\ref{def:pairs-stack}. Then $\tilde\fM^{\Fr_k}$ becomes a
  moduli stack of pairs in the style of Joyce--Song \cite[Definition
    12.2]{Joyce2012}.

  In $\cat{A}$, one can formulate Gieseker stability in terms of a
  stability condition $\tau$ \cite[Definition 7.7]{Joyce2021}. The
  auxiliary stability condition $\tilde\tau$ from
  Definition~\ref{def:stability-condition} becomes the notion of
  stability for Joyce--Song pairs.
\end{example}

\subsection{Semistable invariants}
\label{sec:semistable-invariants}

\subsubsection{}
\label{sec:semistable-invariants-assumptions}

Throughout this subsection, fix a stability condition $\tau$ on
$\cat{A}$. We make the following assumptions (cf. \cite[Assumptions
  5.1, 5.2]{Joyce2021}).
\begin{enumerate}
\item\label{it:proper-stable-locus} If there are no strictly
  $\tau$-semistable objects of class $\alpha$, then
  $\fM_\alpha^{\sst}(\tau) = \fM_\alpha^{\st}(\tau)$ have $\sT$-fixed
  loci which are proper schemes in $\cat{C}$ (e.g. quasi-projective
  schemes; see \S\ref{sec:k-homology-support-category}), onto which
  the restriction of the obstruction theory on $\fM^{\pl}$
  (Lemma~\ref{lem:obstruction-theory-pl}) is perfect.

\item For any given class $\alpha$, there exists a framing functor
  $\Fr$ such that $\fM_\alpha^{\sst}(\tau) \subset
  \fM_\alpha^{\Fr,\pl}$.

\item\label{it:stability-rank-function} There exists a ``rank''
  function $r(\alpha) \in \bZ_{> 0}$ such that if $\tau(\alpha) =
  \tau(\beta)$ then $r(\alpha + \beta) = r(\alpha) + r(\beta)$. This
  is used in inductive proofs and to construct stability conditions on
  $\tilde{\cat{A}}^{\Fr}$ and the master space of
  \S\ref{sec:semistable-invariants-master-space}.
\end{enumerate}
Furthermore, the following assumptions, for all $\alpha$, enable us to
do enumerative geometry on
$\tilde\fM_{\alpha,1}^{\Fr,\sst}(\tilde\tau)$, and also on the master
space $\bM_{\alpha, \vec 1}^\sst(\pmb\tau)$ of
\S\ref{sec:semistable-invariants-master-space} which carries an extra
$\bC^\times$-action.
\begin{enumerate}[resume]
\item\label{it:auxiliary-proper-stable-locus} The (semi)stable loci
  $\tilde\fM_{\alpha,1}^{\Fr,\sst}(\tilde\tau)$ have $\sT$-fixed loci
  which are proper schemes in $\cat{C}$.
\item\label{it:master-space-proper-stable-locus} For any $(\sT \times
  \bC^\times)$-weight $w$ with non-trivial $\bC^\times$-component, the
  (semi)stable loci $\bM_{\alpha,\vec 1}^{\sst}(\pmb\tau)$ have
  $\sT_w$-fixed loci which are proper schemes in $\cat{C}$, where
  $\sT_w \subset \ker(w)$ is the maximal torus.
\item\label{it:auxiliary-obstruction-theories} There is a perfect
  obstruction theory on $\tilde\fM_{\alpha,1}^{\Fr,\sst}(\tilde\tau)$
  and on $\bM_{\alpha,\vec 1}^{\sst}(\pmb\tau)$, compatible
  (Definition~\ref{def:obstruction-theories}) under the forgetful maps
  $\pi_{\fM^{\Fr,\pl}_\alpha}$ and $\pi_{\fM_\alpha^{\Fr_1 \cap
      \Fr_2,\pl}}$ with the one on $\fM^{\pl}$.
\end{enumerate}

\subsubsection{}

\begin{remark}
  We comment on how these assumptions may be satisfied in practice.
  The stable (=semistable) loci in \ref{it:proper-stable-locus},
  \ref{it:auxiliary-proper-stable-locus} and
  \ref{it:master-space-proper-stable-locus} are often
  (quasi-)projective schemes via a GIT construction. Even if this is
  not the case, properness can typically be verified by semistable
  reduction, e.g. \cite[Appendix 2.B]{Huybrechts2010}. If we disregard
  the resolution property (see
  \S\ref{sec:k-homology-support-category}), being a ``proper scheme in
  $\cat{C}$'' is equivalent to being a ``proper scheme''.

  The assumption \ref{it:auxiliary-obstruction-theories} can be
  satisfied by smooth pullback, along the smooth forgetful maps, of
  the obstruction theory on $\fM$, as long as there is some mechanism
  to ensure the resulting pulled-back obstruction theories are
  perfect. This is automatic if in fact the obstruction theory on
  $\fM$ is already perfect, e.g. as in
  Example~\ref{ex:moduli-of-sheaves}. In more general settings, extra
  work is required; see Remark~\ref{rem:symmetrized-framework}.

  Note that even when a smooth pullback does not obviously exist, one
  may use {\it almost-perfect} obstruction theories \cite{Kiem2020},
  for which smooth pullbacks always exist \cite[\S 2]{Kuhn2023}, and
  which suffice for enumerative geometry \cite{Kiem2020a}. For a
  fairly non-trivial example of this, see \cite[\S 5, \S 6]{Kuhn2023}.
\end{remark}

\subsubsection{}

\begin{definition} \label{def:universal-enumerative-invariant}
  Suppose there are no $\tau$-semistable objects of class $\alpha$.
  Using assumption~\ref{it:proper-stable-locus}, define the {\it
    universal enumerative invariant} $\sZ_\alpha \in
  K_\circ^\sT(\fM_\alpha^\pl)_\loc$ by
  \begin{align*}
    \sZ_\alpha(\tau)
    &\coloneqq \chi\left(\fM^{\sst}_\alpha(\tau), \cO^\vir_{\fM^{\sst}_\alpha(\tau)} \otimes -\right) \\
    &\coloneqq \chi\left(\fM^{\sst}_\alpha(\tau)^{\sT}, \frac{\cO^\vir_{\fM^{\sst}_\alpha(\tau)^\sT}}{\se(\cN_\iota^\vir)} \otimes (j \circ \iota)^*(-)\right)
  \end{align*} 
  where $\fM^{\sst}_\alpha(\tau)^\sT \xrightarrow{\iota}
  \fM^{\sst}_\alpha(\tau) \xrightarrow{j} \fM^\pl_\alpha$ are the
  natural inclusions. The first line is purely suggestively notation;
  the second line is the actual definition, using the shorthand of
  \S\ref{sec:universal-invariants-shorthand}.

  Similarly, for any $\alpha$, since
  $\tilde\fM_{\alpha,1}^{\Fr,\sst}(\tilde\tau)$ has no strictly
  $\tilde\tau$-semistable objects, let
  \[ \tilde\cO^\vir \coloneqq \cO_{\tilde\fM_{\alpha,1}^{\Fr,\sst}(\tilde\tau)}^\vir \otimes c_{\text{rank}}^K(\bL_{\pi_{\fM_\alpha^{\Fr}}}^\vee) \]
  and define the {\it framed universal enumerative invariant}
  \begin{equation} \label{eq:universal-enumerative-invariant-pairs}
    \tilde\sZ_\alpha^{\Fr}(\tilde\tau) \coloneqq \chi\left(\tilde\fM_{\alpha,1}^{\Fr,\sst}(\tilde\tau), \tilde\cO^{\vir} \otimes (\pi^\pl_{\fM^{\Fr}_\alpha})^*(-)\right) \in K^\sT_\circ(\fM_\alpha)_{\loc},
  \end{equation}
  using the map $\pi^\pl_{\fM^{\Fr}_\alpha}$ from
  \eqref{eq:auxiliary-stack-rigidified-projection}. This is
  equivalently the universal enumerative invariant of
  $\tilde\fM_{\alpha,1}^{\Fr,\sst}(\tilde\tau) \subset
  \tilde\fM_{\alpha,1}^{\Fr,\pl}(\tilde\tau)$, capped with
  $c_{\text{rank}}^K(\bL_{\pi_{\fM_\alpha^{\Fr}}}^\vee)$, and pushed forward
  via $\pi^\pl_{\fM^{\Fr}_\alpha}$.
\end{definition}

\subsubsection{}
\label{sec:semistable-invariants-characterization}

For every class $\alpha$, even if there are strictly $\tau$-semistable
objects in $\fM_\alpha$, the goal is to define a {\it semistable
  invariant}
\[ \sZ_\alpha(\tau) \in K^\sT_\circ(\fM_\alpha^\pl)_{\loc,\bQ} \coloneqq K^\sT_\circ(\fM_\alpha^\pl)_{\loc} \otimes_{\bZ} \bQ \]
which represents ``pairing with the virtual cycle'' of the
corresponding open substack
$\fM^{\sst}_\alpha(\tau) \subset \fM_\alpha^\pl$ of $\tau$-semistable
objects. These invariants will be characterized by the following
properties.
\begin{enumerate}
\item \label{item:vss-no-strictly-semistables} If there are no
  strictly $\tau$-semistable objects of class $\alpha$, then
  $\sZ_\alpha(\tau)$ is the universal enumerative invariant of
  Definition~\ref{def:universal-enumerative-invariant}.
\item \label{item:vss-isomorphic-moduli} If $\tau, \tau'$ are two
  stability conditions with $\fM_\alpha^{\sst}(\tau) =
  \fM_\alpha^{\sst}(\tau')$, then $\sZ_\alpha(\tau) =
  \sZ_\alpha(\tau')$.
\item \label{item:vss-pairs-normalization} For any framing functor
  $\Fr$ such that $\fM^\sst_\alpha(\tau) \subset
  \fM_\alpha^{\Fr,\pl}$,
  \begin{equation} \label{eq:semistable-invariants-pairs-formula}
    \tilde\sZ_\alpha^{\Fr}(\tilde\tau) = \sum_{\substack{n>0\\\alpha_1 + \cdots + \alpha_n = \alpha\\\forall i: \,\tau(\alpha_i) = \tau(\alpha),\\\;\;\fM_{\alpha_i}^{\sst}(\tau) \neq \emptyset}} \frac{1}{n!} \Big[\sz_{\alpha_n}(\tau), \Big[\cdots \Big[\sz_{\alpha_3}(\tau), \Big[\sz_{\alpha_2}(\tau), \fr(\alpha_1) \sz_{\alpha_1}(\tau)\Big]\Big] \cdots\Big]\Big]
  \end{equation}
  for elements $\sz_\beta(\tau) \in K^\sT_\circ(\fM_\beta)_\bQ^\pl$
  such that $\sZ_\beta(\tau) = (\Pi_\beta^\pl)_* \sz_\beta(\tau)$.
  Here we are using the Lie bracket from
  Corollary~\ref{cor:mVOA-monoidal-stack-lie-algebra}.
\end{enumerate}
To be clear, the sum in \eqref{eq:semistable-invariants-pairs-formula}
is a finite sum \cite[Lemma 9.1]{Joyce2021}. Also, throughout this
subsection, we use the same symbol to denote a class in
$K_\circ^\sT(\fM_\alpha)$ and its image in
$K_\circ^\sT(\fM_\alpha)^\pl$.

\subsubsection{}

\begin{theorem}[Semistable invariants] \label{thm:semistable-invariants}
  Suppose the assumptions of
  \S\ref{sec:semistable-invariants-assumptions} hold. For classes
  $\alpha$ and framing functors $\Fr$ such that $\fM_\alpha^\sst(\tau)
  \subset \fM_\alpha^{\Fr,\pl}$, the equations
  \begin{equation} \label{eq:semistable-invariants-pairs-formula-naive}
    \tilde\sZ_\alpha^{\Fr}(\tilde\tau) = \sum_{\substack{n>0\\\alpha_1 + \cdots + \alpha_n = \alpha\\\forall i: \,\tau(\alpha_i) = \tau(\alpha),\\\;\;\fM_{\alpha_i}^{\sst}(\tau) \neq \emptyset}} \frac{1}{n!} \Big[\sz^{\Fr}_{\alpha_n}(\tau), \Big[\cdots \Big[\sz_{\alpha_3}^{\Fr}(\tau), \Big[\sz_{\alpha_2}^{\Fr}(\tau), \fr(\alpha_1) \sz_{\alpha_1}^{\Fr}(\tau)\Big]\Big] \cdots\Big]\Big]
  \end{equation}
  uniquely define elements $\sz^{\Fr}_\alpha(\tau) \in
  K^\sT_\circ(\fM_\alpha)_{\loc,\bQ}^\pl$ that are independent of
  $\Fr$, and the resulting $\sZ_\alpha(\tau) \coloneqq
  (\Pi_\alpha^\pl)_* \sz_\alpha^{\Fr}(\tau)$ satisfy
  \ref{item:vss-no-strictly-semistables} and
  \ref{item:vss-isomorphic-moduli} (and
  \ref{item:vss-pairs-normalization}, trivially).
\end{theorem}

This is a direct restatement, in our K-theoretic framework, of Joyce's
construction \cite[Theorem 5.7]{Joyce2021} of semistable invariants in
homology.

\begin{proof}
  If $\fM_\alpha^{\sst}(\tau) = \emptyset$, we define
  $\sz_\alpha^{\Fr}(\tau) \coloneqq 0$. In general,
  $\fM_\alpha^{\sst}(\tau) \subset \fM_\alpha^{\Fr,\pl}$, so if
  $\alpha_i$ appears in the decompositions in
  \eqref{eq:semistable-invariants-pairs-formula-naive} then
  $\fM_{\alpha_i}^{\sst}(\tau) \subset \fM_{\alpha_i}^{\Fr,\pl}$ as
  well, because $\cat{A}^{\Fr}$ is closed under direct summands. Thus
  the combinatorial
  Lemma~\ref{lem:semistable-invariants-combinatorics} (cf. the
  brute-force approach of \cite[\S 9.3]{Joyce2021}) and geometric
  wall-crossing Theorem~\ref{thm:semistable-invariants-WCF} below can
  be used to conclude that the elements $\sz_\alpha^{\Fr}(\tau)$ exist
  uniquely and are independent of $\Fr$.

  We verify \ref{item:vss-no-strictly-semistables}. If terms with $n >
  1$ exist in the sum in
  \eqref{eq:semistable-invariants-pairs-formula-naive}, then a choice
  of $[E_i] \in \fM_{\alpha_i}^\sst(\tau)$ for $i = 1, \ldots, n$
  gives $[E_1 \oplus \cdots \oplus E_n] \in \fM_\alpha^\sst(\tau)$,
  which is strictly $\tau$-semistable. So
  \eqref{eq:semistable-invariants-pairs-formula-naive} becomes
  \[ \tilde\sZ_\alpha^{\Fr}(\tilde\tau) = \fr(\alpha) \sz_{\alpha}^{\Fr}(\tau). \]
  Apply $(\Pi_\alpha^\pl)_*$ to both sides. Since $\Pi_\alpha^\pl
  \circ \pi^\pl_{\fM^{\Fr}_\alpha} = \pi_{\fM^{\Fr,\pl}_\alpha}$, and
  its restriction $\pi_{\fM^{\Fr,\pl}_\alpha}\colon
  \tilde\fM^{\Fr,\sst}_{\alpha,1}(\tilde\tau) \to
  \fM^\sst_\alpha(\tau)$ is a $\bP^{\fr(\alpha)-1}$-bundle by the
  description of $\tilde\tau$ in
  Definition~\ref{def:stability-condition}, we get
  \[ \chi\left(\tilde\fM_{\alpha,1}^{\Fr,\sst}(\tilde\tau), \cO^\vir \otimes \wedge_{-1}^\bullet(\bL_{\pi_{\fM_\alpha^{\Fr}}}) \otimes (\pi_{\fM^{\Fr,\pl}_\alpha})^*(-)\right) = \fr(\alpha) \chi\left(\fM_\alpha^\sst(\tau), \cO^\vir \otimes -\right) \]
  by the projection formula and the virtual projective bundle formula
  (Lemma~\ref{lem:projective-bundle-formula}). Note that
  $\bL_{\pi_{\fM_\alpha^{\Fr}}}$ is a vector bundle, so
  Lemma~\ref{lem:k-theoretic-chern-class} allows us to replace
  $c_{\text{rank}}^K$ by $\wedge_{-1}^\bullet(-)^\vee$. We are done
  because the right hand side is $\fr(\alpha) \sZ_\alpha(\tau)$ by
  definition.
  
  We verify \ref{item:vss-isomorphic-moduli}. If $\fM_\alpha^{\sst}$
  is equal for $\tau$ and $\tau'$, then the same is true for
  $\tilde\fM_{\alpha,1}^{\Fr,\sst}$ for $\tilde\tau$ and
  $\tilde\tau'$, and therefore also for $\tilde\sZ_\alpha^{\Fr}$.
  Hence the same is true of $\sz_\alpha$, and therefore of
  $\sZ_\alpha$.
\end{proof}

\subsubsection{}

Let $A$ be a monoid and $L = \bigoplus_{\alpha \in A} L_\alpha$ be an
$A$-graded Lie algebra over $\bQ$ with Lie bracket $[-, -]\colon
L_\alpha \otimes L_\beta \to L_{\alpha+\beta}$ for $\alpha, \beta \in
A$. Given two homomorphisms $\fr_1, \fr_2\colon A \to \bZ$, extend the
Lie bracket to $L \oplus \bQ \xi^{(1)} \oplus \bQ \xi^{(2)}$ by
$[\xi^{(i)}, \xi^{(j)}] = 0$ for all $i, j$ and
\[ [\xi^{(i)}, \phi] \coloneqq -[\phi, \xi^{(i)}] \coloneqq \fr_i(\alpha) \phi \; \text{ for } \phi \in L_\alpha. \]
It is easy to check that this makes $L \oplus \bQ \xi^{(1)} \oplus \bQ
\xi^{(2)}$ into a Lie algebra. We assume there is some homomorphism
$\tau\colon A \to \bZ$ such that the sums
\eqref{eq:semistable-invariants-relation} and
\eqref{eq:semistable-invariants-abstract-pairs-formula-naive} are
finite, and that there is a ``rank'' function $r(\alpha)$ as in
\ref{it:stability-rank-function}.

\begin{lemma} \label{lem:semistable-invariants-combinatorics}
  Suppose that, for $i = 1, 2$ and every $\alpha \in A$, there are
  elements $\tilde x^{(i)}_\alpha \in L_\alpha$ such that
  \begin{equation} \label{eq:semistable-invariants-relation}
    \left[\tilde x^{(1)}_\alpha, \xi^{(2)}\right] + \left[\xi^{(1)}, \tilde x^{(2)}_\alpha\right] - \sum_{\substack{\alpha_1+\alpha_2=\alpha\\\forall j: \tau(\alpha_j)=\tau(\alpha)}} \left[\tilde x^{(1)}_{\alpha_1}, \tilde x^{(2)}_{\alpha_2}\right] = 0, \qquad \forall \alpha \in A,
  \end{equation}
  holds in $L \oplus \bQ \xi_1 \oplus \bQ \xi_2$. Then the elements
  $x^{(i)}_\alpha \in L_\alpha$ uniquely defined by the formulas
  \begin{equation} \label{eq:semistable-invariants-abstract-pairs-formula-naive}
    \tilde x_\alpha^{(i)} = -\sum_{\substack{n>0\\\alpha_1 + \cdots + \alpha_n = \alpha\\\forall j: \tau(\alpha_j)=\tau(\alpha)}} \frac{1}{n!} \left[x^{(i)}_{\alpha_n}, \left[\cdots \left[ x^{(i)}_{\alpha_2}, [x^{(i)}_{\alpha_1}, \xi^{(i)}]\right] \cdots \right]\right]
  \end{equation}
  are independent of $i$, namely $x^{(1)}_\alpha = x^{(2)}_\alpha$
  for all $\alpha \in A$.
\end{lemma}

\begin{proof}
  We induct on $r(\alpha)$. If $r(\alpha) = 1$, then there are no
  splittings $\alpha = \sum_i \alpha_i$ with $\tau(\alpha_i) =
  \tau(\alpha)$ for all $i$, so
  \eqref{eq:semistable-invariants-relation} becomes $[\xi^{(1)},
    \tilde x_\alpha^{(1)}] = [\xi^{(2)}, \tilde x_\alpha^{(2)}]$ and
  \eqref{eq:semistable-invariants-abstract-pairs-formula-naive}
  becomes $\tilde x_\alpha^{(i)} = [x_\alpha^{(i)}, \xi^{(i)}]$, so
  the base case holds. By the induction hypothesis,
  \begin{equation} \label{eq:semistable-invariants-inductive}
    \tilde x^{(i)}_\alpha = -[x_{\alpha}^{(i)}, \xi^{(i)}] - \sum_{\substack{n>1\\\alpha_1 + \cdots + \alpha_n = \alpha\\\forall j: \tau(\alpha_j)=\tau(\alpha)}} \frac{1}{n!} \left[x^{(1)}_{\alpha_n}, \left[\cdots \left[ x^{(1)}_{\alpha_2}, [x^{(1)}_{\alpha_1}, \xi^{(i)}]\right] \cdots \right]\right]
  \end{equation}
  and the sum lives in $L_\alpha$. Since $\fr(\alpha)$ is invertible,
  this uniquely defines $x_\alpha^{(i)} \in L_\alpha$. To show
  $x_\alpha^{(1)} = x_\alpha^{(2)}$, plug
  \eqref{eq:semistable-invariants-inductive} into
  \eqref{eq:semistable-invariants-relation} to get
  \begin{equation} \label{eq:semistable-invariants-comparison}
    \fr_1(\alpha) \fr_2(\alpha) \left(x_\alpha^{(2)} - x_\alpha^{(1)}\right) - \sum_{\substack{n>1\\\alpha_1+\cdots+\alpha_n=\alpha\\\forall j:\tau(\alpha_j)=\tau(\alpha)}} \!\!C_{\alpha_1,\ldots,\alpha_n} = 0,
  \end{equation}
  where the first two terms are $n=1$ case of the sum
  \eqref{eq:semistable-invariants-inductive}, and
  $C_{\alpha_1,\ldots,\alpha_n}$ is
  \[ \sum_{m=0}^n \bigg[\frac{1}{m!} \Big[x_{\alpha_m}^{(1)}, \big[\cdots[x_{\alpha_2}^{(1)}, [x_{\alpha_1}^{(1)}, \xi^{(1)}]]\cdots\big]\Big], \frac{1}{(n-m)!} \Big[x_{\alpha_n}^{(1)}, \big[\cdots[x^{(1)}_{\alpha_{m+2}}, [x_{\alpha_{m+1}}^{(1)}, \xi^{(2)}]]\cdots\big]\Big]\bigg]. \]
  To complete the inductive step, it remains to show that the sum in
  \eqref{eq:semistable-invariants-comparison} vanishes. In the
  completion of the universal enveloping algebra with respect to the
  grading, consider the operator $\ad_x(-) \coloneqq [x, -]$ where $x
  \coloneqq \sum_{\beta : \tau(\beta) = \tau(\alpha)} x_\beta^{(1)}$.
  Then
  \[ \sum_{\substack{n\ge 0\\\alpha_1+\cdots+\alpha_n=\alpha\\\forall j: \tau(\alpha_j)=\tau(\alpha)}} C_{\alpha_1,\ldots,\alpha_n} = \alpha\text{-weight piece of } \left[e^{\ad_x} \xi^{(1)}, e^{\ad_x} \xi^{(2)}\right]. \]
  But a standard combinatorial result in Lie theory says $e^{\ad_u} v
  = e^u v e^{-u}$ for any $u, v$, so
  \[ \left[e^{\ad_x} \xi^{(1)}, e^{\ad_x} \xi^{(2)}\right] = [e^x \xi^{(1)} e^{-x}, e^x \xi^{(2)} e^{-x}] = e^x [\xi^{(1)}, \xi^{(2)}] e^{-x} = 0. \qedhere \]
\end{proof}

\subsubsection{}

\begin{theorem} \label{thm:semistable-invariants-WCF}
  Let $\Fr_1, \Fr_2$ be two framing functors with
  $\fM_\alpha^\sst(\tau) \subset \fM_\alpha^{\Fr_1,\pl} \cap
  \fM_\alpha^{\Fr_2,\pl}$. Then
  \begin{equation} \label{eq:semistable-invariants-WCF}
    \fr_2(\alpha) \tilde \sZ_\alpha^{\Fr_1}(\tilde\tau) - \fr_1(\alpha) \tilde \sZ_\alpha^{\Fr_2}(\tilde\tau) + \sum_{\substack{\alpha_1+\alpha_2=\alpha\\\forall j: \tau(\alpha_j)=\tau(\alpha)}} \left[\tilde \sZ_{\alpha_1}^{\Fr_1}(\tilde\tau), \tilde \sZ_{\alpha_2}^{\Fr_2}(\tilde\tau)\right] = 0.
  \end{equation}
\end{theorem}

This is the K-theoretic version of \cite[Corollary 9.10]{Joyce2021},
and we will give essentially the same geometric, wall-crossing proof.
This will occupy the remainder of this subsection. From now on, the
class $\alpha$ is fixed once and for all.

\subsubsection{}
\label{sec:semistable-invariants-master-space}

Joyce considers the abelian category of objects $E \in \cat{A}$,
vector spaces $V_1, V_2, V_3$, and morphisms
\begin{equation} \label{eq:master-space-quiver}
  \begin{tikzcd}[row sep=tiny, column sep=huge]
    & V_1 \ar{r}{\rho_1} & \Fr_1(E) \\
    V_3 \ar{ur}{\rho_3} \ar{dr}[swap]{\rho_4} \\
    & V_2 \ar{r}{\rho_2} & \Fr_2(E).
  \end{tikzcd}
\end{equation}
Objects have class $(\beta, \vec d)$ where $\beta$ is the class of
$E$ and $\vec d = (\dim V_1, \dim V_2, \dim V_3)$. Joyce defines the
stability condition
\[ \vec\tau(\beta, \vec d) \coloneqq \begin{cases} \left(\tau(\beta), (\epsilon d_1 + \epsilon d_2 + d_3) / r(\beta)\right) & \beta \neq 0 \\ (\infty, (\epsilon d_1 + \epsilon d_2 + d_3) / (d_1 + d_2 + d_3)) & \beta = 0 \end{cases} \]
for a choice of $0 < \epsilon < \frac{1}{2r(\alpha)}$. The upper bound
ensures that there are no strictly $\vec\tau$-semistable objects for
$(\alpha, \vec 1)$ where $\vec 1 \coloneqq (1, 1, 1)$.

Let $\bM = \bigsqcup_{\beta,\vec d} \bM_{\beta,\vec d}$ denote the
moduli stack of objects \eqref{eq:master-space-quiver} with $E \in
\cat{A}^{\Fr_1} \cap \cat{A}^{\Fr_2}$. Clearly the $\sT$-action lifts
from $\fM$ to $\bM$. Let
\[ \pi_{\fM_\beta^{\Fr_1 \cap \Fr_2}}\colon \bM_{\beta,\vec d} \to \fM_\beta^{\Fr_1} \cap \fM_\beta^{\Fr_2}, \qquad \pi_{\fM^{\Fr_1 \cap \Fr_2,\pl}}\colon \bM_{\beta,\vec d}^\pl \to \fM_\beta^{\Fr_1,\pl} \cap \fM_\beta^{\Fr_2,\pl} \]
be the forgetful maps. Let $\cV_i$ and $\cFr_j(\cE)$ denote the
universal bundles for $V_i$ and $\Fr_j(E)$ respectively.

\subsubsection{}

\begin{proposition}[{\cite[Propositions 9.5, 9.6]{Joyce2021}}] \label{prop:semistable-invariants-master-space}
  $M \coloneqq \bM_{\alpha,\vec 1}^{\sst}(\vec\tau)$ has a
  $\bC^\times$-action given by scaling $\rho_4$ with weight denoted
  $z$. Its fixed locus is the disjoint union of the following pieces.
  \begin{enumerate}
  \item Let $Z_{\rho_4=0} \coloneqq \{\rho_4=0\} \subset M$, with
    virtual normal bundle $z \cV_3^\vee \otimes \cV_2$. By
    $\vec\tau$-stability, $\rho_1, \rho_2, \rho_3 \neq 0$. The
    forgetful map
    \[ \pi_{\rho_4=0}\colon Z_{\rho_4=0} \to \fM_{\alpha,1}^{\Fr_1,\sst}(\tilde\tau), \]
    which remembers only $\rho_1\colon V_1 \to \Fr_1(E)$, is a
    $\bP^{\fr_2(\alpha)-1}$-bundle.
    
  \item Let $Z_{\rho_3=0} \coloneqq \{\rho_3=0\} \subset M$, with
    virtual normal bundle $z^{-1} \cV_3^\vee \otimes \cV_1$. By
    $\vec\tau$-stability, $\rho_1, \rho_2, \rho_4 \neq 0$. The
    forgetful map
    \[ \pi_{\rho_3=0}\colon Z_{\rho_3=0} \to \fM_{\alpha,1}^{\Fr_2,\sst}(\tilde\tau), \]
    which remembers only $\rho_2\colon V_2 \to \Fr_2(E)$, is a
    $\bP^{\fr_1(\alpha)-1}$-bundle.

  \item For each splitting $\alpha = \alpha_1 + \alpha_2$ with
    $\tau(\alpha_1) = \tau(\alpha_2)$, let
    \[ \iota_{\alpha_1,\alpha_2}\colon Z_{\alpha_1,\alpha_2} \coloneqq \{E = E_1 \oplus E_2, \; \rho_i\colon V_i \to \Fr_i(E_i) \subset \Fr_i(E) \text{ for } i = 1, 2\} \hookrightarrow M, \]
    where $\bC^\times$ scales $V_3$, $V_1$, and $E_1$ with weight $z$.
    By $\vec\tau$-stability, $\rho_1, \ldots, \rho_4 \neq 0$. The
    forgetful map
    \[ \pi_{\alpha_1,\alpha_2}\colon Z_{\alpha_1,\alpha_2} \to \fM_{\alpha_1,1}^{\Fr_1,\sst}(\tilde\tau) \times \fM_{\alpha_2,1}^{\Fr_2,\sst}(\tilde\tau), \]
    which remembers only $\rho_i\colon V_i \to \Fr_i(E_i)$ for $i = 1,
    2$, is an isomorphism. Under this isomorphism, the virtual normal
    bundle of $Z_{\alpha_1,\alpha_2} \subset M$ is
    \[ z^{-1} \cV_1^\vee \otimes \cFr_1(\cE_2) + z \cV_2^\vee \otimes \cFr_2(\cE_1) - (\pi^\pl_{\fM_{\alpha_1}^{\Fr_1}} \times \pi^\pl_{\fM_{\alpha_2}^{\Fr_2}})^*\left(z^{-1} \cE_{\alpha_1,\alpha_2}^\bullet + z \sigma^*\cE_{\alpha_2, \alpha_1}^\bullet\right) \]
    where $\cFr_i(\cE_j)$ is the universal bundle of $\Fr_i(E_j)$, and
    $\cE_{\alpha_i,\alpha_j}^\bullet$ are the bilinear elements
    associated to the obstruction theory on $\fM$
    (Remark~\ref{rem:bilinear-complex-from-POT}).
  \end{enumerate}
\end{proposition}
All morphisms of stacks $f\colon \fX \to \fY$ in this proposition are
compatible with virtual structure sheaves, i.e. $\cO^\vir_{\fX} =
f^*\cO^\vir_{\fY}$.

Our convention is to write $(\bC^\times \times \sT)$-equivariant
sheaves on $\bC^\times$-fixed loci as a product of an explicit
$\bC^\times$-weight and a $\sT$-equivariant sheaf. Obvious pullbacks
will be omitted.

\subsubsection{}

The standard geometric wall-crossing strategy may now be executed
using Proposition~\ref{prop:semistable-invariants-master-space}: write
the $(\bC^\times \times \sT)$-equivariant localization formula on $M$,
with an appropriately-chosen integrand, and then apply the K-theoretic
residue map $\rho_K$ (Definition~\ref{def:k-theoretic-residue-map}) to
get the desired formula \eqref{eq:semistable-invariants-WCF}. Here we
are using assumptions \ref{it:auxiliary-proper-stable-locus} and
\ref{it:auxiliary-obstruction-theories}. Let $\cO^\vir_M$ be the
virtual structure sheaf on $M$.

\subsubsection{}
\label{sec:semistable-invariants-master-space-integrand}

The integrand of interest on $M$ is the K-theoretic Chern class
$c_{\text{rank}}^K(\cG)$ where
\[ \cG \coloneqq \bL_{\pi_{\fM^{\Fr_1 \cap \Fr_2, \pl}_\alpha}}^\vee - (\cV_3^\vee \otimes \cV_1) \otimes (\cV_3^\vee \otimes \cV_2). \]
\begin{enumerate}
\item On $Z_{\rho_4=0}$, the first term in $\cG$ splits as
  \[ \bL_{\pi_{\fM^{\Fr_1 \cap \Fr_2, \pl}_\alpha}}^\vee\Big|_{Z_{\rho_4=0}} = z\cV_3^\vee \otimes \cV_2 + \bL_{\pi_{\rho_4=0}}^\vee + \pi_{\rho_4=0}^* \bL_{\pi_{\fM_\alpha^{\Fr_1}}}^\vee. \]
  The second term in $\cG$ becomes $z\cV_3^\vee \otimes \cV_2$
  because the line bundle $\cV_3^\vee \otimes \cV_1$ carries the
  non-zero section $\rho_3$ and is therefore
  $\bC^\times$-equivariantly trivial. Hence
  \[ c_{\text{rank}}^K(\cG)\Big|_{Z_{\rho_4=0}} = \wedge_{-1}^\bullet(\bL_{\pi_{\rho_4=0}}) \otimes \pi_{\rho_4=0}^*c_{\text{rank}}^K(\bL_{\pi_{\fM_\alpha^{\Fr_1}}}^\vee), \]
  using Lemma~\ref{lem:k-theoretic-chern-class} to replace
  $c_{\text{rank}}^K$ with $\wedge_{-1}^\bullet$ for the vector bundle
  $\bL_{\pi_{\rho_4=0}}$.
\item On $Z_{\rho_3=0}$, the same calculation yields
  \[ c_{\text{rank}}^K(\cG)\Big|_{Z_{\rho_3=0}} = \wedge_{-1}^\bullet(\bL_{\pi_{\rho_3=0}}) \otimes \pi_{\rho_3=0}^*c_{\text{rank}}^K(\bL_{\pi_{\fM_\alpha^{\Fr_2}}}^\vee). \]
\item On $Z_{\alpha_1,\alpha_2}$, under the isomorphism
  $\pi_{\alpha_1,\alpha_2}$, the first term in $\cG$ restricts to
  \begin{align*}
    \Big(\cV_3^\vee\otimes \cV_1 + \cV_3^\vee \otimes \cV_2
    &+ \cV_1^\vee \otimes \cFr_1(\cE_1) + z^{-1} \cV_1^\vee \otimes \cFr_1(\cE_2) \\
    &+ z\cV_2^\vee \otimes \cFr_2(\cE_1) + \cV_2^\vee \otimes \cFr_2(\cE_2)\Big) - \sum_{i=1}^3 \cV_i^\vee \otimes \cV_i
  \end{align*}
  by writing $\bM_{\alpha,\vec 1}$ as a vector bundle like in
  \S\ref{sec:semistable-invariants-master-space}. The second term in
  $\cG$ becomes $\bC^\times$-equivariantly trivial because the line
  bundles $\cV_3^\vee \otimes \cV_1$ and $\cV_3^\vee \otimes \cV_2$
  carry the non-zero sections $\rho_3$ and $\rho_4$ respectively.
  Identifying $\cV_3^\vee \otimes \cV_1 = \cO = \cV_i^\vee \otimes
  \cV_i$, the result is that
  \[ c_{\text{rank}}^K(\cG)\Big|_{Z_{\alpha_1,\alpha_2}} = \se\left(z^{-1} \cV_1^\vee \otimes \cFr_1(\cE_2) + z \cV_2^\vee \otimes \cFr_2(\cE_1)\right)\otimes c_{\text{rank}}^K(\bL_{\pi_{\fM_{\alpha_1}^{\Fr_1}}}) \otimes c_{\text{rank}}^K(\bL_{\pi_{\fM_{\alpha_2}^{\Fr_2}}}). \]
\end{enumerate}

\subsubsection{}

Putting together
Proposition~\ref{prop:semistable-invariants-master-space} and the
calculations in
\S\ref{sec:semistable-invariants-master-space-integrand}, by
$(\bC^\times \times \sT)$-equivariant localization, the projection
formula, and the virtual projective bundle formula
(Lemma~\ref{lem:projective-bundle-formula}),
\begin{equation} \label{eq:semistable-invariants-master-space-localization}
  \begin{aligned}
    &\chi\left(M, \cO_M^{\vir} \otimes c_{\text{rank}}^K(\cG) \otimes (\pi^\pl_{\fM_\alpha})^*(-)\right) \\
    &= \fr_2(\alpha) \chi\left(\tilde\fM^{\Fr_1,\sst}_{\alpha,1}(\tilde\tau), \frac{\tilde\cO^{\vir}}{1 - z^{-1} \cV_3 \otimes \cV_2^\vee} \otimes (\pi^\pl_{\fM_\alpha})^*(-)\right) \\
    &\quad+ \fr_1(\alpha) \chi\left(\tilde\fM^{\Fr_2,\sst}_{\alpha,1}(\tilde\tau), \frac{\tilde\cO^{\vir}}{1 - z \cV_3 \otimes \cV_1^\vee} \otimes (\pi^\pl_{\fM_\alpha})^*(-)\right) \\
    &\quad+ \!\!\!\!\!\!\sum_{\substack{\alpha=\alpha_1+\alpha_2\\\forall j:\,\tau(\alpha_j)=\tau(\alpha)}} \!\!\!\!\chi\left(\tilde\fM_{\alpha_1,1}^{\Fr_1,\sst}(\tilde\tau) \times \tilde\fM_{\alpha_2,1}^{\Fr_2,\sst}(\tilde\tau), \left(\tilde\cO^{\vir} \boxtimes \tilde\cO^{\vir}\right) \otimes \tilde\Theta_{\alpha_1,\alpha_2}^\bullet(z) \otimes \iota_{\alpha_1,\alpha_2}^* (\pi^\pl_{\fM_\alpha})^*(-) \right) 
  \end{aligned}
\end{equation}
where $\tilde\Theta_{\alpha_1,\alpha_2}^\bullet(z) \coloneqq
(\pi^\pl_{\fM_{\alpha_1}^{\Fr_1}} \times
\pi^\pl_{\fM_{\alpha_2}^{\Fr_2}})^*
\Theta_{\alpha_1,\alpha_2}^\bullet(z)$ with
$\Theta_{\alpha_1,\alpha_2}^\bullet(z)$ from \eqref{eq:mVOA-theta}.
Note that all $\tilde\cO^\vir$ are clearly $\bC^\times$-invariant,
i.e. have no $z$-dependence.

To be clear, recall that each term of the form $\chi(X, \cF)$ on the
right hand side is purely {\it notational shorthand} for
\[ \chi\left(X^\sT, \frac{\cF\big|_{X^\sT}}{\wedge_{-1}^\bullet (\cN^\vir_{X^\sT/X})^\vee}\right). \]
This is important because $(\bC^\times \times \sT)$-equivariant
classes like $1 - z^{-1} \cV_3 \otimes \cV_2^\vee$ on $X$ do not actually
have inverses per Lemma~\ref{lem:k-theory-inverse-euler-class}; they
must be further restricted to $X^\sT$ before the lemma applies. Also,
the notation $\pi^\pl_{\fM_\alpha}$ means the appropriate forgetful
map composed with the open inclusion into $\fM_\alpha$.

\subsubsection{}

\begin{lemma}[Pole cancellation] \label{lem:master-space-pole-cancellation}
  Assumption~\ref{it:master-space-proper-stable-locus} implies that
  \[ \chi(M, \cF) \in \bk_{\sT,\loc} \otimes_{\bZ} \bk_{\bC^\times} \]
  for any $\cF \in K_{\sT \times \bC^\times}(M)$. Consequently $\rho_K
  \chi(M, \cF) = 0$.
\end{lemma}

\begin{proof}
  This is a geometric observation from \cite[Proposition
    3.2]{Arbesfeld2021}. A priori, $\chi(M, \cF) \in \bk_{\sT \times
    \bC^\times,\loc}$ may have poles at $w = 1$ for $(\sT \times
  \bC^\times)$-weights $w$ with non-trivial $\bC^\times$-component.
  But properness of $M^{\sT_w}$ means $\sT_w$-equivariant localization
  may be used to compute the right hand side of
  \[ \chi(M, \cF)\Big|_{w=1} = \chi\left(M, \cF\Big|_{w=1}\right). \]
  The result is a well-defined element of $\bk_{\sT_w,\loc}$. In
  particular $\chi(M, \cF)$ has no pole at $w=1$ for any such $w$.
\end{proof}

\subsubsection{}

Apply the residue map $\rho_K$ to both sides of
\eqref{eq:semistable-invariants-master-space-localization}. The left
hand side vanishes by Lemma~\ref{lem:master-space-pole-cancellation}.
The first two terms on the right hand side become
\[ \fr_2(\alpha) \tilde\sZ^{\Fr_1}_\alpha(\tilde\tau) - \fr_1(\alpha) \tilde\sZ^{\Fr_2}_\alpha(\tilde\tau) \]
because $\cV_3 \otimes \cV_i^\vee$, for $i = 1, 2$, are line bundles.
Finally, using the commutative diagram
\[ \begin{tikzcd}[column sep=huge]
  \tilde\fM_{\alpha_1,1}^{\Fr_1,\sst}(\tilde\tau) \times \tilde\fM_{\alpha_2,1}^{\Fr_2,\sst}(\tilde\tau) \ar[hookrightarrow]{r}{\iota_{\alpha_1,\alpha_2}} \ar{d}[swap]{\pi^\pl_{\fM_{\alpha_1}} \times \pi^\pl_{\fM_{\alpha_2}}} & M \ar{d}{\pi^\pl_{\fM_\alpha}} \\
  \fM_{\alpha_1} \times \fM_{\alpha_2} \ar{r}{\Phi_{\alpha_1,\alpha_2}} & \fM_\alpha
\end{tikzcd} \]
and \eqref{eq:universal-invariants-tensor-product} for exterior tensor
product, the third term becomes
\[ \rho_K \left(Y(\tilde\sZ^{\Fr_1}_{\alpha_1}(\tilde\tau), z)\tilde\sZ^{\Fr_2}_{\alpha_2}(\tilde\tau)\right) = \left[\tilde\sZ^{\Fr_1}_{\alpha_1}(\tilde\tau), \tilde\sZ^{\Fr_2}_{\alpha_2}(\tilde\tau)\right]  \]
where the equality is the definition of the Lie bracket. Note that the
stacky morphism $\Phi_{\alpha_1,\alpha_2}\colon \fM_{\alpha_1} \times
\fM_{\alpha_2} \to \fM_\alpha$ has non-trivial
$\bC^\times$-equivariance, namely the scaling automorphism of the
first factor $\fM_{\alpha_1}$ has weight $z$. This is the source of
the operator $z^{\deg_1}$ in the definition
\eqref{eq:monoidal-stack-vertex-product} of the vertex product $Y(-,
z)$, in which $\Phi$ denoted the {\it non-$\bC^\times$-equivariant}
version of the map.

This concludes the proof of Theorem~\ref{thm:semistable-invariants-WCF}. \qed

\subsubsection{}

\begin{remark} \label{rem:symmetrized-framework}
  If $\fM$ has a {\it symmetric} obstruction theory, it is common and
  productive to replace every virtual structure sheaf $\cO^\vir$ and
  Euler class $\se(-)$ with their symmetrized versions
  \[ \hat\cO^\vir \coloneqq \cO^\vir \otimes \cK_{\vir}^{\frac{1}{2}}, \qquad \hat\se(-) \coloneqq \se(-) \otimes \det(-)^{\frac{1}{2}} \]
  whenever the square roots exist; see e.g. various flavors of
  K-theoretic Donaldson--Thomas theory \cite{Nekrasov2016,
    Thomas2020}. This replacement must occur not only throughout this
  section but also in the vertex algebra construction of
  \S\ref{sec:mVOA-on-K-homology}, in particular \eqref{eq:mVOA-theta}. 

  In future work \cite{Kuhn2025}, we will explain why all the main
  results of this paper, and more generally Joyce's wall-crossing
  machine, continue to hold in this symmetrized setting --- more
  precisely, where $\cat{A}$ is an equivariantly 3-Calabi--Yau
  category. The results of \cite{Liu2023} and \cite{Kuhn2023} may be
  viewed as special cases. The main technical difficulty is the
  construction of a {\it symmetrized pullback} of symmetric
  obstruction theories, in order to satisfy the
  assumption~\ref{it:auxiliary-obstruction-theories}.

  In principle, $\cO^\vir$ can be twisted by {\it any} line bundle as
  long as Euler classes and related quantities are modified
  appropriately. But twisting by $\cK_{\vir}^{1/2}$ is often
  preferred, since rigidity arguments such as in \cite[\S
    8]{Nekrasov2016} do not work as well for other twists.
\end{remark}

\appendix

\section{Appendix: Residue maps}
\label{sec:residue-maps}

\subsubsection{}

The passage from vertex algebra to Lie algebra in
\S\ref{sec:vertex-to-lie-algebra} is controlled by the residue map
$\rho_K$ of Definition~\ref{def:k-theoretic-residue-map}. In this
appendix, we define residue maps associated to a given
torus-equivariant cohomology theory, mildly generalizing the
definition in \cite{Metzler2002}, and then show the following.

\begin{proposition} \label{prop:k-theoretic-residue-map}
  $\rho_K$ is the unique residue map for the K-theory of
  Deligne--Mumford stacks.
\end{proposition}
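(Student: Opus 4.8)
The plan is to establish two things: that $\rhoK$ satisfies the defining properties of a residue map (existence) and that those properties pin it down uniquely (uniqueness); I expect the former to be a verification and the latter to be the real content. For existence, $\sT$-invariance is exactly Lemma~\ref{lem:residue-map-k-theory-T-invariance}, vanishing on $z$-regular classes (Laurent polynomials in $z$, for which $f_+ = f_-$) is immediate from Definition~\ref{def:k-theoretic-residue-map}, and the simple-pole normalization $\rhoK\!\left(1/(1-tz)\right) = 1$ is the direct computation already performed inside the proof of Lemma~\ref{lem:residue-map-K-theory-of-diagonal-expansion}. The one property requiring an argument is compatibility with pushforward along proper Deligne--Mumford stacks --- the feature which, together with vanishing on $z$-regular classes, makes left-hand sides vanish in applications such as \S\ref{sec:semistable-invariants}. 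For this I would invoke K-theoretic virtual $\bC^\times$-localization on DM stacks: the $\bC^\times$-fixed substack of a proper DM stack is again a proper DM stack, and comparing the fixed-point expansion of a proper pushforward with the pushforward itself (a genuine, hence $z$-regular, equivariant class) yields the required identity.

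For uniqueness, let $\rho$ be an arbitrary residue map. By $\bk$-linearity and naturality under pullback it is controlled by its restriction to the K-theory of a point, i.e.\ by a $\bk$-linear map $\rho\colon \bk\lseries*{1-z}_\sT \to \hat\bk$, and since $\rho$ and $\rhoK$ are both continuous for the $I$-adic topology on the target (the topology through which $\rhoK$ was extended off $\bk[(1-z)^\pm]_\sT$ in Definition~\ref{def:k-theoretic-residue-map}), it is enough to compare them on a rational function $f \in \bk[(1-z)^\pm]_\sT$. Following the proof of Lemma~\ref{lem:residue-map-K-theory-of-diagonal-expansion}, I would pass to an $N$-fold cover of $\bC^\times\times\sT$ with $N\gg 0$, on which $f$ has a partial-fraction expansion into a $z$-regular part plus terms $s_{ij}/(1-s_iz)^j$; by $\bk$-linearity, the vanishing axiom (which handles the $z$-regular part), and $\sT$-invariance (which identifies $\rho\!\left(1/(1-s_iz)^j\right)$ with $\rho\!\left(1/(1-z)^j\right)$), uniqueness reduces to showing that $\rho\!\left(1/(1-z)^n\right)$ is forced for every $n\ge 1$. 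The case $n=1$ is the normalization. For $n>1$ I would extract relations from $\bC^\times$-localization on $\bP^{n-1} = \bP(\bC\oplus\bC_z\oplus\cdots\oplus\bC_{z^{n-1}})$: its $n$ fixed points carry Euler classes that are products of factors $1-z^{\pm m}$, so applying $\rho$ to the localization formula for a suitably chosen proper pushforward $\chi(\bP^{n-1},\gamma)$ --- which $\rho$ annihilates, being $z$-regular --- and reducing each fixed-point contribution via the partial-fraction step produces a linear system in the unknowns $\rho\!\left(1/(1-z)^m\right)$, $1\le m\le n$, that is upper-triangular with invertible leading coefficient and so determines $\rho\!\left(1/(1-z)^n\right)$ recursively. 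Since $\rhoK$ satisfies the same recursion from the same base case, the two agree on all generators and hence everywhere. This is the multiplicative analogue of Metzler's uniqueness argument for ordinary cohomology \cite{Metzler2002}, the formal group law only moving the relevant ``residue degree'' from $z^{-1}$ to $z^0$.

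The hardest point will be the recursive step: choosing $\gamma$ and organizing the $\bP^{n-1}$-localization identities (or, perhaps more cleanly, iterated $\bP^1$'s) so that the resulting system genuinely triangulates the $\rho\!\left(1/(1-z)^n\right)$ rather than collapsing to a tautology. Underneath everything lies a single geometric input --- used both to verify the pushforward property of $\rhoK$ and to obtain the uniqueness relations --- namely that $\bC^\times$-equivariant K-theoretic integration over a \emph{proper Deligne--Mumford} stack yields a $z$-regular class; this is the K-theoretic virtual localization theorem, and is the only place the DM hypothesis enters. One last subtlety, flagged right after Definition~\ref{def:k-theoretic-residue-map}, is that there is no residue map on the larger ring $\bk\lseries*{1-z^i}_\sT$, so the argument must stay on the $i=\pm 1$ domain; the non-uniqueness of multiplicative expansions there is exactly why $\sT$-invariance has to be imposed as an axiom of a residue map rather than being automatic.
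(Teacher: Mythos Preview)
Your proposal rests on a different set of axioms for ``residue map'' than the paper actually uses, and this mismatch makes the uniqueness argument break down. In the appendix the only torus is the $\bC^\times$ carrying the variable $z$; a residue map is just a $\bk$-module map $\rho\colon\bk_{\loc}\to\bk$ with (i) $\rho|_\bk=0$ and (ii) $\rho\big(e(z\otimes\cL)^{-1}\big)=1$ for \emph{every} $\cL\in\Pic(X)$ on \emph{every} space $X$ in the class. There is no $\sT$-invariance axiom and no ``compatibility with pushforward'' axiom. Your reduction to the single family $\rho\big(1/(1-z)^n\big)$ via ``$\sT$-invariance'' is therefore illegitimate: the invariance in Lemma~\ref{lem:residue-map-k-theory-T-invariance} shifts $z$ by characters of an auxiliary torus $\sT$, not by roots of unity, so it cannot collapse $\rho\big(1/(1-\gamma z)^m\big)$ for $\gamma$ a nontrivial root of unity onto $\rho\big(1/(1-z)^m\big)$. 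Those values at roots of unity are precisely the unknowns that must be pinned down.

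Your proposed source of relations --- $\bC^\times$-localization on the scheme $\bP^{n-1}$ together with axiom (i) --- cannot suffice: the paper explicitly observes that constraints extracted from schemes are satisfied by both $\rhoK$ and $\rhoK^{\text{naive}}$ (see the remark after \eqref{eq:residue-map-K-theory-scheme-constraint}), so schemes alone never yield uniqueness. The DM hypothesis enters not through localization/properness but through axiom (ii) applied to \emph{torsion} line bundles, which exist only on genuine DM stacks. Concretely, the paper takes $X=\bP(n,n,\ldots,n)$ with its order-$n$ line bundle $\cL$; expanding $e(z\otimes\cL)^{-1}$ produces poles at all $n$-th roots of unity, and the normalization axiom then forces \eqref{eq:residue-map-K-theory-stack-constraint}. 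Uniqueness follows from a Vandermonde argument in the $\gamma_n^{-ja}$ (Lemma~\ref{lem:residue-map-K-theory-uniqueness}), determining each $\rho\big(1/(1-\gamma z)^{k+1}\big)$ by induction on $k$. So the missing idea is: use the full strength of axiom (ii) on a DM stack with a torsion line bundle to access the poles at nontrivial roots of unity, rather than trying to manufacture relations from scheme-level localization.
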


\subsubsection{}

Let $\sT = (\bC^\times)^r$ be a torus. Let $E_{\sT}(-)$ be a $\sT$-equivariant
(complex-oriented) cohomology theory for a given class of spaces (e.g.
schemes, DM stacks, or Artin stacks), such that $E_{\sT}(-)$ is
free and split over the non-equivariant cohomology theory $E(-)$
\cite{May1996}. In particular, this means:
\begin{itemize}
\item the base ring $\bk_{\sT} \coloneqq E_{\sT}(*)$ is
  an algebra over the non-equivariant base ring $\bk \coloneqq E(*)$;
\item there is a notion of $\sT$-equivariant Euler
  class\footnote{the restriction of the $\sT$-equivariant Thom
  class to the zero section} $e(-)$ of $\sT$-equivariant line
  bundles;
\item setting $E_{\sT}(-)_{\loc} \coloneqq \bk_{\loc}
  \otimes_{\bk} E_{\sT}(-)$ for the localized base ring
  \[ \bk_{\sT,\loc} \coloneqq \bk_{\sT}[e(V)^{-1} : 1 \neq V \in \Pic_{\sT}(*)], \]
  there is an equivariant localization isomorphism
  $E_{\sT}(X^{\sT})_{\loc} \xrightarrow{\sim}
  E_{\sT}(X)_{\loc}$;
\item if $\sT$ acts trivially on $X$, then there is a
  natural isomorphism
  \[ E_{\sT}(X) = E(X) \otimes_{\bk} \bk_{\sT}, \]
  and $e(\cL)^{-1}$ exists in $E_{\sT}(X)_{\loc}$ for any $\cL
  \in \Pic_{\sT}(X)$ with non-trivial $\sT$-weight.
\end{itemize}
Examples of such $E$ include equivariant cohomology and equivariant
K-theory. 

\subsubsection{}

\begin{definition} \label{def:residue-map}
  A {\it $\sT$-equivariant residue map} for $E$ is a
  $\bk_{\sT,\loc}$-module homomorphism
  \[ \rho\colon \bk_{\sT \times \bC^\times,\loc} \to \bk_{\sT,\loc}, \]
  satisfying the properties:
  \begin{enumerate}
  \item (zero on non-localized classes) if $\iota\colon \bk_{\sT
    \times \bC^\times} \to \bk_{\sT \times \bC^\times,\loc}$ is the
    natural map, then $\rho \circ \iota = 0$;
  \item (normalization) if $X$ has trivial $(\sT \times
    \bC^\times)$-action, then
    \[ \rho(e(z \otimes \cL)^{-1}) = 1 \]
    for any $z \otimes \cL \in \Pic_{\sT \times \bC^\times}(X) =
    \Pic_{\bC^\times}(*) \otimes_{\bZ} \Pic_{\sT}(X)$ of
    $\bC^\times$-weight $1$. (To be clear, throughout, $z$ denotes the
    $\bC^\times$-representation of weight $1$.)
  \end{enumerate}
  Here, and throughout, we continue to use $\rho$ to denote the
  induced maps
  \[ \bk_{\sT \times \bC^\times,\loc} \otimes_{\bk} E(X^{\sT \times \bC^\times}) \xrightarrow{\rho \otimes \id} \bk_{\sT,\loc} \otimes_{\bk} E(X^{\sT \times \bC^\times}), \]
  which, using the equivariant localization isomorphisms, is
  equivalent to a map
  \[ \rho\colon E_{\sT \times \bC^\times}(X)_{\loc} \to E_{\sT}(X^{\bC^\times})_{\loc}. \]
\end{definition}

\subsubsection{}

For the rest of this appendix, $\sT$ will be trivial, i.e. we consider
the $\bC^\times$-equivariant cohomology theory $E_{\bC^\times}(-)$,
and we simply refer to $\rho$ as a residue map. We proceed with the
proof of Proposition~\ref{prop:k-theoretic-residue-map} in this
setting, although, with more care, the same arguments show that
$\rho_K$ is also the unique $\sT$-equivariant residue map for the
K-theory of DM stacks.

\subsubsection{}

\begin{example} \label{ex:cohomology-residue-map}
  In cohomology, $e(-)$ is the ordinary Euler class and
  \begin{equation} \label{eq:localized-ordinary-cohomology}
    \bk_{\bC^\times,\loc} = \bk_{\bC^\times}[u^{-1}], \qquad u \coloneqq e(z).
  \end{equation}
  When $X = \bP^N$, let $h \coloneqq e(\cO(1)) \in H^*(\bP^N)$ be the
  hyperplane class, so that
  \begin{equation} \label{eq:euler-class-PN-ordinary-cohomology}
    e\left(z \otimes \cO_{\bP^N}(1)\right)^{-1} = \frac{1}{u + h} = \frac{1}{u} + \sum_{k=1}^N \frac{h^k}{u^{k+1}} \in H^*_{\bC^\times}(\bP^N)_{\loc}.
  \end{equation}
  Since $N > 0$ was arbitrary and $\{h^k : 0 \le k \le N\}$ are linearly
  independent, the normalization condition for a residue map $\rho$
  therefore requires $\rho(u^{-1}) = 1$ and $\rho(u^{<-1}) = 0$. Along
  with \eqref{eq:localized-ordinary-cohomology}, this uniquely
  specifies the only possible cohomological residue map
  \[ \rho_{\text{Coh}}(f) \coloneqq \Res_{u=0}(f(u) \, du). \]
\end{example}

\subsubsection{}

Slightly more subtle is the case of $\bC^\times$-equivariant K-theory,
where $e(-) \coloneqq \wedge^\bullet_{-1}(-)^\vee$ and
\[ \bk_{\bC^\times,\loc} = \bk_{\bC^\times}\left[\frac{1}{1 - z^i} : i \in \bZ \setminus \{0\}\right] = \bk_{\bC^\times}' \oplus \bigoplus_\gamma \bigoplus_{m > 0} \frac{1}{(1 - \gamma z)^m} \bk_{\bC^\times}' \]
where the sum is over roots of unity $\gamma$ and $\bk_{\bC^\times}'
\supset \bk_{\bC^\times}$ is an extension to include roots of unity.
The analogue of \eqref{eq:euler-class-PN-ordinary-cohomology} is the
expansion
\[ \se(z \otimes \cO(1))^{-1} = \frac{1}{1 - z^{-1}} + \sum_{k=1}^N \frac{z^{-k} (\cO(-1) - 1)^k}{(1 - z^{-1})^{k+1}} \in K_{\bC^\times}(\bP^N)_{\loc}, \]
see e.g. \eqref{eq:k-theoretic-inverse-chern-root}. The normalization
condition here yields only the constraint
\begin{equation} \label{eq:residue-map-K-theory-scheme-constraint}
  \rho\left(\frac{1}{1 - z^{-1}}\right) = 1, \qquad \rho\left(\frac{z^{-k}}{(1-z^{-1})^{k+1}}\right) = 0 \qquad \forall k > 0
\end{equation}
for a residue map $\rho$, with no constraints on poles in $z$ at
non-trivial roots of unity. This will always be the case for the
K-theory of a {\it scheme} $X$, where the action of all line bundles
$\cL$ on $K(X)$ is unipotent and therefore the expansion of $e(z
\otimes \cL)^{-1}$ has poles only at $z=1$.

\subsubsection{}

\begin{remark}
  For the K-theory of schemes, the freedom to choose how a residue map
  $\rho$ behaves at non-trivial roots of unity was already observed in
  \cite{Metzler2002}, where both
  \begin{equation} \label{eq:residue-map-K-theory-naive}
    \rho_K^{\text{naive}}(f) \coloneqq \Res_{z=1}(f(z) \, z^{-1} dz)
  \end{equation}
  and $\rho_K$ are observed to satisfy
  \eqref{eq:residue-map-K-theory-scheme-constraint}. The former is the
  unique such residue map which is zero at all other poles at
  non-trivial roots of unity, see
  Lemma~\ref{lem:residue-map-K-theory-uniqueness} and
  Remark~\ref{rem:residue-map-K-theory-comparison}.
\end{remark}

\subsubsection{}

When $\fX$ is instead a DM stack, the action of line bundles $\cL$ on
$K(\fX)$ is in general only {\it quasi}-unipotent. This leads to poles
appearing at non-trivial roots of unity. Let $X = \bP(n, n, n, \ldots,
n)$ be an $N$-dimensional weighted projective space, so that
\[ K(X) = \bZ[\cL^\pm]/\inner*{(\cL^n - 1)^{N+1}} \]
by excision long exact sequence or otherwise. The correct
K-theoretic expansion is
\begin{align*}
  e(z \otimes \cL)^{-1}
  &= \frac{1 + z^{-1}\cL^{-1} + \cdots + z^{-n+1}\cL^{-n+1}}{1 - z^{-n} \cL^{-n}} \\
  &= \sum_{k=0}^N \frac{z^{-nk} (\cL^{-n} - 1)^k}{(1 - z^{-n})^{k+1}} (1 + z^{-1}\cL^{-1} + \cdots + z^{-n+1}\cL^{-n+1}) \in K_{\bC^\times}(X)_{\loc}.
\end{align*}
This now has poles at all $n$-th roots of unity. As $n, N > 0$ are
arbitrary, any residue map $\rho$ must satisfy
\begin{equation} \label{eq:residue-map-K-theory-stack-constraint}
  \rho\left(\frac{z^{-nk-a}}{(1 - z^{-n})^{k+1}}\right) = \begin{cases} 1 & k = a = 0 \\ 0 & \text{otherwise} \end{cases}
\end{equation}
for $0 \le a < n$.

\subsubsection{}

\begin{lemma} \label{lem:residue-map-K-theory-uniqueness}
  The unique residue map satisfying
  \eqref{eq:residue-map-K-theory-stack-constraint} is $\rho_K$.
\end{lemma}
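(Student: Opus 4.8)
The plan is to prove the lemma in two steps: \emph{first}, that $\rhoK$ is a residue map satisfying \eqref{eq:residue-map-K-theory-stack-constraint}; \emph{second}, that \eqref{eq:residue-map-K-theory-stack-constraint} together with the defining axioms of a residue map already determines $\rho$ uniquely on all of $\bk_{\loc}$. The first step is a direct computation with the two series expansions, and the second is an elementary partial-fraction-type reduction.

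For the first step I would fix $n \ge 1$, $k \ge 0$ and $0 \le a < n$ and expand $f \coloneqq z^{nk+a}/(1-z^n)^{k+1}$. Around $z=0$ one gets $f_+ = z^{nk+a}\sum_{j \ge 0}\binom{j+k}{k}z^{nj}$, whose $z^0$-coefficient is $\binom{0}{0}=1$ when $k=a=0$ and $0$ otherwise, since $nk+a+nj>0$ unless $k=a=j=0$. Around $z=\infty$, writing $1-z^n = -z^n(1-z^{-n})$ gives $f_- = (-1)^{k+1}z^{a-n}\sum_{j \ge 0}\binom{j+k}{k}z^{-nj}$, whose $z^0$-coefficient vanishes because $a-n-nj<0$ for all $j \ge 0$ when $0 \le a < n$. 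Hence $\rhoK(f)=[z^0](f_+-f_-)$ is $1$ when $k=a=0$ and $0$ otherwise, which is exactly \eqref{eq:residue-map-K-theory-stack-constraint}. That $\rhoK$ is a residue map follows in the same vein: a monomial $z^\ell$ is its own expansion around both $0$ and $\infty$, so $\rhoK$ kills $\bk$ (axiom (i)); and for quasi-unipotent $\cL$ with $\cL^n = 1+u$, $u$ nilpotent, expanding $e(z\otimes\cL)^{-1}=(1-z\cL)^{-1}$ in powers of $u$ as $\sum_{k}\frac{z^{nk}u^k}{(1-z^n)^{k+1}}(1+z\cL+\cdots+z^{n-1}\cL^{n-1})$ and applying the formula just proved extracts precisely the constant term $1$ (axiom (ii)).

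For the second step I would first observe that every element of $\bk_{\loc}$ can be written as $p(z)/(1-z^N)^M$ for a single $N \ge 1$, $M \ge 0$ and $p \in \bk = \bZ[z^\pm]$: take $N$ a common multiple of the exponents $i$ occurring in the chosen denominators, since $i \mid N$ implies $(1-z^i) \mid (1-z^N)$ in $\bZ[z]$. Expanding $p$ as a $\bZ$-linear combination of monomials reduces us to the functions $z^\ell/(1-z^N)^M$; and repeatedly using the identity $z^N \cdot (1-z^N)^{-M} = (1-z^N)^{-M} - (1-z^N)^{-(M-1)}$ to shift the numerator exponent by multiples of $N$, at the cost of terms of strictly lower pole order, one rewrites each such function---by downward induction on $M$---as a $\bZ$-linear combination of monomials $z^\ell$ and of the ``canonical'' functions $z^{nk+a}/(1-z^n)^{k+1}$ with $0 \le a < n$ appearing in \eqref{eq:residue-map-K-theory-stack-constraint}. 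Consequently these functions, together with the monomials, span $\bk_{\loc}$ over $\bZ$. A residue map vanishes on the monomials (they are non-localized classes, axiom (i)) and has prescribed values on the canonical functions by \eqref{eq:residue-map-K-theory-stack-constraint}, hence is uniquely determined; by the first step $\rhoK$ is such a map, so it is the unique one.

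The main obstacle is the bookkeeping in the second step: checking that the index-shifting reduction terminates and actually lands in the span of monomials and canonical functions. This is routine---essentially the manipulation that produced \eqref{eq:residue-map-K-theory-stack-constraint} from weighted projective spaces, run in reverse---but one must handle both directions of the shift and the induction on the pole order $M$ carefully. Note that only spanning is needed, not linear independence, so any relations among the canonical functions are harmless: consistency of the prescribed values is automatic from the existence of $\rhoK$ established in the first step.
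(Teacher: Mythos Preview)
Your proof is correct. The direct verification that $\rhoK$ satisfies \eqref{eq:residue-map-K-theory-stack-constraint} via the expansions $f_+$ and $f_-$ is precisely what the paper leaves as ``straightforward'', and your check of the residue-map axioms (i) and (ii) for $\rhoK$ is more than the paper's proof of the lemma itself provides.

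For uniqueness, however, you and the paper take genuinely different routes. The paper extends scalars to adjoin roots of unity, uses the partial-fraction decomposition $\bk_{\loc} \subset \bk' \oplus \bigoplus_{\gamma,m} \bk'/(1-\gamma z)^m$, and then observes that for fixed $n$ and $k$ the constraints \eqref{eq:residue-map-K-theory-stack-constraint} (as $a$ ranges over $0 \le a < n$) give a linear system in the unknowns $\rho\bigl((1-z\gamma_n^j)^{-k-1}\bigr)$ whose coefficient matrix is Vandermonde, hence invertible; induction on the pole order then finishes. Your argument instead stays over $\bZ$ throughout: you show directly that the canonical functions $z^{nk+a}/(1-z^n)^{k+1}$ together with Laurent monomials already $\bZ$-span $\bk_{\loc}$, via the shift identity $z^N(1-z^N)^{-M}=(1-z^N)^{-M}-(1-z^N)^{-(M-1)}$ and induction on $M$. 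This is more elementary---no roots of unity, no determinant computation, no implicit extension of $\rho$ to the larger ring $\bk'_{\loc}$---while the paper's route ties in naturally with the basis in which the explicit values $\rhoK\bigl(1/(1-\gamma z)^m\bigr)=1$ are recorded in Remark~\ref{rem:residue-map-K-theory-comparison}.
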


This lemma concludes the proof of
Proposition~\ref{prop:k-theoretic-residue-map}.

\begin{proof}
  Let $\rho$ be a residue map and $\gamma_n$ be an $n$-th root of
  unity. The leading-order poles in
  \begin{align*}
    \frac{z^{-nk} z^{-a}}{(1 - z^{-n})^{k+1}}
    &= \sum_{i=0}^k (-1)^i \binom{k}{i} \frac{z^{-a}}{(1 - z^{-n})^{k+1-i}} \\
    &= \frac{1}{n^{k+1}} \sum_{j=0}^{n-1} \frac{\gamma_n^{-ja}}{(1 - z^{-1} \gamma_n^j)^{k+1}} + O\left(\frac{1}{(1 - z^{-n})^k}\right),
  \end{align*}
  for $0 \le a < n$, have coefficients (proportional to)
  $\gamma_n^{-ja}$ which form a Vandermonde matrix of non-zero
  determinant. Applying $\rho$ to both sides, by induction on the
  order of poles the only unknowns are
  $\rho((1 - z^{-1} \gamma_n^i)^{-k-1})$ for $0 \le i < n$, which are
  therefore uniquely determined.

  It is straightforward to verify from the definition that $\rho_K$
  satisfies \eqref{eq:residue-map-K-theory-stack-constraint}.
\end{proof}

\subsubsection{}

\begin{remark} \label{rem:residue-map-K-theory-comparison}
  The connection between $\rho_K$ and $\rho_K^{\text{naive}}$ from
  \eqref{eq:residue-map-K-theory-naive} is as follows. By definition,
  \[ \rho_K(f) = -(\Res_{z=0} + \Res_{z=\infty})\left(f(z) \, z^{-1} dz\right). \]
  A function $f \in \bk_{\bC^\times,\loc}$ has poles only at $0$, $\infty$, and
  roots of unity, so by the residue theorem
  \[ \rho_K(f) = \rho_K^{\text{naive}} + \sum_{\gamma \neq 1} \Res_{z=\gamma} \left(f(z) \, z^{-1} dz\right) \]
  where the sum is over all non-trivial roots of unity $\gamma$.
  Explicitly, for all $m > 0$ and roots of unity $\gamma$,
  \[ \rho_K\left(\frac{1}{(1 - z^{-1}\gamma)^m}\right) = 1, \qquad \rho_K^{\text{naive}}\left(\frac{1}{(1 - z^{-1}\gamma)^m}\right) = \begin{cases} 1 & \gamma = 1 \\ 0 & \text{otherwise.} \end{cases} \]
\end{remark}





\phantomsection
\addcontentsline{toc}{section}{References}

\begin{small}
\bibliographystyle{alpha}
\bibliography{mVOA}
\end{small}

\end{document}